\renewcommand{\P}{\mathbbm{P}}
\newcommand{\Exp}{\mathbbm{E}}
\newcommand{\E}{\mathcal{E}}
\DeclareMathOperator{\eqlaw}{\stackrel{\mathcal{L}}{=}}
\newcommand{\R}{\mathbbm{R}}
\newcommand{\N}{\mathbbm{N}}
\newcommand{\F}{\mathcal{F}}
\newcommand{\der}[2]{ \frac{\text{d} #1}{\text{d} #2} }  
\newcommand{\M}{\mathcal{M}}
\newcommand{\C}{\mathcal{C}}
\newcommand{\smax}{\lVert \sigma \rVert_{\infty}}
\newcommand{\ls}{\lambda_{*}}
\newcommand{\Jmax}{\lVert \bar{J} \rVert_{\infty}}
\newcommand{\bmax}{\lVert b \rVert_{\infty}}
\theoremstyle{plain}
\newtheorem{theorem}{Theorem}
\newtheorem{proposition}[theorem]{Proposition}
\newtheorem{lemma}[theorem]{Lemma}
\newtheorem{definition}[theorem]{Definition}
\newtheorem{remark}{\it Remark\/}
\begin{document}
\title{Large deviations for randomly connected neural networks: I. Spatially extended systems}
\date{\today}
\author{Tanguy Cabana \and Jonathan Touboul}
\maketitle

\begin{abstract}
In a series of two papers, we investigate the large deviations and asymptotic behavior of stochastic models of brain neural networks with random interaction coefficients. In this first paper, we take into account the spatial structure of the brain and consider (i) the presence of interaction delays that depend on the distance between cells and (ii) Gaussian random interaction amplitude whose mean and variance depend on the neurons positions and scale as the inverse of the network size. We show that the empirical measure satisfies a large-deviation principle with good rate function reaching its minimum at a unique spatially extended probability measure. This result implies averaged convergence of the empirical measure and propagation of chaos. The limit is characterized through complex non-Markovian implicit equation in which the network interaction term is replaced by a non-local Gaussian process whose statistics depend on the solution over the whole neural field.
\end{abstract}

\medskip

\tableofcontents

\medskip

\section{General Introduction}
In this two-part paper, we study the asymptotic behavior of neuronal networks with heterogeneous interconnections inspired by neurobiology. The model we consider describes the stochastic dynamics of $N$ interconnected neurons in random environment. Each neuron is described by a real${}^{{1}}$\footnote{${}^{{1}}$ The results generalize to multi-dimensional variables with obvious modifications.} state variable (e.g., its voltage) and the network satisfies the following stochastic differential equation (SDE):
\begin{equation}\label{eq:Standard equation}
	dX^{i,N}_t=\left(f(r_i,t,X^{i,N}_t) + \sum_{j=1}^{N} J_{ij} b(X^{i,N}_t, X^{j,N}_{t-\tau(r_i,r_j)}) \right)\,dt+\lambda(r) dW^{i}_t,
\end{equation}
	where $X^{i,N}_{t}$ is the state of neuron $i\in \{1,\cdots,N\}$ at time $t$ and:
\begin{itemize}
	\item the map $f$ denotes each neuron's intrinsic dynamics, which may depend on time  (e.g., to model non constant stimuli) and on the neuron identity through an heterogeneity parameter $r_{i}\in \R^{d}$. This parameter allows considering neurons with distinct properties, here associated to their location, and more general in the companion paper~\cite{cabana-touboul:16}.
	\item the terms $ (J_{{ij}}b(x^{i},x^{j}))$ represent the impact of the neuron $j$ with state $x^{j}$ onto the neuron $i$ with state $x^i$. The amplitude of these interaction coefficients $(J_{ij})_{(i,j) \in \{1,\cdots,N\}^{2}}$, called \emph{synaptic efficiencies}, are modeled as independent real Gaussian variables with mean and variance scaling as the inverse of the network size. Moreover, the time necessary to propagate and transmit the information is taken into account in the delay map $\tau$ depending on cells locations;
	\item the stochastic fluctuations are driven by independent standard real-valued Brownian motions at the level of each cell $(W^i_t)_{t\geq 0}$, with positive diffusion coefficient depending on the cell location.
\end{itemize}
In this framework, the scaling of the variance of the coefficients $J_{ij}$ of order $1/N$ is much slower than the more classical scaling in $\mathcal{O}(1/N^{2})$~\cite{sznitman:89,den2008large,lucon2014mean}. This scaling has been shown to preserve a non-trivial contribution of the synaptic weights fluctuations in the thermodynamic limit, leading to rich behaviors depending on disorder levels. In particular, physics literature showed that, under this scaling assumption, the disorder level governs the glassy transition in spin glass systems~\cite{sherrington-kirkpatrick:75}, a chaotic transition in randomly connected neural networks~\cite{sompolinsky-crisanti-etal:88}, and a transition in the Kuramoto model whose nature is debated~\cite{daido:92,daido:00,stiller-radon:98,stiller-radon:00}. Ben Arous and Guionnet~\cite{ben-arous-guionnet:95,guionnet:97,ben-arous-guionnet:98} developed a general mathematical methodology relying on large deviations techniques to characterize the thermodynamic limit of Langevin spin glass systems with bounded spins, linear interactions $b(x,y)=y$, and random connectivity coefficients with variance $1/N$. These methods were soon adapted to biological neural networks with sigmoidal interactions $b(x,y)=S(y)$ either in discrete time ~\cite{moynot-samuelides:02,dauce-moynot-etal:01,cessac-samuelides:07,faugeras2014asymptotic}, or in continuous time with multiple populations and delays~\cite{cabana-touboul:12}. All these cases did not take into account (i) the spatial structure of the network along with biologically relevant space-dependent delay, nor (ii) interactions depending on the state of both neurons (i.e., $b$ having a non-trivial dependence in its both variables). Addressing these two issues are precisely the topic of this two-parts paper.

This first paper addresses the specific difficulties associated with the spatial extension of the network keeping $b(x,y)=S(y)$, while the companion paper~\cite{cabana-touboul:16} deals with the case of a general interaction term, and will ignore spatial dependence. Our proof follows the general methodology developed in~\cite{ben-arous-guionnet:95}; the two papers are organized in a similar fashion and, to avoid any repetition, the demonstrations that do not present additional difficulties in the second part will only appear in the first. Furthermore, treating these two independent difficulties separately notably allows to obtain results valid for all times in the present paper, while specific difficulties of the companion paper~\cite{cabana-touboul:16} will lead us to restrict our analysis to a finite time interval, but in which we will prove stronger estimates.

The biological motivation for this first part is to strengthen the link between the microscopic dynamics of equation \eqref{eq:Standard equation} with the spatiotemporal patterns of activity recorded at the surface of the cortex. These patterns are biologically relevant for their correlate with cognitive processes such as memory~\cite{funahashi:89}, visual illusions~\cite{jancke:04} or the propagation of a localized stimulus~\cite{muller:14}. Historically, an efficient - though heuristic - mathematical model was famously provided by H. Wilson and J. Cowan~\cite{wilson-cowan:72,wilson-cowan:73} in order to describe the macroscopic dynamics of neural fields and account for these spatial phenomena. They proposed an integro-differential equation involving the level of activity $u(r,t)$ of cells at location $r$: 
\begin{equation}\label{eq:WC}
	\frac{\partial u}{\partial t} = -u(r,t) + \int_{D} J(r,r') S(u(r',t))\,dr' + I(r,t)
\end{equation}
where $I(r,t)$ represents the input at location $r$, $J(r,r')$ is the averaged interconnection weight from neurons at location $r'$ onto neurons at location $r$ and the non-decreasing map $S$ associates to a level of activity $u$ the resulting firing rate. While this equation has been very successful in reproducing a number of biological phenomena - in particular working memory~\cite{kilpatrick:13} and visual hallucination patterns~\cite{ermentrout:79,bressloff-cowan-etal:01} - it lacked a rigorous derivation. Justifying the relationship between the macroscopic dynamics of neural fields and the properties of finite-sized networks is a prominent issue in computational neuroscience~\cite{bressloff:12}. It has a been the topic of numerous researches using PDE formalisms and kinetic equations~\cite{cai-tao:04,rangan-cai:07,rangan-kovacic:08} with deep applications to the visual system, moment reductions and master equations~\cite{ly-tranchina:07,bressloff:09}, but also the development of specific Markov chain models reproducing in the thermodynamics limit the dynamics of Wilson-Cowan systems~\cite{buice-cowan:07,buice-cowan:10,bressloff:09,buice2013beyond,buice2013dynamic}. Spatially extended networks with connectivity coefficients having a variance scaling as $1/N^{2}$ were investigated in~\cite{touboul2014propagation,touboul2014spatially,lucon2014mean} using coupling or compactness methods. Eventually, we refer to the general work on large deviations for interacting heterogeneous diffusions with non-random interconnections~\cite{dai1996mckean}, a landmark work with numerous applications. Unfortunately, none of these techniques can handle cases where the synaptic couplings have a variance decaying in $1/N$ that we address here. This scaling has the advantage of enriching the original Wilson-Cowan equation by making the effect of heterogeneous connections explicit.

The organization of the first part is as follows. We provide in section~\ref{sec:MathSetting} the general notations and assumptions used here and in the companion paper~\cite{cabana-touboul:16}. In section~\ref{sec:MainResults}, we summarize the main results proved in this first manuscript. Sections~\ref{sec:LDP} is dedicated to the demonstration of a weak large deviations principle for the averaged system. In section~\ref{sec:LimitIdentification}, we develop an alternative and more compact methodology compared to~\cite{ben-arous-guionnet:95,guionnet:97} for characterizing the limit under our regularity assumptions. Moreover, we combine all results to conclude on the convergence of the empirical measure and propagation of chaos property. 

\section{Mathematical setting}\label{sec:MathSetting}
In order to take into account the random spatial organization of the network, we consider that the neurons have independent locations $r_{i}\in D$ drawn according to a probability measure $\pi \in \M_1^+(D)$ representing the density of neurons over the cortex, and assumed to be absolutely continuous with respect to Lebsegue's measure. Moreover, the connectivity coefficients $J_{ij}$ are Gaussian variables, independent conditionally on the neurons location, and with laws depending on the location of both neurons${}^{1}$\footnote{${}^{1}$To fix ideas, we will consider $J_{ij}=\frac{J(r_{i},r_{j})}{N} + \frac{\sigma(r_{i},r_{j})}{\sqrt{N}} \xi_{ij}$ with $\xi_{ij}$ independent standard Gaussian random variables independent of the variables $(r_{i})_{i\in\{1\cdots N\}}$. }: 
\[J_{ij}\sim \mathcal{N}\left(\frac{J(r_{i},r_{j})}{N}, \frac{\sigma^{2}(r_{i},r_{j})}{N}\right).\]
There are thus two sources of randomness:
\begin{itemize}
\item \emph{Random environment:} the locations of neurons and connectivity coefficients constitute a frozen environment; they are random variables in a probability space $(\tilde{\Omega}, \tilde{\mathcal{F}}, \mathcal{P})$. 
\item \emph{Stochastic dynamics:} the neurons are driven by a collection of independent $(\Omega, \mathcal{F},(\mathcal{F}_t), \P)$-Brownian motions $(W^i_t)_{i\in \mathbbm{N}}$ independent of the environment.
\end{itemize}

Hence, the dynamics of the $X^{i,N}$ depends both on the random environment (i.e., the realization of locations $\mathbf{r}$ and weights $J$) and noise (the realization of the Brownian motions). We will denote by $\E$ the expectation over the environment (i.e. with respect to the probability distribution $\mathcal{P}$) and introduce the shorthand notation $\mathcal{P}_J$ and $\mathcal{E}_J$ the probability and expectation over the synaptic weights matrix $J$ only (that is, $\mathcal{P}$ and $\E$ conditioned over the positions $\mathbf{r}$). 

We make the following regularity assumptions:
\begin{enumerate}
	\item The function $f: D \times \R \times \R \mapsto \R$ is $K_f$-Lipschitz continuous in its three variables.
	\item The map $b:\R \times \R \mapsto \R$ is bounded ($\bmax:=\sup_{x,y \in {\R \times \R}} \vert b(x,y) \vert < \infty$) and $K_b$-Lipschitz-continuous in all variables.
  	\item The mean and variance maps of the weights $J$ and $\sigma$ are bounded and, respectively, $K_J$ and $K_{\sigma}$-Lipschitz continuous in their second variable. We denote
  	\[
  	\lVert J \rVert_{\infty}=\underset{(r,r')\in D^2}{\sup} |J(r,r')|, \quad
  	\lVert \sigma \rVert_{\infty}=\underset{(r,r')\in D^2}{\sup} \sigma(r,r').
  	\]
    \item $\tau: D^2 \to \R^+$ is Lipschitz continuous, with constant $K_{\tau}$. It is in particular bounded, by compactness of $D$. We denote by $\bar{\tau}$ its supremum.
    \item The diffusion coefficient $\lambda: D \to \R_+^*$ is a $K_{\lambda}$ Lipschitz continuous and uniformly lower-bounded: $\forall r \in D$, $\lambda(r)\geq \ls >0$.
\end{enumerate}

Throughout the paper, $\M_1^+(\Sigma)$ will denote the set of Borel probability measure on the Polish space $\Sigma$. Let $\C_{\tau}:=\C([-\bar{\tau},0], \R)$, and $\mu_0:D \to \M_1^+\big( \C_{\tau} \big)$ taking value in the space of probability distributions with bounded second moment, and which is continuous in the variable $r \in D$ in the sense that there exists a random mapping $\bar{x}^0: D \to \C_{\tau}$ on $\big(\Omega, \mathcal{F}, \P\big)$ and $C_0 >0$ such that:
\begin{equation}\label{hyp:spaceRegInitCond}
\forall r, r' \in D, \; \mathcal{L}(\bar{x}^0(r))=\mu_0(r), \quad \Exp \Big[ \underset{-\bar{\tau} \leq s \leq 0}{\sup} \big|\bar{x}^0_s(r) -\bar{x}^0_s(r') \big|^2 \Big] \leq C_0 \Vert r-r' \Vert_{\R^d}^2,
\end{equation}
where $\Vert \cdot \Vert_{\R^d}$ denote the Euclidean norm on $\R^d$. We denote by $\Vert \cdot \Vert_{\tau, \infty}$ the supremum norm on $\C_{\tau}$. We consider that the network's initial conditions are independent realizations of $\mu_0$:
\begin{equation}\label{eq:ICNet}
	\text{Law of } (x_t)_{t\in[-\bar{\tau},0]} = \bigotimes_{i=1}^N \mu_0(r_i).
\end{equation}

Since equation~\eqref{eq:Standard equation} is a standard delayed stochastic differential equation with regular parameters, is is classically well-posed~\cite{da-prato:92,mao:08}:

\begin{proposition}\label{pro:ExistenceUniquenessNetwork}
	For each $\mathbf{r}\in D^N$, and $J \in \R^{N\times N}$ and $T>0$, there exists a unique weak solution to the system~\eqref{eq:Standard equation} defined on $[-\bar{\tau}, T]$ with initial condition~\eqref{eq:ICNet}. Moreover, this solution is square integrable.
\end{proposition}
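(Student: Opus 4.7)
The plan is to construct a strong solution via Picard iteration on the path space $\C([-\bar{\tau}, T], \R^N)$, which in particular yields the weak existence and uniqueness claimed here. I fix $T > 0$, the environment $\mathbf{r} \in D^N$ and the matrix $J \in \R^{N \times N}$, and endow the path space with the uniform norm. Assumptions (A1), (A2) and (A4) imply that the drift
\[
F^i(t, \mathbf{y}) = f(r_i, t, y^i_t) + \sum_{j=1}^N J_{ij}\, b\bigl(y^i_t, y^j_{t - \tau(r_i, r_j)}\bigr)
\]
is globally Lipschitz in $\mathbf{y}$ with respect to the sup norm (with a constant depending on $N$, $K_f$, $K_b$ and $\lVert J \rVert_\infty$), and satisfies a linear-growth bound, by the linear growth of $f$ and the boundedness $|b| \leq \bmax$. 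The diffusion coefficient $\lambda(r_i)$ is deterministic, which will simplify the stochastic-integral estimates.

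First, I set $X^{i,(0)}_t = \bar{x}^0_t(r_i)$ for $t \in [-\bar{\tau}, 0]$ and $X^{i,(0)}_t = \bar{x}^0_0(r_i)$ for $t \in (0, T]$, then recursively define
\[
X^{i,(n+1)}_t = \bar{x}^0_0(r_i) + \int_0^t F^i\bigl(s, X^{(n)}\bigr)\, ds + \lambda(r_i) W^i_t, \qquad t \in [0, T],
\]
with $X^{i,(n+1)} = \bar{x}^0(r_i)$ on $[-\bar{\tau}, 0]$. Applying Cauchy-Schwarz on the drift integral (the stochastic part drops out of the differences because $\lambda$ is deterministic) together with the Lipschitz property of $F^i$, I get
\[
u_{n+1}(t) \leq C(N, T)\int_0^t u_n(s)\, ds, \qquad u_n(t) := \max_i \Exp \sup_{-\bar{\tau} \leq s \leq t} \lvert X^{i,(n+1)}_s - X^{i,(n)}_s \rvert^2,
\]
and iteratively $u_n(T) \leq u_0(T)\, (C(N,T) T)^n/n!$, with $u_0(T) < \infty$ by the $L^2$ hypothesis on $\mu_0$ and linear growth of $F^i$. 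Hence $X^{(n)}$ is Cauchy in $L^2\bigl(\Omega;\, \C([-\bar{\tau}, T], \R^N)\bigr)$, and its limit $X^{i,N}$ satisfies the SDE by passing to the limit in the integral formulation.

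Pathwise uniqueness then follows from the same Gronwall argument applied to the squared sup norm of the difference of two solutions driven by identical Brownians with the same initial data. Square integrability is obtained by an analogous estimate on $t \mapsto \Exp \sup_{-\bar{\tau} \leq s \leq t} \lvert X^{i,N}_s \rvert^2$ combined with Gronwall's inequality, using again the linear growth of the drift and the boundedness of $\lambda$ from (A5). There is no essential obstacle in this scheme: the possibly vanishing delay is absorbed by enlarging the state from $\R^N$ to the path space over $[-\bar{\tau}, T]$, and all the Lipschitz and boundedness inputs needed for the Picard iteration are precisely what assumptions (A1)--(A5) provide, consistently with the classical delayed-SDE theory invoked in the references to Mao and Da Prato-Zabczyk.
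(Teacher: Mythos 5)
Your Picard-iteration argument is correct and is essentially what the paper means when it describes equation~\eqref{eq:Standard equation} as a ``standard delayed SDE with regular parameters'' and cites Mao and Da Prato--Zabczyk: the paper gives no explicit proof, and your plan fills in exactly the details that those references supply. Two small points are worth flagging. First, the initial condition~\eqref{eq:ICNet} is a \emph{product} law $\bigotimes_{i=1}^N \mu_0(r_i)$, so you should start the iteration from $N$ \emph{independent} realizations $\bar{x}^{0,1},\ldots,\bar{x}^{0,N}$ of the mapping in~\eqref{hyp:spaceRegInitCond} (setting $X^{i,(0)} = \bar{x}^{0,i}(r_i)$), not from a single $\bar{x}^0$ evaluated at the various $r_i$, which would produce a correlated joint law. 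Second, your construction gives a strong solution with pathwise uniqueness once an initial realization and a Brownian motion are fixed; passing from that to the stated \emph{weak} existence and uniqueness (since~\eqref{eq:ICNet} only prescribes the initial law) is the standard Yamada--Watanabe step, which is worth stating explicitly, and is precisely the distinction drawn in the remark following the proposition. There is also a harmless indexing slip in the definition of $u_n$ (it uses $X^{(n+1)}-X^{(n)}$ but should be $X^{(n)}-X^{(n-1)}$ to match the stated recursion). None of these affect the validity of the approach.
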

\begin{remark}
	Note that if the initial condition was given by $(X^{i,N}_t)_{t\in[-\bar{\tau},0]}=\zeta^i$ with $\zeta^i\eqlaw \mu_0(r_i)$, we can of course prove strong existence and uniqueness of solutions.
\end{remark}

We now work with an arbitrary fixed time $T>0$ and denote by $Q^N_{\mathbf{r}}(J)$ the unique law solution of the network equations restricted to the $\sigma$-algebra $\sigma(X^{i,N}_s, 1\leq i\leq N, -\bar{\tau}\leq s\leq T)$. $Q^N_{\mathbf{r}}(J)$ is a probability measure on $\C^N$ where $\C:= \mathcal{C}\big([-\bar{\tau},T],\R\big)$ is the space of real valued continuous functions from $[-\bar{\tau},T]$ to $\R$. This measure depends on the realizations of both the connectivity matrix $J$ and the locations of neurons $\mathbf{r}$. For any $t \in [0,T]$, we will denote by $\Vert \cdot \Vert_{\infty,t}$ the supremum norm on $\C \big([-\bar{\tau},t], \R \big)$. In order to characterize the behavior of the system as the network size diverges, we will show a Large Deviations Principle (LDP) for the double-layer empirical measure $\hat{{\mu}}_{N}\in \mathcal{M}_{1}^{+}(\C\times D)$ defined as:
\begin{equation}\label{eq:EmpiricalLaw}
\hat{\mu}_N := \frac{1}{N} \sum_{i=1}^N \delta_{(X^{i,N},r_i)},
\end{equation}
where $\delta_{(x,r)} \in \M_1^+(\C \times D)$ denotes the degenerate probability measure at $(x,r) \in \C \times D$. 

We will rely on  Sanov's theorem which is central in large deviations techniques. This result states that the empirical measure of independent copies of the same law $\mu$ on a Polish space $\Sigma$ satisfies a full LDP with good rate function corresponding to the relative entropy $I(.|\mu)$ defined, for $\nu \in \M_1^+(\Sigma)$, by:
\begin{equation*}
I(\nu|\mu) :=
\begin{cases}
  \displaystyle{\int \log{ \frac{d\nu}{d\mu} } }d\nu  &  \text{if }  \nu \ll \mu,\\
  + \infty & \text{otherwise }.
\end{cases}
\end{equation*}

Because of the interaction term and the lack of symmetry due to the particular realization of the coupling coefficients, it is clear that the variables $(X^{i,N},r_{i})$ satisfying equation~\eqref{eq:Standard equation} are neither independent nor identically distributed, and one cannot rely directly on Sanov's theorem to show an LDP for the empirical measure~\eqref{eq:EmpiricalLaw}. Sanov's theorem will nevertheless apply in an averaged sense for a network with no interaction, and we will build upon this result and on a particular generalization of Varadhan's lemma~\cite{dembo-zeitouni:09,deuschel-stroock:89} to derive a weak LDP for the empirical measure of the original network~\eqref{eq:Standard equation} averaged on the environment variables. 

The law of a neuron at location $r \in D$ in the absence of interaction, denoted $P_r\in \M_{1}^{+}(\C)$, is the unique weak solution of the one-dimensional standard SDE:
\begin{equation}\label{eq:Uncoupled}
	\begin{cases}
			dX_t= f(r,t,X_t) dt + \lambda(r) dW_t\\
			(X_t)_{t \in [-\bar{\tau},0] } \eqlaw \mu_0(r).
	\end{cases}
\end{equation}
Remark that, by a direct application of Girsanov's theorem, $Q^N_{\mathbf{r}}(J)$ is absolutely continuous with respect to $P_{\mathbf{r}}:=\bigotimes_{i=1}^N P_{r_i}$, with density:
\begin{multline}\label{eq:Density}
  \der{Q^N_{\mathbf{r}}(J)}{P_{\mathbf{r}}}(\mathbf{x}) = \exp\Bigg(\sum_{i=1}^N  \int_{0}^{T}  \Big(\frac{1}{\lambda(r_i)}  \sum_{j=1}^N   J_{ij} b(x^{i}_t, x^{j}_{t-\tau(r_i,r_j)})\Big)   dW_t(x^i,r_i)\\
  - \frac{1}{2} \int_{0}^{T} \Big(\frac{1}{\lambda(r_i)} \sum_{j=1}^N J_{ij} b(x^{i}_t, x^{j}_{t-\tau(r_i,r_j)})\Big)^2 dt\Bigg),
\end{multline}
where $\mathbf x=(x^{1},\cdots, x^{N}) \in \C^{N}$ and 
\begin{equation}\label{def:W}
\forall (x,r) \in \C \times D, \; t \in [0,T], \qquad W_t(x,r):= \frac{x_t-x_0}{\lambda(r)} - \int_0^t \frac{f(r,s,x_s)}{\lambda(r)} ds.
\end{equation}
Remark that, by \eqref{eq:Uncoupled}, $\big(W_t(.,r)\big)_t$ is a $P_r$-Brownian motion. Moreover, under $P_{\mathbf{r}}$ the Brownian motions $\big(W_t(x^i,r_i), 0 \leq t \leq T \big)_{i \in \{ 1\cdots N\}}$ are independent. 

Under $P_{\mathbf{r}}$ neurons are independent but not identically distributed, while the pairs $(X^{i,N},r_i)$ are if we take into account the random character of the locations. We thus introduce the well-defined two-layer probability measure $P\in \M_1^+(\C \times D)$ of these variables, $dP(x,r):=dP_r(x) d\pi(r)$ (see Appendix~\ref{sec:Appendix.A} for details). Remark that Sanov's theorem does apply to the empirical measure~\eqref{eq:EmpiricalLaw} considered under $P^{\otimes N}$.

In order to obtain an analogous result for the full network, we construct its associated symmetric law:

\begin{lemma}\label{lemma:ContinuityOfSolutions}
Let $Q^N_{\mathbf{r}}:=\mathcal{E}_J \big( Q^N_{\mathbf{r}}(J) \big)$. Then the map
\begin{equation*}
\mathcal{Q}:
\left\{
\begin{array}{ll}
  D^N \to \mathcal{M}_1^+(\C^N)\\
  \mathbf{r} \to Q^N_{\mathbf{r}}

\end{array}
\right.
\end{equation*}
is continuous with respect to the weak topology. Moreover,
\[
dQ^N(\mathbf{x},\mathbf{r}):=dQ^N_{\mathbf{r}}(\mathbf{x})d\pi^{\otimes N}(\mathbf{r})
\]
defines a probability measure on $\mathcal{M}_1^+\big((\C \times D)^N \big)$.
\end{lemma}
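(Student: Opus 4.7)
The plan is to realize all the laws $Q^N_{\mathbf{r}}$ simultaneously on a single auxiliary probability space in a way that depends continuously on $\mathbf{r}$, thereby reducing weak continuity of $\mathcal{Q}$ to an $L^2$-continuity statement for a coupled SDE system. I fix a probability space $(\Omega,\F,\P)$ carrying: $N$ independent copies $(\bar{x}^{0,i})_{i\leq N}$ of the random mapping from~\eqref{hyp:spaceRegInitCond}, $N$ independent standard Brownian motions $(W^i)_{i\leq N}$, and an i.i.d.\ array $(\xi_{ij})_{i,j\leq N}$ of standard Gaussians, all three families mutually independent. For every $\mathbf{r}\in D^N$, set $J_{ij}(\mathbf{r}):=J(r_i,r_j)/N+\sigma(r_i,r_j)\xi_{ij}/\sqrt{N}$ and let $(X^{i,N}(\mathbf{r}))_{i\leq N}$ be the unique strong solution of~\eqref{eq:Standard equation} with these weights, noises $W^i$, diffusion coefficients $\lambda(r_i)$ and initial segments $\bar{x}^{0,i}(r_i)$; strong existence follows from Proposition~\ref{pro:ExistenceUniquenessNetwork} since the coefficients are Lipschitz. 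By construction, the $\P$-law of $(X^{i,N}(\mathbf{r}))_{i}$ coincides with $Q^N_{\mathbf{r}}=\E_J(Q^N_{\mathbf{r}}(J))$.

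Given $\mathbf{r}^n\to\mathbf{r}$ in $D^N$, I would establish
\begin{equation*}
\max_{1\leq i\leq N}\Exp\bigl[\Vert X^{i,N}(\mathbf{r}^n)-X^{i,N}(\mathbf{r})\Vert_{\infty,T}^2\bigr]\xrightarrow[n\to\infty]{}0
\end{equation*}
by a Gronwall argument. Subtracting the two SDEs, applying Burkholder--Davis--Gundy to the stochastic term (whose difference is controlled by $|\lambda(r_i^n)-\lambda(r_i)|\leq K_\lambda\Vert r_i^n-r_i\Vert_{\R^d}$) and expanding the drift differences using the Lipschitz assumptions on $f$, $J$, $\sigma$ together with $\bmax<\infty$, every contribution admits a bound of the form $C\Vert\mathbf{r}^n-\mathbf{r}\Vert^2$ plus a self-referential term $C\int_0^t\max_j\Exp\Vert X^{j,N}(\mathbf{r}^n)-X^{j,N}(\mathbf{r})\Vert_{\infty,s}^2\, ds$. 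The Cauchy--Schwarz blow-up from summing $N$ interaction terms is absorbed because $\Exp|J_{ij}(\mathbf{r})|^2=\O(1/N)$. The genuinely delicate point is the delayed interaction
\begin{equation*}
b\bigl(X^{i,N}_t(\mathbf{r}^n),X^{j,N}_{t-\tau(r_i^n,r_j^n)}(\mathbf{r}^n)\bigr)-b\bigl(X^{i,N}_t(\mathbf{r}),X^{j,N}_{t-\tau(r_i,r_j)}(\mathbf{r})\bigr),
\end{equation*}
which I would split, via Lipschitz continuity of $b$, into a pathwise piece controlled by $\Exp\Vert X^{j,N}(\mathbf{r}^n)-X^{j,N}(\mathbf{r})\Vert_{\infty,t}^2$ and a time-shift piece $\Exp|X^{j,N}_{s}(\mathbf{r})-X^{j,N}_{s'}(\mathbf{r})|^2$ with $|s-s'|\leq K_\tau\Vert\mathbf{r}^n-\mathbf{r}\Vert$. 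The latter is bounded on $[0,T]$ by the standard $\O(|s-s'|)$ $L^2$-increment estimate for SDEs with Lipschitz/bounded coefficients, and on $[-\bar\tau,0]$ by dominated convergence, using the second-moment bound on $\Vert\bar{x}^0(r)\Vert_{\tau,\infty}$ together with almost-sure continuity of the initial path.

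Weak continuity of $\mathcal{Q}$ then follows: for every bounded Lipschitz $\phi:\C^N\to\R$,
\begin{equation*}
\Bigl|\int\phi\, dQ^N_{\mathbf{r}^n}-\int\phi\, dQ^N_{\mathbf{r}}\Bigr|\leq L_\phi\sum_{i=1}^N\Exp\Vert X^{i,N}(\mathbf{r}^n)-X^{i,N}(\mathbf{r})\Vert_{\infty,T}\xrightarrow[n\to\infty]{}0,
\end{equation*}
and bounded Lipschitz functions determine weak convergence on the Polish space $\C^N$. Continuity gives Borel measurability of $\mathbf{r}\mapsto Q^N_{\mathbf{r}}$, so it is a Markov kernel from $D^N$ to $\C^N$ and the standard product construction
\begin{equation*}
Q^N(A):=\int_{D^N} Q^N_{\mathbf{r}}\bigl(\{\mathbf{x}:(\mathbf{x},\mathbf{r})\in A\}\bigr)\,d\pi^{\otimes N}(\mathbf{r})
\end{equation*}
defines a Borel probability measure on $(\C\times D)^N$, yielding the second assertion. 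The main obstacle in this plan is the delay-shift term: small position perturbations translate into time shifts of interacting trajectories, and controlling these shifts jointly with the position-induced perturbations of all other coefficients is what forces the careful splitting and the use of the initial-condition regularity~\eqref{hyp:spaceRegInitCond}.
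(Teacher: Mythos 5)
Your coupling is the right one and is essentially the paper's: fix the Brownian motions, the initial segments, and the array $(\xi_{ij})$, and let the weights $J_{ij}(\mathbf{r})=J(r_i,r_j)/N+\sigma(r_i,r_j)\xi_{ij}/\sqrt{N}$ vary continuously with $\mathbf{r}$ (the paper packages this as an abstract coupling $\tilde J^{\mathbf{r}^n}$ satisfying $\mathcal{E}_J|\tilde J^{\mathbf{r}^n}_{ij}-J^{\mathbf{r}}_{ij}|\leq CN^{-1}(\Vert r_i-r_i^n\Vert+\Vert r_j-r_j^n\Vert)$, which your explicit parameterization realizes).

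The gap is in the Gronwall step. You plan to land on an inequality of the form
\[
\max_i\Exp\Vert X^{i,N}(\mathbf{r}^n)-X^{i,N}(\mathbf{r})\Vert^2_{\infty,t}\ \leq\ C\Vert\mathbf{r}^n-\mathbf{r}\Vert^2 + C\int_0^t\max_j\Exp\Vert X^{j,N}(\mathbf{r}^n)-X^{j,N}(\mathbf{r})\Vert^2_{\infty,s}\,ds,
\]
but this does not close. The interaction Lipschitz constant is the \emph{unbounded} Gaussian $J_{ij}^{\mathbf{r}}$, which is correlated with both trajectories (they solve the SDE driven by that very $J$). To bound $\Exp\big[(J^{\mathbf{r}}_{ij})^2(b(\cdot)-b(\cdot))^2\big]$ you would have to use H\"older/Cauchy--Schwarz, which raises the moment order on the trajectory difference and destroys the self-referential structure needed for Gronwall; there is no deterministic constant $C$ such that $\Exp[(J^{\mathbf{r}}_{ij})^2 Y]\leq C\Exp[Y]$ uniformly. (The observation $\Exp|J_{ij}|^2=\O(1/N)$ is beside the point here since $N$ is fixed throughout the lemma; the obstacle is unboundedness and correlation, not the sum over $j$.) The paper's proof avoids this by applying Gronwall \emph{pathwise} after summing over $i$, yielding an almost-sure bound with the random prefactor $\exp\{C_T\sum_{i,j}|J^{\mathbf{r}}_{ij}|\}$, and only then taking expectations and splitting that exponential off via Cauchy--Schwarz (it has finite moments since the $J_{ij}$ are Gaussian). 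The same issue infects your time-shift control: the ``standard $\O(|s-s'|)$ $L^2$-increment estimate'' assumes bounded coefficients, which fails because $J_{ij}$ is unbounded. The paper instead keeps $\sup_{|a-b|\leq 2K_\tau\Vert\mathbf{r}^n-\mathbf{r}\Vert_\infty}|X^{j,N}_{\mathbf{r}}(s+a)-X^{j,N}_{\mathbf{r}}(s+b)|$ inside the (pathwise-Gronwalled) bound and sends it to zero by a.s.\ continuity of paths and monotone convergence, rather than by a quantitative increment estimate. Rewriting your Gronwall step pathwise in the spirit of the paper, and replacing the quantitative increment bound with the qualitative continuity argument, would repair the proof; the coupling itself and the weak-convergence/Markov-kernel conclusion are fine.
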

This result is proved in Appendix~\ref{sec:Appendix.A}.

As announced in the introduction, we will adopt the classical firing-rate formalism~\cite{wilson-cowan:72,wilson-cowan:73,amari:72} in this first part that $b(x,y)=S(y)$. Here, $S$ is a sigmoidal transformation assumed to be a smooth (at least continuously differentiable) increasing map tending to $0$ at $-\infty$ and to $1$ at $\infty$. 

Equation \eqref{eq:Standard equation} thus becomes:
\begin{equation}\label{eq:Network}
	dX^{i,N}_t=\left(f(r_i,t,X^{i,N}_t) + \sum_{j=1}^{N} J_{ij} S(X^{j,N}_{t-\tau(r_i,r_j)}) \right)\,dt+\lambda(r) dW^{i}_t.
\end{equation}

We now state the main results for this dynamics.

\section{Statement of the results}\label{sec:MainResults}

\begin{theorem}\label{thm:Convergence}
	$\Big(Q^N\big(\hat{\mu}_{N} \in \cdot \big) \Big)_{N \in \N^*}$ satisfies a weak LDP of speed $N$ and converges towards $\delta_{Q} \in \M_1^+\Big( \M_1^+\big(\C \times D \big) \Big)$ as $N$ goes to infinity.
\end{theorem}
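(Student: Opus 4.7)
The plan is to lift Sanov's theorem from the non-interacting reference measure $P^{\otimes N}$ to the interacting measure $Q^N$ via a Varadhan-type tilting, after first integrating out the Gaussian couplings $J$ explicitly. Using Lemma~\ref{lemma:ContinuityOfSolutions} together with $dP(x,r)=dP_r(x)\,d\pi(r)$, one has $dQ^N(\mathbf{x},\mathbf{r})=dQ^N_{\mathbf{r}}(\mathbf{x})\,d\pi^{\otimes N}(\mathbf{r})$ and $dP^{\otimes N}(\mathbf{x},\mathbf{r})=dP_{\mathbf{r}}(\mathbf{x})\,d\pi^{\otimes N}(\mathbf{r})$, so that
\begin{equation*}
\frac{dQ^N}{dP^{\otimes N}}(\mathbf{x},\mathbf{r}) \;=\; \mathcal{E}_J\!\left[\frac{dQ^N_{\mathbf{r}}(J)}{dP_{\mathbf{r}}}(\mathbf{x})\right].
\end{equation*}
Under $P^{\otimes N}$ the pairs $(X^{i,N},r_i)$ are i.i.d.\ with common law $P$, so Sanov's theorem provides a full LDP of speed $N$ for $\hat{\mu}_N$ under $P^{\otimes N}$, with good rate function $I(\cdot\,|\,P)$.

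The central technical step is to evaluate $\mathcal{E}_J$ applied to the Girsanov density~\eqref{eq:Density} explicitly. Because the exponent is affine-quadratic in the independent Gaussian variables $J_{ij}$, Fubini and the standard Gaussian exponential formula yield a closed-form expression. After completing the square one should obtain, modulo vanishing corrections, an exponential of the form $\exp\bigl(N\,\Gamma(\hat{\mu}_N)\bigr)$, where $\Gamma:\mathcal{M}_1^+(\C\times D)\to\R$ is a functional arising from the fact that, conditionally on the neurons, the Gaussian fields $t\mapsto\sum_j J_{ij}\,S(x^j_{t-\tau(r_i,r_j)})$ have mean and covariance expressible as empirical averages involving $J(r_i,\cdot)S$ and $\sigma^2(r_i,\cdot)S\cdot S$; replacing these empirical averages by integrals against $\hat{\mu}_N$ exposes the dependence on the empirical measure alone. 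The $1/N$ scaling of the mean and variance of $J_{ij}$ is precisely what makes each term of the exponent of order $N$ and produces an $\O(1)$ contribution per neuron.

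A generalized Varadhan lemma (as in the references cited above) then transfers Sanov's LDP through this tilt to give a weak LDP for $Q^N(\hat{\mu}_N\in\cdot)$ with rate function
\begin{equation*}
H(\mu) \;:=\; I(\mu\,|\,P)\;-\;\Gamma(\mu)\;+\;\text{const}.
\end{equation*}
To conclude convergence to $\delta_Q$, I would identify the unique minimizer of $H$ as the law $Q$ of the implicit non-Markovian mean-field process announced in the abstract. Uniqueness should combine the strict convexity of $I(\cdot\,|\,P)$ with a fixed-point/contraction argument on the self-consistency equation characterizing minimizers, exploiting the boundedness of $S$ and the Lipschitz regularity of $f,J,\sigma,\tau,\lambda$; this is the content of the alternative "limit identification" methodology advertised for Section~\ref{sec:LimitIdentification}. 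Together with exponential tightness of the empirical measures, the weak LDP with unique minimizer forces $Q^N(\hat{\mu}_N\in\cdot)\Rightarrow\delta_Q$. In my view, the two main obstacles are: (i)~making the identification $\mathcal{E}_J[dQ^N_{\mathbf{r}}(J)/dP_{\mathbf{r}}]\approx\exp(N\,\Gamma(\hat{\mu}_N))$ uniform enough to survive the variational passage to the limit, since the delays $\tau(r_i,r_j)$ break temporal symmetry and couple spatial and time dependencies in a non-trivial way; and (ii)~proving continuity of $\Gamma$ with respect to the weak topology on $\mathcal{M}_1^+(\C\times D)$, as required for the Varadhan step.
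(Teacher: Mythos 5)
Your roadmap matches the paper's at a high level, but the crucial middle step is not a black box one can delegate to ``a generalized Varadhan lemma'': the functional $\Gamma$ fails the hypotheses of Varadhan's lemma twice over (it is not continuous in the weak topology, and it is not bounded from above), which the paper states explicitly. Your obstacle (ii) is thus not a wrinkle but a genuine breakdown, and the proposal does not supply the resolution. The paper's device, adapted from Ben Arous--Guionnet, is a linearization: freeze the Gaussian statistics at an external $\nu$ to get the affine map $\Gamma_\nu(\mu)=\int \log \E_\gamma[\exp X^\nu(x,r)]\,d\mu(x,r)$ and the tilted i.i.d.\ law $Q_\nu$ with $dQ_\nu\propto\E_\gamma[\exp X^\nu]\,dP$. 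One then proves (Theorem~\ref{thm:Qnu}) that $H_\nu:=I(\cdot|P)-\Gamma_\nu$ equals $I(\cdot|Q_\nu)$ and is therefore a good rate function, and separately bounds $|\Gamma_{2,\nu}(\mu)-\Gamma_2(\mu)|\le C_T(1+I(\mu|P))\,d_T^V(\mu,\nu)$ using a Vaserstein metric $d_T$ built precisely so that the $\tau(r,r')$-shifted arguments are controlled by $\|r-r'\|$ — this is exactly where your obstacle (i) about delays is addressed. Only the combination of Theorem~\ref{thm:Qnu}, Lemma~\ref{lemma1}, and the H\"older splitting in Lemma~\ref{lemma3.1} produces the compact upper bound, and the rate function has no extraneous constant: $H(\mu)=I(\mu|P)-\Gamma(\mu)$ already attains minimum $0$.

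Two further corrections. First, what is used for convergence is ordinary tightness of $\hat\mu_N$ under $Q^N$ (Theorem~\ref{thm:tightness}, derived from exponential tightness of $P^{\otimes N}$ plus an entropy bound), not exponential tightness of the interacting measures; this is consistent with only a weak LDP being available — the remark after the theorem notes that a full LDP is proved only for small $T$. Second, uniqueness of the minimizer is not obtained from strict convexity of $I(\cdot|P)$ (which would not control the nonconcave $\Gamma$) but by showing the minimizer solves $\mu=Q_\mu$ and then running a Picard contraction on $\mu\mapsto Q_\mu$ in the $d_T^V$ metric, exploiting the Lipschitz regularity you mention. Once these three ingredients — compact upper bound, tightness, unique zero of $H$ — are in hand, the actual proof of Theorem~\ref{thm:Convergence} is the short standard argument: split $Q^N(\hat\mu_N\notin B(Q,\delta))\le\varepsilon+Q^N(\hat\mu_N\in B(Q,\delta)^c\cap K_\varepsilon)$, apply the upper bound on the compact set where $\inf H>0$, and send $N\to\infty$.
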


\begin{remark}
Note that, for $T < \frac{\ls^2}{2 \smax^2 \bmax^2}$, a full large deviation principle can be demonstrated, as we can readily prove exponential tightness of the averaged empirical measure (see e.g.~\cite{ben-arous-guionnet:95} and the companion paper~\cite{cabana-touboul:16}). As we are mainly interested in proving convergence of the empirical measure, a weak LDP will be sufficient in this first part, allowing showing convergence results in the absence of limitation in time. 
\end{remark}

The quantitative estimates leading to this convergence result are summarized in the following two results:
\begin{theorem}\label{thm:LDP}
  There exists a good rate function $H$ on $\M_1^+ (\C \times D)$ such that for any compact subset $K$ of $\M_1^+ (\C \times D)$,
  \[
  \limsup_{N\to\infty} \frac 1 N \log Q^N(\hat{\mu}_{N}\in K )\leq -\inf_{K} H.
  \]
\end{theorem}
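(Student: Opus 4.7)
The plan is to apply Sanov's theorem to the uncoupled reference measure $P^{\otimes N}$ and absorb the interaction as an exponential tilt via a one-sided Varadhan lemma. First, by Lemma~\ref{lemma:ContinuityOfSolutions} and the Girsanov formula \eqref{eq:Density}, the averaged Radon--Nikodym derivative satisfies
\begin{equation*}
\frac{dQ^N}{dP^{\otimes N}}(\mathbf{x},\mathbf{r}) \;=\; \frac{dQ^N_{\mathbf{r}}}{dP_{\mathbf{r}}}(\mathbf{x}) \;=\; \mathcal{E}_J\!\left[\frac{dQ^N_{\mathbf{r}}(J)}{dP_{\mathbf{r}}}(\mathbf{x})\right].
\end{equation*}
Since the $(J_{ij})$ are independent Gaussians with mean $J(r_i,r_j)/N$ and variance $\sigma^2(r_i,r_j)/N$, and since the Girsanov exponent is linear plus a negative semidefinite quadratic form in $J$ that is block-diagonal in the index $i$, the expectation can be computed in closed form as a Gaussian integral. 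The $1/N$ scaling of both the mean and variance of $J_{ij}$ is precisely calibrated so that the resulting density rewrites as $\exp\bigl(N \Gamma_N(\hat{\mu}_N)\bigr)$, where $\Gamma_N : \M_1^+(\C\times D)\to\R$ depends only on the empirical measure, with a mean-field drift contribution of the form $\iint \ldots\, d\hat{\mu}_N \otimes d\hat{\mu}_N$ coming from the means of $J$, and a quadratic-in-the-empirical-measure contribution coming from its variance field.

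Second, I would identify a weakly upper semi-continuous limit functional $\Gamma : \M_1^+(\C\times D)\to\R$ such that $\Gamma_N\to\Gamma$ uniformly on compact subsets. Sanov's theorem provides a full LDP for $\hat{\mu}_N$ under $P^{\otimes N}$ with good rate function $I(\cdot|P)$; combining this with the tilt via the upper-bound half of Varadhan's lemma (e.g.\ Lemma~4.3.6 of \cite{dembo-zeitouni:09} or Lemma~2.1.8 of \cite{deuschel-stroock:89}) yields, for every compact $K \subset \M_1^+(\C\times D)$,
\begin{equation*}
\limsup_{N\to\infty} \frac{1}{N} \log Q^N(\hat{\mu}_N \in K) \;\leq\; \sup_{\mu\in K}\bigl(\Gamma(\mu) - I(\mu|P)\bigr) \;=\; -\inf_{\mu\in K} H(\mu),
\end{equation*}
with the proposed rate $H(\mu) := I(\mu|P) - \Gamma(\mu)$. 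Goodness of $H$ then reduces to the goodness of $I(\cdot|P)$ together with an estimate of the form $\Gamma(\mu) \leq c\,I(\mu|P) + C$ for some $c<1$, which follows from the uniform boundedness of $S$, $J$, $\sigma$ in assumptions (2)--(3) and guarantees that sublevel sets of $H$ are contained in the compact sublevel sets of $I(\cdot|P)$.

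The main technical obstacle is defining $\Gamma$ and establishing its weak upper semi-continuity. The Girsanov exponent naturally produces stochastic integrals against the $P_r$-Brownian motions $W_t(\cdot,r)$ of \eqref{def:W}, which are not continuous functions of the path in the supremum topology; following the strategy of \cite{ben-arous-guionnet:95,cabana-touboul:12}, I would eliminate them by a pathwise integration by parts performed before integrating out $J$, at the price of producing an It\^o-correction term that is itself continuous in $\hat{\mu}_N$, and rely on the Lipschitz regularity of $f,b,\lambda,J,\sigma,\tau$ to transfer weak convergence of measures to convergence of the resulting functional. A subsidiary point is that the Gaussian integration over $J$ produces a determinantal prefactor whose logarithm must be checked to be of order $o(N)$, which is expected from the bounds on $S$ and $\sigma$ forcing an $O(1)$ operator norm of the quadratic form. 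Finally, the absence of an exponential tightness estimate on the whole space is precisely why only a weak LDP on compacts is proved here, in line with the remark following Theorem~\ref{thm:Convergence} that restricts the small-time regime $T<\ls^2/(2\smax^2\bmax^2)$ to obtain tightness.
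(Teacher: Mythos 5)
Your plan does not line up with what actually works here, and the sticking point is one the paper flags explicitly: the functional $\Gamma$ you want to feed into the one-sided Varadhan lemma is \emph{neither} upper semi-continuous \emph{nor} bounded from above, because it contains a stochastic integral $\int_0^T G_t^{\mu}\,dW_t(x,r)$ that is only defined $P$-a.s.\ and is not a continuous function of the path $x$ in the sup norm. You acknowledge this but propose to fix it ``following the strategy of \cite{ben-arous-guionnet:95,cabana-touboul:12}'' by a pathwise integration by parts that would remove the stochastic integral and produce a continuous functional. That is not the strategy of those references, and it has its own difficulty: the It\^o integration by parts you have in mind uses that the coordinate path $x$ has the quadratic variation of a $P$-Brownian scale, which is true $P$-a.s.\ but is certainly not true $\mu$-a.s.\ for a generic $\mu\in\M_1^+(\C\times D)$; the resulting ``$\Gamma$'' would again only be defined on $\{I(\cdot|P)<\infty\}$ and would not be (upper) semi-continuous on the whole space. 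In short, the direct Varadhan route breaks at exactly the spot where the paper says it does.

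The paper's actual proof takes a different route. One first \emph{linearizes}: for a fixed reference $\nu$, the functional $\Gamma_\nu$ (with the mean/covariance frozen at $\nu$) is linear in $\mu$, and $\exp\{N\bar\Gamma_\nu(\hat\mu_N)\}\,dP^{\otimes N}=dQ_\nu^{\otimes N}$ is a genuine product measure. One then shows that $H_\nu=I(\cdot|P)-\Gamma_\nu$ equals the relative entropy $I(\cdot|Q_\nu)$ (Theorem~\ref{thm:Qnu}), so Sanov gives a \emph{full} LDP under $Q_\nu^{\otimes N}$ with good rate $H_\nu$. The upper bound on a compact $K$ is then obtained by covering $K$ by small Vaserstein balls $B(\nu_i,\delta)$, applying H\"older's inequality inside each ball to split off the ``error'' $\bar\Gamma(\hat\mu_N)-\bar\Gamma_{\nu_i}(\hat\mu_N)$, estimating this error in Lemma~\ref{lemma3.1} by a quantity $C_q(\delta)N$ with $C_q(\delta)\to0$, and finally comparing $H_{\nu_i}$ and $H$ using the continuity estimate of Lemma~\ref{lemma1}, which bounds $|\Gamma_{\nu}(\mu)-\Gamma(\mu)|$ by $C_T(1+I(\mu|P))\,d_T^V(\mu,\nu)$. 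The entropy inequality $|\Gamma(\mu)|\le\iota I(\mu|P)+e$ with $\iota<1$, which you correctly anticipate for goodness of $H$, is also the mechanism that lets the $(1+I(\mu|P))$ factor be absorbed. None of these three ingredients (linearized rate $H_\nu=I(\cdot|Q_\nu)$, covering+H\"older error control, Lipschitz-in-$\nu$ comparison of $\Gamma_\nu$ and $\Gamma$) appears in your sketch, and without them the argument does not close.

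Two smaller points. First, the Gaussian average over $J$ is indeed computable (Lemma~\ref{lemma2}), but the paper avoids the explicit determinant you mention by re-expressing it via the auxiliary Gaussian process $G^{\hat\mu_N}$ under $\gamma$; this makes the functional dependence on $\hat\mu_N$ transparent without having to estimate log-determinants. Second, your closing remark about why only a weak LDP on compacts is proved is accurate and matches the remark following Theorem~\ref{thm:Convergence}.
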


The convergence result also relies on the tightness of the sequence of empirical measures:
\begin{theorem}\label{thm:Tightness}
  For any real number $\varepsilon>0$, there exists a compact subset $K_{\varepsilon}$ such that for any integer $N$,
  \[
  Q^{N}(\hat{\mu}_N \notin K_{\varepsilon})\leq \varepsilon.
  \]
\end{theorem}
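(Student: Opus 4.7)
The plan is to reduce tightness of the random measures $\hat{\mu}_N \in \M_1^+(\C \times D)$ to tightness of their mean measure $\bar{\mu}_N := \Exp_{Q^N}[\hat{\mu}_N]$ on $\C \times D$ itself. The elementary observation is that if, for every $\eta > 0$, one can exhibit a compact $\tilde K_\eta \subset \C \times D$ with $\sup_N \bar{\mu}_N(\tilde K_\eta^c) \leq \eta^2$, then Markov's inequality yields $Q^N(\hat{\mu}_N(\tilde K_\eta^c) > \eta) \leq \eta$; setting $\eta_n = \varepsilon/2^n$, the intersection $\bigcap_{n \geq 1}\{\mu : \mu(\tilde K_{\eta_n}^c) \leq \eta_n\}$ is a compact subset of $\M_1^+(\C \times D)$ (by Prokhorov) of $Q^N$-probability at least $1-\varepsilon$.

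By Lemma~\ref{lemma:ContinuityOfSolutions} and the symmetry of $\pi^{\otimes N}$ under permutations, $\bar{\mu}_N$ coincides with the joint law of the single pair $(X^{1,N}, r_1)$ under $Q^N$. Its $D$-marginal is $\pi$, trivially tight by compactness of $D$. Tightness of the $\C$-marginal, uniformly in $N$, will then follow from Arzel\`a--Ascoli by producing (i) a uniform second-moment bound $\sup_N \Exp_{Q^N}[\Vert X^{1,N}\Vert_{\infty,T}^2] < \infty$ and (ii) a Kolmogorov--Chentsov increment estimate $\sup_N \Exp_{Q^N}[|X^{1,N}_t - X^{1,N}_s|^4] \leq C (t-s)^2$ for $0 \leq s \leq t \leq T$. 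The regularity hypothesis \eqref{hyp:spaceRegInitCond} handles the initial interval $[-\bar{\tau}, 0]$ directly.

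Both (i) and (ii) follow from the integrated form of \eqref{eq:Network} via Gr\"onwall's lemma, Burkholder--Davis--Gundy (BDG) on the Brownian term, and Cauchy--Schwarz on the time integrals, \emph{provided} one controls the interaction drift $D^{i,N}_t := \sum_{j=1}^N J_{ij} S(X^{j,N}_{t-\tau(r_i,r_j)})$ in $L^2$ uniformly in $N$. Decomposing $J_{ij} = J(r_i,r_j)/N + \sigma(r_i,r_j) \xi_{ij}/\sqrt{N}$ with $\xi_{ij}$ i.i.d.\ standard Gaussians independent of the Brownian motions, the deterministic mean part of $D^{i,N}_t$ is bounded in absolute value by $\lVert J \rVert_\infty$ thanks to $|S| \leq 1$, while the centred Gaussian part $N^{-1/2}\sum_j \sigma(r_i,r_j) \xi_{ij} S(X^{j,N}_\cdot)$ must be handled carefully: conditionally on the trajectories the $\xi_{ij}$'s would immediately give $O(1)$ variance, but the paths themselves depend on the weights through the closed-loop SDE. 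The cancellation is extracted by Gaussian integration by parts (Stein's identity) in the $\xi_{ij}$'s combined with a Gr\"onwall bootstrap on the flow sensitivities $\partial X^{j,N}/\partial \xi_{ik}$ — which for $j \neq i$ are controlled using that $\xi_{ik}$ enters only the equation for $X^{i,N}$ and propagates through the network via weights of amplitude $1/\sqrt{N}$ — yielding $\Exp[(D^{i,N}_t)^2] \leq C$ uniformly in $N$.

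The principal technical obstacle is exactly this uniform $L^2$ control on the drift: a naive pathwise bound only yields $|D^{i,N}| = \mathcal{O}(\sqrt{N})$, and the sign cancellations among the Gaussian weights must be exploited while respecting the circular dependence between paths and weights. Once this bound is in place, (i) follows from BDG applied to $\lambda(r_1) W^1$ together with Gr\"onwall, and (ii) follows from the same ingredients applied to $X^{1,N}_t - X^{1,N}_s = \int_s^t (f + D^{1,N}_u)\,du + \lambda(r_1)(W^1_t - W^1_s)$, with Cauchy--Schwarz on the Riemann integral and BDG on the Brownian increment. Markov's inequality together with the Arzel\`a--Ascoli compacts $\tilde K_\eta = \{x \in \C : \Vert x \Vert_{\tau,\infty} \leq M_\eta,\; w_x(\delta_n) \leq \eta_n\ \forall n\} \times D$ (for $w_x$ the modulus of continuity and suitably chosen $M_\eta, \delta_n, \eta_n$) then closes the loop described in the first paragraph and completes the proof of Theorem~\ref{thm:Tightness}.
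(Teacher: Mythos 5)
Your outer framework---reducing tightness of the random measures $\hat\mu_N$ to tightness of the mean measure $\bar\mu_N = \Exp_{Q^N}[\hat\mu_N]$ via Markov's inequality plus Prokhorov, then identifying $\bar\mu_N$ with the law of $(X^{1,N},r_1)$ under $Q^N$ by exchangeability, and finally invoking Arzel\`a--Ascoli through a second-moment bound and a Kolmogorov--Chentsov increment estimate---is a valid strategy in principle and is genuinely different from the paper's argument. However, there is a real gap precisely where you locate the ``principal technical obstacle'': the uniform $L^2$ control on the interaction drift under $Q^N$, $\sup_N \Exp_{Q^N}\big[(D^{i,N}_t)^2\big]<\infty$, is the entire difficulty of the problem and is not established here. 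The sketch via Gaussian integration by parts in the $\xi_{ij}$'s and a Gr\"onwall bootstrap on flow sensitivities $\partial X^{j,N}/\partial\xi_{ik}$ is not a routine calculation that can be filled in: the feedback between paths and weights is global and the bootstrap has not been shown to close (the sensitivities themselves enter through unknown moments of the coupled flow). The phrase ``conditionally on the trajectories the $\xi_{ij}$'s would immediately give $O(1)$ variance'' is also not a well-posed statement---conditioned on the paths, the weights are no longer Gaussian, precisely because the paths depend on them. In effect you have moved the hard part of the theorem into an unjustified claim that is, if anything, harder than proving tightness itself.

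The paper avoids direct moment estimates under $Q^N$ entirely. It uses Lemma~\ref{lemma2} and Lemma~\ref{lemma5.15} to compute $\der{Q^N}{P^{\otimes N}}$, from which one deduces a linear-in-$N$ bound on the relative entropy,
\[
I\big(Q^N \big| P^{\otimes N}\big) \leq C_T\,N,
\]
via a Gronwall-type estimate on the quantity $\varphi(t,\mathbf r)$ that is controlled using the uniform bound~\eqref{ineq:BoundOnLambda} on $\Lambda_t$ and $\widetilde K$. Then the entropy inequality~\eqref{eq:IneqRelativeEntropy}, applied with $\Phi = u\,\ind{\hat\mu_N\notin K}$, combined with the exponential tightness of $P^{\otimes N}$ at speed $N$ (a consequence of Sanov's theorem), yields a bound $Q^N(\hat\mu_N\notin K)\leq\varepsilon$ uniform in $N$ by choosing the Sanov-compact with large enough rate. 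This sidesteps the question of whether $X^{1,N}$ has uniformly bounded second moments under $Q^N$ (which, in the end, is a by-product of the convergence theorem rather than an input to it). If you wish to pursue your route, you would in any case need essentially the relative-entropy estimate anyway---the inequality $I(Q^N|P^{\otimes N})\leq C_T N$ is what gives (after a further entropy argument) $L^2$ control on the averaged drift $O_{\hat\mu_N}$ under $Q^N$---so I recommend adopting the paper's argument directly.
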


These two results imply convergence of the empirical measure towards the set of minima of the rate function $H$. Their uniqueness and characterization is subject of the following theorem demonstrated in section~\ref{sec:LimitIdentification}:
\begin{theorem}\label{thm:Limit}
  The good rate function $H$ achieves its minimal value at the unique probability measure $Q \in \M_1^+ (\C \times D)$ satisfying:
    \[Q \simeq P, \qquad \der{Q}{P}(x,r)=\E \left[ \exp\bigg\{ \frac 1 {\lambda(r)}\int_0^T G_t^{Q}(r) dW_t(x,r) - \frac 1 {2\lambda(r)^2} \int_0^T {G_t^{Q}(r)}^2 dt \bigg\} \right]\]
    where $(W_t)_{t\in [0,T]}$ is a $P$-brownian motion, and $G^{Q}(r)$ is, under $\mathcal{P}$, a Gaussian process with mean and covariance
    \[
\begin{cases}
	\E [G^{Q}_t(r)] = \int_{\C \times D} J(r,r')  S(x_{t-\tau(r,r')})dQ(x,r') \\
	    \E [G^{Q}_t(r)G^{Q}_s(r)] = \int_{\C \times D} \sigma(r,r')^2 S(x_{t-\tau(r,r')})S(x_{s-\tau(r,r')})dQ(x,r).
\end{cases}
    \]
\end{theorem}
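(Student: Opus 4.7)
The plan is to introduce a self-consistency map $\Phi$ on $\M_1^+(\C \times D)$ that sends a candidate measure $Q$ to the probability distribution whose density against $P$ is the right-hand side of the identity displayed in the statement, to show via a contraction argument that $\Phi$ admits a unique fixed point $Q^{\ast}$, and finally to identify $Q^{\ast}$ with the unique minimizer of $H$.

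\emph{Construction of $\Phi(Q)$.} For $Q \in \M_1^+(\C \times D)$, define under $\mathcal{P}$ the centered Gaussian process $(G^Q_t(r))_{t \in [0,T], r \in D}$ with the mean and covariance prescribed in the statement; this is well-defined because $S$ is bounded, $\lVert J\rVert_\infty, \smax < \infty$, and $\tau$ is bounded. On an independent Wiener space carrying a $\P$-Brownian motion $W$, solve the McKean--Vlasov-type SDE
\begin{equation*}
dX_t = \bigl( f(r,t,X_t) + G^Q_t(r) \bigr)\,dt + \lambda(r)\,dW_t, \qquad (X_s)_{s\in[-\bar{\tau},0]} \eqlaw \mu_0(r), \quad r \sim \pi,
\end{equation*}
and let $\Phi(Q)$ be the joint law of $(X,r)$. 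Applying Girsanov's theorem conditionally on $G^Q$ and exchanging the environment expectation $\E$ with the Wiener expectation by Fubini (Novikov's criterion holds uniformly in $Q$ thanks to the above bounds) yields $\Phi(Q) \ll P$ with precisely the density $\der{\Phi(Q)}{P}$ displayed in the theorem.

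\emph{Uniqueness of the fixed point.} Given two candidates $Q^1, Q^2$, couple the Gaussian processes $G^{Q^1}, G^{Q^2}$ on the environment and drive the two SDEs above by a common Brownian motion. The Lipschitz regularity of $f$, $S$, $\tau$, $J$ and $\sigma$ allows one to control the discrepancy of the means and covariances of $G^{Q^i}$ by a Wasserstein-type distance $d_t$ between the restrictions of $Q^1,Q^2$ to $\C([-\bar{\tau},t],\R) \times D$. Standard $L^2$ moment estimates on the coupled solutions combined with Gronwall's inequality then yield a strict contraction of $\Phi$ in $d_t$ on small time windows; iterating over successive windows---using that the solution at time $t$ depends on $Q$ only through its restriction up to time $t$---extends uniqueness to all of $[-\bar{\tau},T]$.

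\emph{Identification with the minimizer of $H$.} The rate function produced in Section~\ref{sec:LDP} admits a variational expression of the form $H(\mu) = I(\mu|P) - \Gamma(\mu)$, where $\Gamma$ arises from integrating out the Gaussian weights in~\eqref{eq:Density}; concretely $\Gamma(\mu) = \int \log \E\bigl[\exp(\cdots)\bigr]\,d\mu$ with integrand equal to the logarithm of the density appearing in the statement. At $\mu = Q^{\ast}$, the fixed-point identity gives $I(Q^{\ast}|P) = \Gamma(Q^{\ast})$, hence $H(Q^{\ast}) = 0$. Conversely, writing the first variation of $H$ at any minimizer $\mu$ produces the Euler--Lagrange equation $\log \tfrac{d\mu}{dP} = \log \E \bigl[\exp(\cdots)\bigr]$, that is $\mu = \Phi(\mu)$, and Step~2 forces $\mu = Q^{\ast}$. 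The hardest step is precisely this Euler--Lagrange computation: $\Gamma$ depends non-linearly on $\mu$ through \emph{both} the mean and the covariance of $G^{\mu}$, so extracting the first variation demands a careful interchange of differentiation, the environment expectation $\E$, and the stochastic integral against $W$; and since $H$ is not manifestly convex because of the quadratic appearance of $\mu$ in the Gaussian covariance, the contraction from Step~2 is indispensable to close the uniqueness of the minimizer.
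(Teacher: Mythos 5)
The construction of $\Phi$, its identification with the density in the statement via Girsanov and Fubini, and the contraction argument for uniqueness of the fixed point are all consistent with what the paper does (the paper's map $\mu\mapsto Q_\mu$ of \eqref{def:Qnu} is your $\Phi$, and the paper's Theorem~\ref{thm:UniquenessOfTheMinimum} is a Picard/Gronwall contraction in the Vaserstein distance $d_t^V$). One technical nuance you gloss over: to run the contraction one needs to compare genuine pathwise SDE solutions, and the paper first integrates out the environment using Lemma~\ref{lemma5.15}, rewriting the Gaussian mixture density as a \emph{single} exponential martingale with drift $O_\mu(t,x,r)$; your version, which keeps $G^{Q^i}$ as an unaveraged random drift, would require additional care to turn the couplings into an honest moment estimate.

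The substantive gap is in the third step. You invoke an Euler--Lagrange equation $\log\tfrac{d\mu}{dP}=\log\E[\exp(\cdots)]$ for a minimizer of $H$, but you never derive it, and it is not the obvious first variation: since $\Gamma(\mu)=\int F_\mu\,d\mu$ with $F_\mu=\log\E_\gamma[\exp X^\mu]$ depending on $\mu$ \emph{both} through the integrating measure and through the mean $m_\mu$ and covariance $K_\mu$ of the Gaussian process, the first variation of $\Gamma$ carries an extra term $\int(\delta F_\mu)\,d\mu$ beyond $F_\mu$ itself. Either that term vanishes by some non-trivial cancellation (which you would have to establish), or the Euler--Lagrange equation you wrote is simply incorrect as stated. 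You flag this as ``the hardest step'' and lean on convexity failing, but this is precisely the ``intricate variational study'' that the paper explicitly avoids. The paper's route (Lemma~\ref{Qcharac}) sidesteps the first-variation computation entirely: having already shown in Theorem~\ref{thm:Qnu} that $H_\nu(\cdot)=I(\cdot|Q_\nu)$, one specializes $\nu=\mu$ and uses $\Gamma(\mu)=\Gamma_\mu(\mu)$ to write $H(\mu)=H_\mu(\mu)=I(\mu|Q_\mu)$; then $H(\mu)=0$ at a minimizer of a good rate function forces $I(\mu|Q_\mu)=0$, i.e.\ $\mu=Q_\mu$, with no differentiation of $\Gamma$ in $\mu$ needed. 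You should replace the Euler--Lagrange heuristic with this relative-entropy identity, which is already at hand from the LDP upper-bound machinery.
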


\medskip

The convergence result of Theorem~\ref{thm:Convergence} also implies propagation of chaos, thanks to a result due to Sznitman~\cite[Lemma 3.1]{sznitman:84}:
\begin{theorem}\label{thm:PropagationOfChaos}
	The system enjoys the propagation of chaos property. In other terms, $Q^N$ is $Q$-chaotic, i.e. for any bounded continuous functions $\phi_1,\cdots,\phi_m \in \C_b\big( \C \times D \big)$ and any neuron indexes $(k_1,\cdots, k_m)$, we have:
	\[
	\lim_{N\to\infty}\int \prod_{j=1}^m \phi_j(x^{k_j},r_{k_j})dQ^N(x,r) = \prod_{j=1}^m \int \phi_j(x,r)dQ(x,r).
	\]
\end{theorem}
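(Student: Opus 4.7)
The plan is to obtain the propagation of chaos as a direct consequence of Theorem~\ref{thm:Convergence} via Sznitman's equivalence between convergence of empirical measures and chaoticity. Two ingredients are required: exchangeability of $Q^N$ as a probability measure on $(\C \times D)^N$, and convergence in law of $\hat{\mu}_N$ under $Q^N$ to a \emph{deterministic} limit.

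First, I would verify that $Q^N$ is exchangeable. The positions are drawn under $\pi^{\otimes N}$, so they form an i.i.d. sequence. Conditionally on $\mathbf{r}$, the family $\{J_{ij}\}_{1\leq i,j\leq N}$ consists of independent Gaussians whose parameters depend on $(r_i,r_j)$; thus for any permutation $\sigma \in \mathfrak{S}_N$, simultaneously relabeling indices, positions, Brownian motions, and the coupling matrix leaves equation~\eqref{eq:Network} invariant in distribution. Averaging this invariance against the law of $J$ (using the construction $Q^N_{\mathbf r} = \mathcal{E}_J(Q^N_{\mathbf r}(J))$ from Lemma~\ref{lemma:ContinuityOfSolutions}) and then integrating against $\pi^{\otimes N}$ shows that $Q^N$ is invariant under the permutation action of $\mathfrak{S}_N$ on $(\C \times D)^N$, i.e.\ $Q^N$ is exchangeable.

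Second, Theorem~\ref{thm:Convergence} states that the law of $\hat{\mu}_N$ under $Q^N$ converges weakly to $\delta_Q$ on $\M_1^+(\C\times D)$. Since the limit is a Dirac mass, this is equivalent to the convergence in probability of $\hat{\mu}_N$ to the deterministic measure $Q \in \M_1^+(\C\times D)$.

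Finally, I would invoke Sznitman's Lemma~3.1 of~\cite{sznitman:84}, applied in the Polish space $\C\times D$: for an exchangeable sequence of laws $Q^N$ on $(\C\times D)^N$, the property of being $Q$-chaotic is equivalent to the convergence in law of the empirical measure $\hat{\mu}_N$ to the constant $Q$. Combining this equivalence with the two points above yields, for any $m\in\N^*$, any bounded continuous $\phi_1,\dots,\phi_m \in \C_b(\C\times D)$ and any indices $(k_1,\dots,k_m)$,
\[
\lim_{N\to\infty} \int \prod_{j=1}^m \phi_j(x^{k_j},r_{k_j})\, dQ^N(x,r) = \prod_{j=1}^m \int \phi_j(x,r)\, dQ(x,r),
\]
which is exactly the statement of the theorem. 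The only step involving genuine content is the careful verification of exchangeability; once this and Theorem~\ref{thm:Convergence} are in hand, Sznitman's lemma does all the remaining work, so I do not anticipate a substantial technical obstacle.
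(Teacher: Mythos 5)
Your proposal is correct and follows essentially the same route as the paper: the paper deduces Theorem~\ref{thm:PropagationOfChaos} directly from Theorem~\ref{thm:Convergence} via Sznitman's equivalence lemma~\cite[Lemma 3.1]{sznitman:84}, exactly as you do. You additionally spell out the exchangeability verification for $Q^N$, which the paper leaves implicit; this is the right thing to check and your argument for it is sound.
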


We now proceed to the proof of our results.

\section{Large Deviation Principle}\label{sec:LDP}
The aim of this section is to establish the weak large deviation principle for our network. It relies on three key points. First, we will characterize the good rate function; the intuition for constructing this functional comes from Varadhan's lemma, but in the present case that lemma does not readily apply and we need to demonstrate that the candidate is indeed a good rate function. Second, we will show an upper-bound result on compact sets. The spatial framework will introduce new difficulties, requiring the introduction of an appropriate distance on $\C \times D$. Third, the tightness of our collection of empirical measures will allow to conclude on a weak large-deviations principle. 

\subsection{Construction of the good rate function}\label{sec:GoodRate}

Let us consider the interaction term of \eqref{eq:Network}:
\[
G^{i,N}_t(\mathbf{x},\mathbf{r}):=\frac{1}{\lambda(r_i)} \sum_{j=1}^N J_{ij} S(x^j_{t-\tau(r_i,r_j)}).
\]
By virtue of a heuristic central limit theorem, this term should behave as a Gaussian process in the large $N$ limit. With this in mind, we introduce, for $\mu \in {\M_1^+(\C \times D)}$, the plausible mean and covariance functions of its limit, respectively defined on $[0,T]^2 \times D$ and $[0,T] \times D$:
\begin{equation*}
\begin{cases}
	K_{\mu}(s,t,r)&:=\displaystyle{\frac{1}{\lambda(r)^2} \int_{\C \times D} \sigma(r,r')^2 S(x_{t-\tau(r,r')})S(x_{s-\tau(r,r')}) d\mu(x,r')}\\
	m_{\mu}(t,r) &:= \displaystyle{\frac{1}{\lambda(r)} \int_{\C \times D} J(r,r') S(x_{t-\tau(r,r')})d\mu(x,r')}.\\
\end{cases}
\end{equation*}
Here, $\mu$ can be understood as the putative limit law of the couple $(x^j,r_j)$ if it exists. Covariance and mean functions $K_{\mu}$ and $m_{\mu}$ are well defined as for every fixed $r \in D$ the two maps
\[
A_r:(x,r')\to \frac{1}{\lambda(r)} J(r,r') S(x_{\cdot-\tau(r,r')}), \quad
\tilde{A}_r(x,r')\to \frac{1}{\lambda(r)^2} \sigma(r,r')^2 S(x_{\cdot-\tau(r,r')})S(x_{\cdot-\tau(r,r')})
\]
are continuous
for the classical product norm $\lVert (x,r') \rVert_{\C \times D} := \underset{t\in [-\bar{\tau},T]}{\sup}|x(t)|+ \lVert r \rVert_{\R^d}$. Hence, they are Borel-measurable, and integrable with respect to every element of $\M^+_1(\C \times D)$.
Remark that, since $S$ takes value in $[0,1]$, both functions are bounded: $|K_{\mu}(s,t,r)| \leq \frac{\lVert \sigma \rVert_{\infty}^2}{\ls^2}$ and $|m_{\mu}(t,r)| \leq \frac{\lVert J \rVert_{\infty} } {\ls}$. Moreover, as $\mu$ charges continuous functions, $K_{\mu}$ and $m_{\mu}$ are continuous maps by the dominated convergence theorem. 

Clearly enough, $K_{\mu}$ has a covariance structure. As a consequence, we can define a probability space $(\hat{\Omega},\hat{\F},\gamma)$ and a family of independent stochastic processes $\big(G^{\mu}(r)\big)_{\mu \in \M_1^+(\C \times D),r \in D}$ for any measure $\mu \in \M^+_1(\C \times D)$, such that $G^{\mu}(r)$ is a centered Gaussian process with covariance $K_{\mu}(.,.,r)$ under $\gamma$. This ensures continuity of the map $r \to \mathcal{L}\big( G^{\mu}(r)\big)$. We will denote by $\E_{\gamma}$ the expectation under this measure.

\begin{remark}\label{muDependence}
\begin{enumerate}
\item As in \cite{cabana-touboul:12,ben-arous-guionnet:95}, we could alternatively have defined a family of probability measure $\big(\gamma_{\mu}\big)_{\mu \in \M^+_1(\C \times D)}$, and a family of Gaussian processes $\big( G(r) \big)_{r \in D}$ with covariance $K_{\mu}$ under $\gamma_{\mu}$. This approach is equivalent to ours, but the latter present the advantage of being very adapted to Fubini's theorem.
\item The family of processes $\big(G^{\mu}_t(r)\big)_{\mu,r}$ is intended to encompass possible candidates for the effective asymptotic interactions $\lim_N \Big(G^{i,N}(\mathbf{x},\mathbf{r})\Big)_{i \in \N^*}$. In these interactions, the Gaussian weights are independent for different particles, so that it seems natural to assume independence of $\big(G^{\mu}_t(r)\big)$ for different locations. Notably in our proof, we can swap from a continuous version of $G^{\mu}_t(r)$ to an independent one very easily, as they  are never taken jointly under $\gamma$. Thus, we can literally choose their covariance structure. For the sake of measurability under any Borel measure of $\M_1^+\big( \C \times D \big)$, we will mainly work with the continuous version of $G^{\mu}(r)$, and will explicitly introduce independent versions when independence is needed.
\end{enumerate}
\end{remark}	
	
We recall a few general properties on the relative entropy that are often used throughout the paper. For $p$ and $q$ two probability measures on a Polish space $E$ (see e.g.~\cite[Lemma 3.2.13]{deuschel-stroock:89}), we have the identity:
\begin{equation*}
I(q\vert p) = \sup \left\{\int_{E} \Phi dq - \log \int_{E} \exp \Phi dp \;;\; \Phi \in \C_b(E)\right\},
\end{equation*}
which implies in particular that for any bounded measurable function $\Phi$ on $E$,
\begin{equation}
  \int_{E} \Phi dq  \leq I(q\vert p) + \log \int_{E} \exp \Phi dp. \label{eq:IneqRelativeEntropy}
\end{equation}
If $\Phi$ is a lower-bounded (or upper-bounded) measurable function this inequality holds by monotone convergence.

We introduce some useful objects for the general scope of our demonstration. For any Gaussian process $(G_t)_{t \in [0,T]}$ of $\big(\hat{\Omega}, \hat{\mathcal{F}}, \gamma \big)$, and any $t \in [0,T]$
\begin{equation}\label{def:Lambda}
\Lambda_t(G):=\frac{ \exp\Big\{ - \frac{1}{2} \int_0^t G_s^2 ds \Big\}}{\E_{\gamma} \Big[\exp\Big\{ - \frac{1}{2} \int_0^t G_s^2 du\Big\} \Big]}.
\end{equation}
For any $t \in [0,T]$, $r \in D$, and $\nu \in \M_1^+(\C \times D)$ the following defines a probability measure on $\big(\hat{\Omega}, \hat{\mathcal{F}}\big)$ (see \cite{neveu:70}):
\[
d\gamma_{\widetilde{K}_{\nu,r}^{t}}(\omega) :=  \Lambda_t(G^{\nu}(\omega,r)) d\gamma(\omega), \quad \forall \omega \in \hat{\Omega}.
\]
As proven in \cite{neveu:70}, $G^{\nu}(r)$ is still a centered Gaussian process under $\gamma_{\widetilde{K}_{\nu,r}^{t}}$, with covariance given by:
\[
\widetilde{K}_{\nu,r}^{t}(s,u):=\E_{\gamma} \bigg[ G^{\nu}_u(r)G^{\nu}_s(r) \Lambda_t\big(G^{\nu}(r)\big) \bigg].
\]
We also define for any $\nu \in \M_1^+\big(\C \times D\big)$, $(x,r) \in \C \times D$ and $t \in [0,T]$, the processes
\begin{equation}\label{LnuVnuDef}
L^{\nu}_{t}(x,r) := \int_0^t G^{\nu}_s(r)\Big( dW_s(x,r) - m_{\nu}(s,r) ds\Big), \quad V^{\nu}_t(x,r):= W_t(x,r) - \int_0^t m_{\nu}(s,r) ds.
\end{equation}

Here are a few properties for these objects:

\begin{proposition}\label{LambdaProperties}
There exists a constant $C_T>0$, such that for any $\nu \in \M_1^+\big(\C \times D \big)$, $r \in D$, $t \in [0,T]$,

\begin{equation}\label{ineq:BoundOnLambda}
	\underset{0\leq s,u \leq t}{\sup} \widetilde{K}_{\nu,r}^{t}(s,u) \leq C_T,\quad 
	\Lambda_t\big(G^{\nu}(r) \big) \leq C_T,
	\end{equation}

	\begin{align}\label{eq:KtildeExpectation}
	\E_{\gamma} \bigg[ \exp\bigg\{-\frac{1}{2} \int_0^T {G^{\nu}_t(r)}^2 dt\bigg\} \bigg] = \exp\Big\{ -\frac{1}{2} \int_0^T \widetilde{K}_{\nu,r}^t(t,t) dt \Big\}.
	\end{align}
Moreover, if $(G_t)_{0\leq t \leq T}$ and $(G'_t)_{0\leq t \leq T}$ are two centered Gaussian processes of $\big(\hat{\Omega},\hat{\mathcal{F}},\gamma\big)$ with uniformly bounded covariance, then exists $\tilde{C}_T>0$ such that for all $t \in [0,T]$,
\begin{align}\label{ineq:DiffLambda}
\big| \Lambda_t(G)-\Lambda_t(G') \big| \leq \tilde{C}_T  \bigg\{ \int_0^t \E_{\gamma} \Big[ \big(G_s - {G'_s}\big)^2 \Big]^{\frac{1}{2}} ds + \int_0^t \big|G_s^2 - {G'_s}^2\big| ds \bigg\}.
\end{align}

\end{proposition}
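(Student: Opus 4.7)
The plan is to treat the four assertions sequentially as one-dimensional Gaussian manipulations, exploiting throughout that for each fixed $\nu, r$ the marginal variance $\E_{\gamma}[G^{\nu}_s(r)^2] = K_{\nu}(s,s,r)$ is uniformly bounded by $\smax^2/\ls^2$ (a fact already noted above the proposition).

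For the uniform bounds \eqref{ineq:BoundOnLambda}, first observe that the numerator of $\Lambda_t$ is bounded by $1$, while Jensen's inequality applied to the convex function $x \mapsto e^{-x}$ gives
\[
\E_{\gamma}\bigl[\exp\{-\tfrac{1}{2}\textstyle\int_0^t G^{\nu}_s(r)^2\,ds\}\bigr] \;\geq\; \exp\bigl\{-\tfrac{1}{2}\textstyle\int_0^t K_{\nu}(s,s,r)\,ds\bigr\} \;\geq\; \exp\bigl\{-T\smax^2/(2\ls^2)\bigr\} =: c_T.
\]
Hence $\Lambda_t \leq 1/c_T$, which is the second bound; Cauchy--Schwarz under $\gamma$ then yields $\widetilde{K}_{\nu,r}^{t}(s,u) \leq \|\Lambda_t\|_{\infty}\E_{\gamma}[G^{\nu}_s(r)^2]^{1/2}\E_{\gamma}[G^{\nu}_u(r)^2]^{1/2} \leq C_T$ after relabeling the constant. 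For \eqref{eq:KtildeExpectation} I would view $F(t) := \E_{\gamma}[\exp\{-\tfrac{1}{2}\int_0^t G^{\nu}_s(r)^2\,ds\}]$ as the solution of a first-order ODE: differentiating under the expectation (legitimate by dominated convergence, since the integrand is bounded by $1$ and the dominating $t$-derivative $\tfrac12 G_t^2$ is $\gamma$-integrable), one finds $F'(t) = -\tfrac{1}{2}F(t)\widetilde{K}_{\nu,r}^{t}(t,t)$ directly from the definition of $\widetilde{K}$. Integrating $(\log F)'$ from $0$ to $T$, with $F(0)=1$, gives the identity.

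For the continuity estimate \eqref{ineq:DiffLambda} the natural split is
\[
\Lambda_t(G) - \Lambda_t(G') \;=\; \frac{N_t(G) - N_t(G')}{D_t(G)} \;+\; N_t(G')\,\frac{D_t(G') - D_t(G)}{D_t(G)\,D_t(G')},
\]
where $N_t(G) := \exp\{-\tfrac{1}{2}\int_0^t G_s^2\,ds\}$ and $D_t(G) := \E_{\gamma}[N_t(G)]$. Both denominators are bounded below by $c_T$ by the same Jensen argument (the hypothesis that $G$ and $G'$ have uniformly bounded covariance is crucial here); the elementary inequality $|e^{-a} - e^{-b}| \leq |a-b|$ for $a,b \geq 0$ converts $|N_t(G) - N_t(G')|$ into $\tfrac{1}{2}\int_0^t |G_s^2 - {G'_s}^2|\,ds$, which produces the second integral in \eqref{ineq:DiffLambda}. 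Taking expectations in the same pointwise bound and using the factorization $|G_s^2 - {G'_s}^2| = |G_s - G'_s|\cdot|G_s + G'_s|$ with Cauchy--Schwarz, together with the uniform $L^2(\gamma)$-bound on $G_s + G'_s$ supplied by the bounded-covariance hypothesis, controls $|D_t(G) - D_t(G')|$ by the first integral. No single step is a genuine obstacle; the only care required is to keep track of how $\tilde{C}_T$ depends on $T$, $\smax$, $\ls$, and the uniform covariance bound on $G, G'$, so that the constant is indeed common to the whole family.
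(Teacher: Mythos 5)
Your proof is correct and follows essentially the same route as the paper: the same Jensen lower bound on the normalizing denominator, the same Cauchy--Schwarz control of $\widetilde{K}_{\nu,r}^t$, the same ODE argument $F'=-\tfrac12\widetilde{K}^t_{\nu,r}(t,t)F$ for the identity, and the same $\frac{a}{b}-\frac{a'}{b'}$ decomposition combined with $|e^{-x}-e^{-y}|\le|x-y|$ on $\R_+$ and Cauchy--Schwarz on $|G_s^2-G_s'^2|$ for the continuity estimate.
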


\begin{proof}
Observe that by Jensen inequality:
\[
\Lambda_t \big( G^{\nu}(r) \big) \leq \E_{\gamma} \Big[\exp\Big\{ - \frac{1}{2} \int_0^t G^{\nu}_s(r)^2 du\Big\} \Big]^{-1} \overset{\text{Jensen}}{\leq} \exp\Big\{ \frac{1}{2} \int_0^t \E_{\gamma} \Big[G^{\nu}_s(r)^2\Big] du\Big\} \leq \exp\Big\{ \frac{\smax^2 t}{2\ls^2} \Big\}.
\]
As a consequence:
\begin{align*}
	\widetilde{K}_{\nu,r}^{t}(s,u) = \E_{\gamma} \Bigg[ G^{\nu}_u(r)G^{\nu}_s(r) \Lambda_t\big(G^{\nu}(r) \big) \Bigg] & \overset{\mathrm{C.S.}}{\leq}  \sqrt{K_{\nu}(s,s,r)K_{\nu}(t,t,r)} \exp\Big\{ \frac{\smax^2 t}{2\ls^2} \Big\} \\
	& \leq \frac{\smax^2}{\ls^2} \exp\Big\{ \frac{\smax^2 t}{2\ls^2} \Big\},
\end{align*}
For equality \eqref{eq:KtildeExpectation}, let $f(t):=\E_{\gamma} \bigg[ \exp\bigg\{-\frac{1}{2} \int_0^t {G^{\nu}_s(r)}^2 ds\bigg\} \bigg]$. As $(t,\omega) \to {G^{\nu}_t(\omega,r)}^2 \exp\bigg\{-\frac{1}{2} \int_0^t {G^{\nu}_s(\omega,r)}^2 ds\bigg\}$ is a well defined, $\gamma$-a.s. continuous, and integrable under $\gamma$, we have
	\begin{align*}
	f'(t) &= -\frac{1}{2}\E_{\gamma} \bigg[{G^{\nu}_t(r)}^2 \exp\bigg\{-\frac{1}{2} \int_0^t {G^{\nu}_s(r)}^2 ds\bigg\} \bigg] = -\frac{1}{2} \widetilde{K}_{\nu,r}^t(t,t) f(t),
	\end{align*}
	so that integrating $\frac{f'}{f}$ gives the result.
Furthermore, letting $(G_t)_{0\leq t \leq T}$ and $(G'_t)_{0\leq t \leq T}$ be two centered $\gamma$-Gaussian processes with variance bounded by a common constant $C_T$, we have:

\begin{align*}
& \big| \Lambda_t(G)-\Lambda_t(G')\big| = \bigg| \frac{\exp\Big\{ - \frac{1}{2} \int_0^t G_s^2 ds \Big\}}{\E_{\gamma} \Big[ \exp\Big\{ - \frac{1}{2} \int_0^t G_s^2 ds \Big\}\Big]}- \frac{\exp\Big\{ - \frac{1}{2} \int_0^t {G'_s}^2 ds \Big\}}{\E_{\gamma} \Big[ \exp\Big\{ - \frac{1}{2} \int_0^t {G'_s}^2 ds \Big\}\Big]} \bigg| \\
& \leq \exp\Big\{ \frac{\smax^2 t}{\ls^2} \Big\} \Bigg\{ \bigg| \E_{\gamma} \bigg[ \exp\Big\{ - \frac{1}{2} \int_0^t G_s^2 ds \Big\}-\exp\Big\{ - \frac{1}{2} \int_0^t {G'_s}^2 ds \Big\}\bigg] \bigg|\\
& + \bigg| \exp\Big\{ - \frac{1}{2} \int_0^t G_s^2 ds \Big\}-\exp\Big\{ - \frac{1}{2} \int_0^t {G'_s}^2 ds \Big\}\bigg| \Bigg\},\\
& \leq \frac{1}{2}  \exp\Big\{ \frac{\smax^2 t}{\ls^2} \Big\} \bigg\{ \int_0^t \E_{\gamma} \Big[ \big|G_s^2 - {G'_s}^2\big| \Big] ds + \int_0^t \big|G_s^2 - {G'_s}^2\big| ds \bigg\},
\end{align*}
where we have used the Lipschitz-continuity of exponential on $\R_-$. Consequently, relying on Cauchy-Schwarz inequality, we obtain
\begin{align*}
\big| \Lambda_t(G)-\Lambda_t(G')\big| \overset{\mathrm{C.S.}}{\leq} \tilde{C}_T  \bigg\{ \int_0^t \E_{\gamma} \Big[ \big(G_s - {G'_s}\big)^2 \Big]^{\frac{1}{2}} ds + \int_0^t \big|G_s^2 - {G'_s}^2\big| ds \bigg\}.
\end{align*}
\end{proof}

We now state a key result to our analysis

\begin{lemma}\label{lemma2}

\begin{equation*}
\der{Q^N}{P^{\otimes N}} (\mathbf{x},\mathbf{r}) = \exp{\Big\{ N \bar{\Gamma}(\hat{\mu}_N) \Big\} },
\end{equation*}
where
\begin{equation*}
\bar{\Gamma}(\hat{\mu}_N) :=\frac{1}{N} \sum_{i=1}^N \log \E_{\gamma} \bigg[ \exp\bigg\{ \int_0^T \big(G^{\hat{\mu}_N}_t(r_i)+m_{\hat{\mu}_N}(t,r_i)\big)dW_t(x^i,r_i) - \frac{1}{2} \int_0^T \big(G^{\hat{\mu}_N}_t(r_i)+m_{\hat{\mu}_N}(t,r_i)\big)^2dt\bigg\}  \bigg],
\end{equation*}
with $W$ defined as in \eqref{def:W}.
\end{lemma}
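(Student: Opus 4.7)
The plan is to start from the pathwise Girsanov density \eqref{eq:Density} of $Q^N_{\mathbf r}(J)$ with respect to $P_{\mathbf r}$ and integrate out the coupling matrix $J$. First I note that since $dP^{\otimes N}(\mathbf x,\mathbf r) = dP_{\mathbf r}(\mathbf x)\, d\pi^{\otimes N}(\mathbf r)$ and, by Lemma \ref{lemma:ContinuityOfSolutions}, $dQ^N(\mathbf x, \mathbf r) = dQ^N_{\mathbf r}(\mathbf x)\, d\pi^{\otimes N}(\mathbf r)$, the two Radon--Nikodym densities agree: $\frac{dQ^N}{dP^{\otimes N}}(\mathbf x, \mathbf r) = \frac{dQ^N_{\mathbf r}}{dP_{\mathbf r}}(\mathbf x)$. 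Next, a Fubini-type exchange using $Q^N_{\mathbf r} = \mathcal{E}_J[Q^N_{\mathbf r}(J)]$ yields $\frac{dQ^N_{\mathbf r}}{dP_{\mathbf r}}(\mathbf x) = \mathcal{E}_J\bigl[\frac{dQ^N_{\mathbf r}(J)}{dP_{\mathbf r}}(\mathbf x)\bigr]$, the integrand being given explicitly by \eqref{eq:Density}.

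The crucial observation is that the exponent in \eqref{eq:Density} decomposes as a sum over $i$ whose $i$-th summand depends only on the row $(J_{ij})_{j=1,\dots,N}$. Since these rows are independent under $\mathcal{P}_J$, the expectation of the exponential factors as a product over $i$. For each $i$ I introduce
\[
G^{i,N}_t(\mathbf x, \mathbf r) := \frac{1}{\lambda(r_i)} \sum_{j=1}^N J_{ij}\, S\bigl(x^j_{t-\tau(r_i,r_j)}\bigr),
\]
which, conditionally on $(\mathbf x, \mathbf r)$, is a Gaussian process in $t$, being a finite linear combination of independent Gaussian scalars. A direct computation using the law of the $J_{ij}$ recalled in the footnote of Section~\ref{sec:MathSetting} shows that its $\mathcal P_J$-mean equals $m_{\hat\mu_N}(t,r_i)$ and its $\mathcal P_J$-covariance equals $K_{\hat\mu_N}(s,t,r_i)$, exactly the mean and covariance constructed in Section~\ref{sec:GoodRate}. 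Hence $(G^{i,N}_t)_t$ has the same $\mathcal{P}_J$-law as $(G^{\hat\mu_N}_t(r_i) + m_{\hat\mu_N}(t,r_i))_t$ under $\gamma$.

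To conclude I must transfer the $\mathcal{P}_J$-expectation of $\exp\bigl(\int_0^T G^{i,N}_t\, dW_t(x^i,r_i) - \tfrac12 \int_0^T (G^{i,N}_t)^2\, dt\bigr)$ to the analogous $\gamma$-expectation of the same functional applied to $G^{\hat\mu_N}(r_i)+m_{\hat\mu_N}(\cdot,r_i)$; summing the logarithms then reconstitutes $N\bar\Gamma(\hat\mu_N)$. The technical step, and the main obstacle, is justifying this transfer rigorously. For $P_{\mathbf r}$-almost every $\mathbf x$ the Brownian path $t \mapsto W_t(x^i,r_i)$ is a fixed continuous function; by bilinearity,
\[
\int_0^T G^{i,N}_t\, dW_t(x^i,r_i) = \sum_{j=1}^N J_{ij}\, \alpha^{ij}(\mathbf x,\mathbf r), \qquad \alpha^{ij}(\mathbf x,\mathbf r):= \frac{1}{\lambda(r_i)}\int_0^T S\bigl(x^j_{t-\tau(r_i,r_j)}\bigr)\, dW_t(x^i,r_i),
\]
so this stochastic integral is a $(\mathbf x,\mathbf r)$-measurable linear form in the Gaussian vector $J_{i\cdot}$; likewise $\int_0^T (G^{i,N}_t)^2\, dt$ is a quadratic form in $J_{i\cdot}$ with $(\mathbf x,\mathbf r)$-measurable coefficients. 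Both expressions are therefore measurable functionals of the finite-dimensional Gaussian vector $J_{i\cdot}$, and the equality in law established in the previous paragraph propagates to the exponential functional, completing the identification.
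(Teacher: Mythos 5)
Your proof is correct and follows essentially the same path as the paper's: express the density of $Q^N_{\mathbf r}(J)$ via Girsanov, average over $J$ using Fubini, factor by row-independence of $J$, and identify the $\mathcal P_J$-law of $(G^{i,N}_t)_t$ with the $\gamma$-law of $(G^{\hat\mu_N}_t(r_i)+m_{\hat\mu_N}(t,r_i))_t$ via matching mean and covariance. The only difference is that you spell out the transfer step (reduction of the stochastic integral and quadratic term to measurable linear/quadratic forms in the finite-dimensional Gaussian vector $J_{i\cdot}$), which the paper leaves implicit after simply noting the equality of Gaussian laws.
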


\begin{proof}
Let, for all $(\mathbf{x},\mathbf{r}) \in (\C \times D)^N$
\[
X^{N}_i(\mathbf{x},\mathbf{r}) := \int_0^T G^{i,N}_t(\mathbf{x},\mathbf{r}) dW_t(x^i,r_i) -\frac{1}{2} \int_0^T {G^{i,N}_t(\mathbf{x},\mathbf{r})}^2 dt,
\]
which is well defined under $P_{\mathbf{r}}$ as $W(\cdot,r)$ is a $P_r$-Brownian motion.
Going back to equation \eqref{eq:Density}, we find:

\begin{equation*}
	\der{Q^N_{\mathbf{r}}(J)}{P_{\mathbf{r}}}(\mathbf{x}) = \exp\Big(\sum_{i=1}^N  X^{N}_i(\mathbf{x},\mathbf{r})\Big).
\end{equation*}

Averaging on $J$ and applying Fubini theorem, we find that $Q^N_{\mathbf{r}} \ll P_{\mathbf{r}}$, with density $\der{Q^N_{\mathbf{r}}}{P_{\mathbf{r}}}(\mathbf{x}) = \mathcal{E}_J \Big[ \exp\Big(\sum_{i=1}^N  X^{N}_i(\mathbf{x},\mathbf{r})\Big) \Big]$.
Moreover, equalities $dQ^N(\mathbf{x},\mathbf{r})=dQ^N_{\mathbf{r}}(\mathbf{x})d\pi^{\otimes N}(\mathbf{r})$ and $dP^{\otimes N}(\mathbf{x},\mathbf{r})= dP_{\mathbf{r}}(\mathbf{x}) d\pi^{\otimes N}(\mathbf{r})$ give
\begin{align*}
	\frac{dQ^N}{dP^{\otimes N}}(\mathbf{x},\mathbf{r}) & = \prod_{i=1}^N \mathcal{E}_J \big[ \exp\big( X^{N}_i(\mathbf{x},\mathbf{r})\big) \big] = \exp\bigg\{ \sum_{i=1}^N \log \E_{J} \Big[ \exp\big( X^{N}_i(\mathbf{x},\mathbf{r})\big) \Big] \bigg\},
\end{align*}

\noindent where we have used the independence of the synaptic weights $J_{ij}$. Note that here $\mathbf{x}$ are coordinates, thus independent of the $J_{ij}$, and the fact that $\bigg\{ G^{i,N}_t(\mathbf{x},\mathbf{r}) , 0 \leq t \leq T \bigg\}$ is, under $\mathcal{P}_J$, a Gaussian process with covariance $K_{\hat{\mu}_N}(t,s,r_i)$, and mean $m_{\hat{\mu}_N}(t,r_i)$.
\end{proof}

Varadhan's lemma motivates to consider the map:
\begin{equation*}
\int_{\C \times D} \log \E_{\gamma} \Big[ \exp\big\{ X^{\mu}(x,r) \big\}  \Big] d\mu(x,r),
\end{equation*}
with, for all $(x,r) \in \C \times D$ and $\mu \in \M_1^+(\C \times D)$,
\begin{equation}\label{def:Xmu}
X^{\mu}(x,r) := \int_0^T \big( G^{\mu}_t(r) + m_{\mu}(t,r) \big) dW_t(x,r) -\frac{1}{2} \int_0^T \big( G^{\mu}_t(r) + m_{\mu}(t,r) \big)^2 dt.
\end{equation}
The rigorous definition of that functional and few useful properties are subject of the following:
\begin{proposition}\label{prop:GammaBehaviour}
The map
\begin{align}\label{eq:Gamma}
\Gamma := \mu \in \M_1^+\big(\C \times D \big) \to
\left\{
\begin{array}{cl}
{\int_{\C \times D} \log \E_{\gamma} \Big[ \exp\big\{ X^{\mu}(x,r) \big\}  \Big] d\mu(x,r)} & \text{if } I(\mu|P) < \infty,\\
+ \infty & \text{otherwise }.
\end{array}
\right.
\end{align}
is well defined in $\R \cup \{ + \infty\}$, and satisfies
\begin{enumerate}
\item $\Gamma \leq I(\cdot|P)$,
\item $\exists \iota \in ]0, 1[, e \geq 0$, $|\Gamma(\mu)| \leq \iota I(\mu|P) + e$.
\end{enumerate}
\end{proposition}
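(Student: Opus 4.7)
The proof naturally splits into three tasks: well-definedness of $\Gamma$ on $\{\mu : I(\mu|P) < \infty\}$, the inequality $\Gamma \le I(\cdot|P)$, and the bilateral control $|\Gamma(\mu)| \le \iota I(\mu|P) + e$ for some $\iota \in \,]0,1[$ and $e \ge 0$. All three rest on two structural ingredients: a Girsanov interpretation of the functional $X^\mu$ defined in \eqref{def:Xmu}, according to which, for each fixed $\omega \in \hat{\Omega}$, the map $x \mapsto \exp(X^\mu(x,r))$ is the Radon--Nikodym density for a drift shift of the $P_r$-Brownian motion $W(\cdot,r)$; and the variational inequality \eqref{eq:IneqRelativeEntropy}, which I will apply to the natural test function $\Phi(x,r) := \log \E_\gamma\left[\exp(X^\mu(x,r))\right]$.

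For the upper bound $\Gamma \le I(\cdot|P)$, I would first note that $G^\mu(r)$ is a continuous centered Gaussian process on $[0,T]$ with covariance uniformly bounded by $\smax^2/\ls^2$, so its paths are $\gamma$-a.s.\ bounded on $[0,T]$; Novikov's condition then holds and $\E_{P_r}[\exp(X^\mu(\cdot,r))] = 1$ for $\gamma$-a.e.\ $\omega$. A Fubini argument in $\gamma \otimes \pi$ yields $\int_{\C\times D} e^{\Phi}\,dP = \E_\gamma\bigl[\int e^{X^\mu} dP\bigr] = 1$. Plugging the truncations $\Phi \vee (-n)$ into \eqref{eq:IneqRelativeEntropy} and letting $n \to \infty$ by monotone convergence gives $\Gamma(\mu) = \int \Phi\,d\mu \le I(\mu|P) + \log 1 = I(\mu|P)$. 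Well-definedness of $\Gamma$ in $\R \cup \{+\infty\}$ is handled in parallel: the $\mu$-integrability of $\Phi^-$ when $I(\mu|P) < \infty$ follows from the Jensen bound $\Phi \ge \E_\gamma[X^\mu]$ together with an auxiliary application of \eqref{eq:IneqRelativeEntropy} controlling $\int |\int_0^T m_\mu\,dW|\,d\mu$ by $I(\mu|P)$ plus a constant (using boundedness of $m_\mu$ and hence finiteness of the $P$-exponential moments of $\int_0^T m_\mu\,dW$).

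For the bilateral bound, I would apply \eqref{eq:IneqRelativeEntropy} to $\pm \Phi/\iota$ with $\iota \in \,]0,1[$. On the positive side, Jensen's inequality with the convex map $x \mapsto x^{1/\iota}$ gives $e^{\Phi/\iota} = \E_\gamma[e^{X^\mu}]^{1/\iota} \le \E_\gamma[e^{X^\mu/\iota}]$. The algebraic crux is the identity
\[
X^\mu/\iota = \Bigl[\tfrac{1}{\iota}\int_0^T \bigl(G^\mu_t(r)+m_\mu(t,r)\bigr)\,dW_t(x,r) - \tfrac{1}{2\iota^2}\int_0^T \bigl(G^\mu_t(r)+m_\mu(t,r)\bigr)^2 dt\Bigr] + \tfrac{1-\iota}{2\iota^2}\int_0^T \bigl(G^\mu_t(r)+m_\mu(t,r)\bigr)^2 dt,
\]
where the bracket is a Girsanov exponent of $P_r$-mean $1$. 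A Fubini-and-Girsanov reduction then collapses $\int \E_\gamma[e^{X^\mu/\iota}]\,dP$ to the Gaussian quadratic moment $\E_\gamma\bigl[\exp\bigl(\tfrac{1-\iota}{2\iota^2}\int_0^T(G^\mu_t(r)+m_\mu(t,r))^2 dt\bigr)\bigr]$, which is finite and uniformly bounded in $\mu$ and $r$ by a Fernique-type estimate whenever $(1-\iota)/\iota^2$ is sufficiently small --- that is, for $\iota$ close enough to $1$. This produces $\Gamma(\mu) \le \iota I(\mu|P) + e$. For the matching lower bound, I would apply \eqref{eq:IneqRelativeEntropy} to $-\Phi/\iota$ and invoke Jensen in the opposite direction, $\Phi \ge \E_\gamma[X^\mu]$, to obtain $e^{-\Phi/\iota} \le \exp(-\E_\gamma[X^\mu]/\iota)$; the $P$-expectation of the right-hand side is finite since $\int_0^T m_\mu(t,r)\,dW_t(x,r)$ is centered Gaussian under $P_r$ with variance bounded by $T\lVert J\rVert_\infty^2/\ls^2$, while the $K_\mu$ and $m_\mu^2$ terms in $\E_\gamma[X^\mu]$ are deterministic and bounded.

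The principal obstacle is twofold: the monotone-convergence justification of \eqref{eq:IneqRelativeEntropy} for the unbounded test function $\Phi$, and, on the positive side of the bilateral bound, the uniform-in-$\mu$ Fernique-type control of $\E_\gamma\bigl[\exp\bigl(c\int_0^T G^\mu_t(r)^2 dt\bigr)\bigr]$ --- this is what forces $\iota$ to be strictly less than $1$ and dictates how close to $1$ it must be chosen, in terms of $\smax$, $\lVert J\rVert_\infty$, $\ls$, and $T$.
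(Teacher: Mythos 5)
Your proposal is correct and follows essentially the same route as the paper: apply the relative-entropy inequality \eqref{eq:IneqRelativeEntropy} to (truncations of) $\Phi = \log\E_\gamma\big[\exp\{X^\mu\}\big]$ rescaled by $\alpha = 1/\iota$, use Jensen in both directions to compare with $\E_\gamma[X^\mu]$ and $\E_\gamma\big[\exp\{\alpha X^\mu\}\big]$, and reduce via Fubini and the martingale property to a Gaussian exponential-quadratic moment. The only small divergence is the final uniform bound on that moment, where you invoke a Fernique-type estimate whereas the paper applies Jensen over $dt/T$ to reduce to an elementary one-dimensional Gaussian computation with an explicit constant; both yield the required uniformity in $\mu$ and $r$ once $\iota$ is close enough to $1$.
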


\begin{proof}

The definition of $\Gamma$ is trivial when $I(\mu|P)=+\infty$. When $I(\mu|P)<+\infty$, we have $\mu \ll P$, and as $W(\cdot,r)$ is a $P_r$-Brownian motion, Girsanov's theorem ensures that the stochastic integral $\int_0^T \big(G^{\mu}_t(r)+m_{\mu}(t,r) \big)dW_t(x,r)$ is well defined $\gamma$-almost surely under $\mu$.\\
\noindent {\bf (1):}\\
Let $F_{\mu}:= \log \E_{\gamma} \Big[ \exp\big\{ X^{\mu}(x,r)\big\} \Big]$ denote the integrand in the formulation of $\Gamma$~\eqref{eq:Gamma}. It is measurable as a continuous function of the maps $(x,r) \to \big(K_{\mu}(t,s,r), 0 \leq t,s \leq T \big), \big(m_{\mu}(t,r), 0 \leq t \leq T \big), \big(W_t(x,r), 0 \leq t \leq T \big)$ that are continuous. Nevertheless, because of the contribution of the mean term $m_{\mu}$\footnote{see the expression of $\Gamma_2$ in Proposition~\ref{pro:DecompositionGamma}}, it is not bounded from below, as was the case in \cite{ben-arous-guionnet:95}. Let us prove that it is still $\mu$-integrable. In fact, for any $M>0$
\[
- F_{\mu}^{-}(x,r) \leq F_{\mu}^+(x,r)-F_{\mu}^-(x,r) = F_{\mu}(x,r) \leq \log\Big( \E_{\gamma} \big[ \exp\{X^{\mu}(x,r)\} \big] \vee M^{-1} \Big) =: F_{\mu,M}(x,r),
\]
where $F_{\mu}^+$ and $F_{\mu}^-$ respectively denote the positive and negative part of $F_{\mu}$.
As $F_{\mu}^-$ and $F_{\mu,M}$ are measurable and bounded from below, inequality \eqref{eq:IneqRelativeEntropy} applies. Let $\alpha \geq 1$. On the one hand
\begin{align}\label{ineq:GammaBehavior1}
\alpha \int_{\C \times D}& F_{\mu,M}(x,r) d\mu(x,r) \leq I(\mu|P)+ \log\bigg\{ \int_{\C \times D} \exp\Big\{\alpha F_{\mu,M}(x,r) \Big\}  dP(x,r) \bigg\} \nonumber \\
& \overset{\text{Jensen}}{\leq} I(\mu|P)+ \log\bigg\{ M^{-\alpha} +\int_{\C \times D} \E_{\gamma} \Big[ \exp\big\{ \alpha X^{\mu}(x,r)\big\} \Big] dP(x,r) \bigg\} \nonumber\\
& \overset{\text{Fubini}}{\leq} I(\mu|P)+ \log\bigg\{ M^{-\alpha} + \E_{\gamma} \bigg[ \int_{D} \int_{\C} \exp\Big\{\alpha X^{\mu}(x,r) \Big\} dP_r(x) d\pi(r) \bigg] \bigg\},
\end{align}
with the right-hand side of the two latter inequalities being possibly infinite. On the other hand,
\begin{align}\label{ineq:GammaBehavior2}
& \alpha \int_{\C \times D} F_{\mu}^-(x,r) d\mu(x,r) = \alpha \int_{\C \times D} \Big( - \log \E_{\gamma} \big[ \exp\big\{X^{\mu}(x,r)\big\} \big] \Big)^+ d\mu(x,r) \nonumber \\
& \overset{\text{Jensen}}{\leq} \alpha \int_{\C \times D} \Big( - \E_{\gamma} \big[ X^{\mu}(x,r) \big] \Big)^+ d\mu(x,r) = \int_{\C \times D} \Bigg( \E_{\gamma} \bigg[ - \int_0^T \big(G^{\mu}_t(r)+m_{\mu}(t,r)\big)dW_t(x,r) \nonumber\\
& -\frac{1}{2} \int_0^T \big(G^{\mu}_t(r)+m_{\mu}(t,r)\big)^2 dt \bigg] + \E_{\gamma} \bigg[\int_0^T \big(G^{\mu}_t(r)+m_{\mu}(t,r)\big)^2 dt \bigg]\Bigg)^+ d\mu(x,r) \nonumber\\
& \overset{\eqref{eq:IneqRelativeEntropy}}{\leq} I(\mu|P)+ \log\bigg\{ \int_{\C \times D} \exp\bigg\{ \alpha \bigg( \E_{\gamma}\big[X^{\mu}(x,r)\big] + T \frac{\Jmax^2 + \smax^2}{\lambda_*^2}\bigg)^+ \bigg\} dP(x,r) \bigg\} \nonumber\\
& \overset{\text{Jensen, Fubini}}{\leq} I(\mu|P) + \alpha C_T + \log\bigg\{ \E_{\gamma} \bigg[ \int_{D} \int_{\C} \exp\Big\{\alpha X^{\mu}(x,r) \Big\} dP_r(x) d\pi(r) \bigg] \bigg\}.
\end{align}

Moreover, $W(.,r)$ being a $P_r$-Brownian motion, the martingale property yields
\begin{equation}\label{ineq:GammaBehavior3}
\E_{\gamma} \bigg[ \int_{D} \int_{\C} \exp\Big\{\alpha X^{\mu}(x,r) \Big\} dP_r(x) d\pi(r) \bigg]\leq  \int_D \E_{\gamma} \bigg[ \exp\Big\{\frac{\alpha^2-\alpha}{2} \int_0^T \big(G^{\mu}_t(r) + m_{\mu}(t,r) \big)^2 dt \Big\}  \bigg] d\pi(r).
\end{equation}
Letting $\alpha=1$, we can see that $F_{\mu}$ is $\mu$-integrable, with
\begin{equation}\label{ineq:integrabilityGamma}
\int_{\C \times D} |F_{\mu}(x,r)| d\mu(x,r) = \int_{\C \times D} F_{\mu}^-(x,r)+F_{\mu,1}(x,r) d\mu(x,r) \leq 2 I(\mu|P)+ C_T+\log(2).
\end{equation}
Moreover,
\[
\Gamma(\mu) := \int_{\C \times D} F_{\mu}(x,r) d\mu(x,r) \overset{\eqref{ineq:GammaBehavior1}}{\leq} I(\mu|P) + \log\big\{ M^{-1}+1\big\},
\]
so that letting $M \to + \infty$ yields the result.

\noindent{\bf (2):}\\

For $\alpha \geq 1$, inequalities \eqref{ineq:GammaBehavior1}, \eqref{ineq:GammaBehavior2}, and \eqref{ineq:GammaBehavior3} ensure that
\begin{equation*}
\alpha |\Gamma(\mu)| \leq I(\mu|P)+ \alpha C_T + \bigg|\log\bigg\{\int_D \E_{\gamma} \bigg[ \exp\Big\{\frac{\alpha^2-\alpha}{2} \int_0^T \big(G^{\mu}_t(r) + m_{\mu}(t,r) \big)^2 dt \Big\}  \bigg] d\pi(r) \bigg\} \bigg|.
\end{equation*}

We recall that basic Gaussian calculus gives
\begin{equation*}\label{eq:GaussianExponentialQuadraticMoment}
\Exp \Big[ \exp\Big\{ \frac{1}{2} \mathcal{N}(m,v)^2 \Big\}\Big]=\frac{1}{\sqrt{1-v}}\exp\Big\{\frac{m^2}{2(1-v)} \Big\}=\exp\Big\{\frac{1}{2}\Big(\frac{m^2}{1-v} - \log(1-v) \Big) \Big\}
\end{equation*}
as soon as $v<1$. Jensen's inequality and Fubini theorem yield
\[
\E_{\gamma} \bigg[ \exp\Big\{\frac{(\alpha^2-\alpha)T}{2} \int_0^T \big(G^{\mu}_t(r) + m_{\mu}(t,r) \big)^2 \frac{dt}{T} \Big\} \bigg] \leq \int_0^T \E_{\gamma} \bigg[ \exp\Big\{\frac{(\alpha^2-\alpha)T}{2} \big(G^{\mu}_t(r) + m_{\mu}(t,r) \big)^2 \Big\}  \bigg] \frac{dt}{T}.
\]

As $\sqrt{(\alpha^2-\alpha)T}  \Big(G^{\mu}_t(r) + m_{\mu}(t,r) \Big) \sim \mathcal{N}\Big(\sqrt{(\alpha^2-\alpha)T} m_{\mu}(t,r) ,  (\alpha^2-\alpha)T K_{\mu}(t,t,r) \Big)$ under $\gamma$ then, for $(\alpha-1)$ small enough, exists a constant $C_T$ uniform in space such that
\begin{align*}
\E_{\gamma} \bigg[ \exp\Big\{\frac{(\alpha^2-\alpha)}{2} \int_0^T \big(G^{\mu}_t(r) + m_{\mu}(t,r) \big)^2 dt \Big\}  \bigg] & \leq \exp\Big\{(\alpha-1)C_T + \underbrace{o(\alpha-1)}_{\text{uniform in } r} \Big\}\\
& \leq \exp\Big\{(\alpha-1)C_T  \Big\}.
\end{align*}
This eventually yields
\begin{equation*}
|\Gamma(\mu)| \leq \iota I(\mu|P)+ e,
\end{equation*}
with $\iota:=\frac{1}{\alpha}$, and $e:=(2\alpha-1) C_T$.
\end{proof}

\begin{remark}
Remark that $\hat{\mu}_N \not\ll P$, $\Gamma(\hat{\mu}_N) = + \infty$ so that it is not equal to $\bar{\Gamma}(\hat{\mu}_N)$. These objects are different in nature, as the latter is random and must be considered under a proper probability measure on $\big(\C \times D \big)^N$, making sense of the stochastic integrals over the $\big(W_t(x^i,r_i), 0 \leq t \leq T \big)_{i \in \{ 1 \cdots N\}}$ (which are well defined under $P_{\mathbf{r}}$).
\end{remark}

As $\C \times D$ and $\M_1^+(\C\times D)$ are Polish spaces, and as the $(X^{i,N},r_i)$ are independent identically distributed random variables under $P^{\otimes N}$, Sanov's theorem ensures that the empirical measure satisfies, under this measure, a LDP with good rate function $I(.|P)$. Furthermore, if $\Gamma$ was bounded and continuous, Varadhan's lemma would, as a consequence of Lemma~\eqref{lemma2}, entail a full LDP under $Q^N$, with good rate function given by 

\begin{align*}
H (\mu) 
:=
\left\{
\begin{array}{cl}
I(\mu|P) - \Gamma(\mu) & \text{if } I(\mu|P) < \infty,\\
\infty & \text{otherwise }.
\end{array}
\right.
\end{align*}

At this point, it would be easy to conclude provided that $\Gamma$ presented a few regularity properties. Unfortunately, Varadhan's lemma assumptions fail here, as $\Gamma$ is neither continuous nor bounded from above. Obtaining a weak LDP as well as the convergence of the empirical measure requires to come back to the basics of large deviations theory.

Observe that $\Gamma$ is a nonlinear function of $\mu$, involving in particular an exponential term depending on the mean and covariance structure of the Gaussian process. In order to handle terms of this type, a key technique proposed by Ben Arous and Guionnet is to linearize this map by considering the terms in the exponential as depending on an additional variable $\nu \in \M_1^+(\C \times D)$~\cite{ben-arous-guionnet:95,guionnet:97}. In our case, this family of linearizations are given by the maps:
\begin{align*}
\Gamma_{\nu} := \mu \in \M_1^+\big(\C \times D \big) \to
\left\{
\begin{array}{cl}
{\int_{\C \times D} \log \E_{\gamma} \Big[ \exp\big\{ X^{\nu}(x,r) \big\}  \Big] d\mu(x,r)} & \text{if } I(\mu|P) < \infty,\\
+ \infty & \text{otherwise }.
\end{array}
\right.
\end{align*}
where $\mu, \nu \in \M_1^+(\C \times D)$.

\begin{remark}
Observe that Proposition~\ref{prop:GammaBehaviour} also applies to $\Gamma_{\nu}$ for every $\nu \in \M_1^+(\C \times D).$ Moreover, observe that $\Gamma(\mu)=\Gamma_{\mu}(\mu)$ for any $\mu \in \M_1^+(\C \times D)$.
\end{remark}

Moreover, defining
\[
\bar{\Gamma}_{\nu}(\delta_{(x,r)}):= \log \E_{\gamma} \Big[ \exp\big\{ X^{\nu}(x,r) \big\} \Big],\quad \quad \bar{\Gamma}_{\nu}(\hat{\mu}_N):= \frac{1}{N} \sum_{i=1}^N \log \E_{\gamma} \Big[ \exp\big\{ X^{\nu}(x^i,r_i)) \big\}  \Big],
\]
we note that $\bar{\Gamma}_{\nu}(\hat{\mu}_N)=\frac{1}{N} \sum_{i=1}^N \bar{\Gamma}_{\nu} (\delta_{(x^i,r_i)})$. Introducing $Q_{\nu} \in \M_1^+(\C \times D)$ by
\begin{equation}\label{def:Qnu}
dQ_{\nu}(x,r):= \exp\big\{ \bar{\Gamma}_{\nu}(\delta_{(x,r)}) \big\} dP(x,r)= \E_{\gamma} \Big[ \exp\big\{ X^{\nu}(x,r) \big\}  \Big] dP(x,r),
\end{equation}
we thus have
\[
dQ_{\nu}^{\otimes N}(\mathbf{x},\mathbf{r})=\exp\big\{ N \bar{\Gamma}_{\nu}(\hat{\mu}_N) \big\}dP^{\otimes N}(\mathbf{x},\mathbf{r}).
\]

This equality highlights the fact that, applying again Sanov's theorem, the empirical measure satisfies a full LDP under $\big(Q_{\nu} \big)^{\otimes N}$, with good rate function $I(.|Q_{\nu})$. On the other hand, Vardhan's lemma suggests that $\hat{\mu}_N$ satisfies, under the same measure, a LDP with rate function 
\begin{equation*}
H_{\nu} : \mu \to\\
\left\{
\begin{array}{ll}
I(\mu|P) - \Gamma_{\nu}(\mu)  & \text{if } I(\mu|P) < + \infty,\\
+ \infty & \text{otherwise. }
\end{array}
\right.
\end{equation*}
This is, for now, only a supposition, as its original counterpart $\Gamma_{\nu}$, is not bounded from above nor continuous, and as $\Gamma_{\nu}(\hat{\mu}_N)$ and $\bar{\Gamma}_{\nu}(\hat{\mu}_N)$ are not equal. Still, assuming the result is true, uniqueness of the good rate function would imply that $H_{\nu}$ equals $I(.|Q_{\nu})$. We shall justify the definition of $Q_{\nu}$, and proceed to the rigorous demonstration of the latter equality in Theorem~\ref{thm:Qnu}. 

We now introduce a very useful decomposition of $\Gamma_{\nu}$ based on Gaussian calculus (see \cite{ben-arous-guionnet:95,cabana-touboul:12,neveu:70}).
\begin{proposition} \label{pro:DecompositionGamma}
	For every $\nu \in \M_1^+(\C \times D)$, $\Gamma_{\nu}$ admits the following decomposition:
\begin{equation}\label{eq:DecompositionGamma}
\Gamma_{\nu}= \Gamma_{1,\nu} + \Gamma_{2,\nu},
\end{equation}
where $\forall \mu \in \M_1^+(\C \times D)$
\begin{align*}
\Gamma_{1,\nu}(\mu)& := -\frac{1}{2} \int_{\C \times D} \int_0^T \bigg\{ \widetilde{K}_{\nu,r}^t(t,t) +m_{\nu}(t,r)^2 \bigg\} dt d\mu(x,r),
\end{align*}
and
\begin{align*}
\Gamma_{2,\nu}(\mu) :=
\left\{
\begin{array}{cl}
{\frac{1}{2} \int_{\C \times D} \int_{\hat{\Omega}} L^{\nu}_T(x,r)^2 d\gamma_{\widetilde{K}_{\nu,r}^{T}} d\mu(x,r) + \int_{\C \times D} \int_0^T m_{\nu}(t,r)dW_t(x,r) d\mu(x,r)} & \text{if } I(\mu|P) < \infty,\\
+ \infty & \text{otherwise }.
\end{array}
\right.
\end{align*}
In particular, we have $\Gamma = \Gamma_1+\Gamma_2$ where $\Gamma_i(\mu):=\Gamma_{i,\mu}(\mu)$ for $i\in \{1,2\}$ and any $\mu \in \M_1^+(\C \times D)$.
\end{proposition}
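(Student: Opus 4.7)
The plan is to establish the decomposition by an explicit computation of $F_\nu(x,r) := \log \E_\gamma[\exp\{X^\nu(x,r)\}]$ pointwise in $(x,r)$, then integrate against $\mu$. The case $I(\mu|P)=+\infty$ is immediate by definition (both $\Gamma_\nu(\mu)$ and $\Gamma_{2,\nu}(\mu)$ are set to $+\infty$, and $\Gamma_{1,\nu}(\mu)$ is finite by the uniform bounds on $\widetilde{K}^t_{\nu,r}(t,t)$ and $m_\nu$ given in Proposition~\ref{LambdaProperties}), so I restrict attention to $\mu\ll P$.

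First I would expand the square $(G^{\nu}_t(r)+m_\nu(t,r))^2 = G^{\nu}_t(r)^2 + 2 G^{\nu}_t(r)\, m_\nu(t,r) + m_\nu(t,r)^2$ in the definition \eqref{def:Xmu} of $X^\nu$, and regroup terms to isolate the $\gamma$-deterministic part. Using the definition \eqref{LnuVnuDef} of $L^\nu_T$, this yields
\[
X^{\nu}(x,r) \;=\; \Bigl(\int_0^T m_\nu(t,r)\,dW_t(x,r) - \tfrac{1}{2}\int_0^T m_\nu(t,r)^2\,dt\Bigr) + L^\nu_T(x,r) - \tfrac{1}{2}\int_0^T G^{\nu}_t(r)^2\,dt.
\]
The first parenthesis is $\gamma$-deterministic and factors out of $\E_\gamma$, contributing directly to $\Gamma_{2,\nu}$ (the $m_\nu \,dW$ term) and to $\Gamma_{1,\nu}$ (the $m_\nu^2$ term) once we integrate against $\mu$.

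Second, I would handle the remaining $\gamma$-expectation via the tilted measure. By construction of $\gamma_{\widetilde{K}^T_{\nu,r}}$ and by identity \eqref{eq:KtildeExpectation},
\[
\E_\gamma\Bigl[\exp\bigl\{L^\nu_T(x,r) - \tfrac{1}{2}\textstyle\int_0^T G^{\nu}_t(r)^2\,dt\bigr\}\Bigr] = \exp\Bigl\{-\tfrac{1}{2}\textstyle\int_0^T \widetilde{K}^t_{\nu,r}(t,t)\,dt\Bigr\}\, \E_{\gamma_{\widetilde{K}^T_{\nu,r}}}\!\bigl[\exp\{L^\nu_T(x,r)\}\bigr].
\]
Under $\gamma_{\widetilde{K}^T_{\nu,r}}$ the process $G^\nu(r)$ is still a centered Gaussian process (by the Neveu formula invoked before the proposition), hence $L^\nu_T(x,r)$—being a linear functional of $G^\nu(r)$ with coefficients depending only on the frozen $(x,r)$—is a centered Gaussian random variable. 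The Gaussian moment generating function then yields $\E_{\gamma_{\widetilde{K}^T_{\nu,r}}}[\exp\{L^\nu_T(x,r)\}] = \exp\bigl\{\tfrac{1}{2}\int_{\hat\Omega} L^\nu_T(x,r)^2 \,d\gamma_{\widetilde{K}^T_{\nu,r}}\bigr\}$, which delivers the remaining piece of $\Gamma_{2,\nu}$. Taking the logarithm and integrating against $d\mu(x,r)$ gives $F_\nu \in L^1(\mu)$ thanks to \eqref{ineq:integrabilityGamma}, and the four terms regroup precisely into $\Gamma_{1,\nu}(\mu)+\Gamma_{2,\nu}(\mu)$.

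The main subtlety—rather than a genuine obstacle—is making rigorous sense of the objects appearing on the right-hand side. The stochastic integrals $\int_0^T m_\nu(t,r)\,dW_t(x,r)$ and $\int_0^T G^\nu_s(r)\,dW_s(x,r)$ are well-defined $\mu$-a.s.\ because $\mu\ll P$ entails that $W(\cdot,r)$ is a Brownian motion under $\mu(\cdot\mid r)$-conditioning, with $G^\nu(r)$ independent of $W(\cdot,r)$ under $\gamma\otimes\mu$; the relevant Fubini exchanges between $\E_\gamma$ and integration against $d\mu$ are justified by the uniform bounds on $m_\nu$, $K_\nu$ and $\widetilde{K}^t_{\nu,r}$ from Proposition~\ref{LambdaProperties}, exactly as in the corresponding step of~\cite{ben-arous-guionnet:95}. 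The final identity $\Gamma = \Gamma_1 + \Gamma_2$ follows by specializing $\nu = \mu$.
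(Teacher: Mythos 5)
The paper states this proposition without proof, delegating the ``Gaussian calculus'' to the cited references~\cite{ben-arous-guionnet:95,cabana-touboul:12,neveu:70}, and your argument correctly reconstructs the intended derivation: expand the square in $X^{\nu}$ to pull out the $\gamma$-deterministic terms, tilt by $\Lambda_T$ to bring in $\widetilde{K}^T_{\nu,r}$ via \eqref{eq:KtildeExpectation}, recognize $L^{\nu}_T(x,r)$ as a centered linear functional of the centered Gaussian process $G^{\nu}(r)$ under $\gamma_{\widetilde{K}^T_{\nu,r}}$, and apply the Gaussian moment generating function. Two minor precision remarks on the ``subtlety'' paragraph. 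First, $W(\cdot,r)$ is a $P_r$-Brownian motion, not a Brownian motion under the conditional law of $\mu$; the correct justification is simply that the Wiener integrals are defined $P_r$-a.s.\ and hence $\mu_r$-a.s.\ because $\mu\ll P$. Second, the stochastic integral $\int_0^T G^{\nu}_s(\omega,r)\,dW_s(x,r)$ is, for each fixed $\omega$, a Wiener integral defined up to a $P_r$-null set in $x$, so the claim that $L^{\nu}_T(x,r)$ is a Gaussian functional of $G^{\nu}(r)$ ``for frozen $(x,r)$'' implicitly requires a jointly measurable version of this family of integrals (a standard but nontrivial point, handled in the references you cite); flagging this explicitly, as you do, is appropriate rather than evasive.
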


This decomposition has the interest of splitting the difficulties: while the first term will be relatively easy to handle (see Proposition~\ref{prop:MeanAndVarRegularity}), the local martingale term will require finer estimates based on Gaussian calculus and a number of tools from stochastic calculus theory. It is also useful to state the following lemma, central for our analysis (see \cite[Lemma 5.15]{ben-arous-guionnet:95}):

\begin{lemma}\label{lemma5.15}
For any $r \in D$, the following equality holds $P_r$ almost surely
\begin{align}\label{eq:QnurDensity}
\E_{\gamma} \Big[ \exp\big\{ X^{\nu}(x,r) \big\}\Big]= \exp\bigg\{\int_0^T O_{\nu}(t,x,r)dW_t(x,r)-\frac{1}{2}\int_0^T O_{\nu}^2(t,x,r)dt\bigg\}
\end{align}
where 
\begin{align*}
O_{\nu}(t,x,r)& := \E_{\gamma} \bigg[ \Lambda_t\big( G^{\nu}(r) \big) G^{\nu}_t(r) L^{\nu}_t(x,r) \bigg]+ m_{\nu}(t,r).
\end{align*}
\end{lemma}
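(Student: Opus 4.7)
The plan is to view the left-hand side of the identity as a function of the upper limit $t\in[0,T]$ and show that this function $M_t(x,r)$ solves the linear SDE $dM_t = O_{\nu}(t,x,r)M_t\, dW_t(x,r)$ with $M_0=1$; by uniqueness, $M_t$ must then coincide with the Dol\'eans-Dade exponential on the right-hand side, and setting $t=T$ yields the lemma. Concretely, I would set
\[
M_t(x,r) := \E_{\gamma}\Big[\exp\Big\{\int_0^t (G^{\nu}_s(r)+m_{\nu}(s,r))\, dW_s(x,r) - \tfrac{1}{2}\int_0^t (G^{\nu}_s(r)+m_{\nu}(s,r))^2\, ds\Big\}\Big].
\]
For fixed $\omega\in\hat{\Omega}$, It\^o's formula applied pathwise to the integrand $Y_t(\omega)$ gives $Y_t = 1 + \int_0^t Y_s\bigl(G^{\nu}_s(\omega,r)+m_{\nu}(s,r)\bigr)\,dW_s(x,r)$. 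The covariance bounds of Proposition~\ref{LambdaProperties} and the boundedness of $m_{\nu}$ supply $L^2(\gamma\otimes ds)$-control of the integrand uniformly in $(x,r)$, so a stochastic Fubini theorem permits exchanging $\E_{\gamma}$ with the It\^o integral, yielding
\[
M_t \,=\, 1 + \int_0^t \bigl(\E_{\gamma}[Y_s G^{\nu}_s(r)] + m_{\nu}(s,r)\,M_s\bigr)\, dW_s(x,r).
\]

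The heart of the argument is then the identification $\E_{\gamma}[Y_s G^{\nu}_s(r)] = \E_{\gamma}\bigl[\Lambda_s(G^{\nu}(r))\,G^{\nu}_s(r)\,L^{\nu}_s(x,r)\bigr]\, M_s$. To establish it, I would factor out the $\omega$-independent piece $E_s(x,r) := \exp\{\int_0^s m_{\nu}\,dW_u - \tfrac{1}{2}\int_0^s m_{\nu}^2\, du\}$ from $Y_s$, so that the residual $\gamma$-expectation carries $G^{\nu}_s(r)\exp\{L^{\nu}_s(x,r) - \tfrac12\int_0^s G^{\nu}_u(r)^2\,du\}$. The change of measure $d\gamma_{\widetilde{K}_{\nu,r}^{s}} = \Lambda_s(G^{\nu}(r))\,d\gamma$ absorbs the quadratic exponential, and under $\gamma_{\widetilde{K}_{\nu,r}^{s}}$ the process $G^{\nu}(r)$ remains a centered Gaussian process (with covariance $\widetilde{K}_{\nu,r}^{s}$), so $(G^{\nu}_s(r),\, L^{\nu}_s(x,r))$ is a bivariate mean-zero Gaussian (the vanishing mean of $L^{\nu}_s$ following from a second stochastic Fubini applied to its linear representation in $G^{\nu}(r)$). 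Applying the elementary identity $\Exp[X e^Y] = \mathrm{Cov}(X,Y)\,\Exp[e^Y]$ valid for such pairs collapses the expectation, and recognizing $E_s\cdot\E_{\gamma}[\exp\{L^{\nu}_s - \tfrac12\int_0^s G^{\nu}_u(r)^2\, du\}] = M_s$ completes the identification.

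Substituting back produces $M_t = 1 + \int_0^t O_{\nu}(s,x,r)\,M_s\,dW_s(x,r)$. Since $O_{\nu}$ is bounded in $(s,x,r)$ by \eqref{ineq:BoundOnLambda} together with the bound on $m_{\nu}$, this linear integral equation admits a unique solution, namely the stochastic exponential $\exp\{\int_0^t O_{\nu}\, dW_s - \tfrac{1}{2}\int_0^t O_{\nu}^2\, ds\}$; evaluation at $t=T$ delivers the claim. The main technical obstacle I anticipate is the rigorous justification of the two stochastic Fubini interchanges above: both require joint measurability in $(s,\omega)$ and uniform $L^2$-control of the integrands, which I would derive from the continuity in $t$ of a $\gamma$-version of $G^{\nu}(r)$ and the exponential-moment bounds of Proposition~\ref{LambdaProperties}.
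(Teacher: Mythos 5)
Your overall strategy — treating the left-hand side as a function $M_t(x,r)$ of the upper limit and showing it solves the linear SDE $dM_t=O_\nu(t,x,r)M_t\,dW_t(x,r)$ — is the standard and correct route; it is essentially the argument underlying Ben Arous–Guionnet's Lemma~5.15, which the paper cites in lieu of a proof. The core computation is sound: factoring out $E_s=\exp\{\int_0^s m_\nu\,dW - \tfrac12\int_0^s m_\nu^2\}$, recognizing $\exp\{-\tfrac12\int_0^s G^{\nu 2}_u\,du\}$ as the unnormalized change of measure to $\gamma_{\widetilde K^s_{\nu,r}}$, using that $(G^\nu_s,L^\nu_s)$ remains a centered jointly Gaussian pair under the tilted Gaussian law (Neveu), and applying $\Exp[Xe^Y]=\mathrm{Cov}(X,Y)\,\Exp[e^Y]$ correctly identifies $\E_\gamma[Y_sG^\nu_s(r)]=\E_\gamma[\Lambda_s(G^\nu(r))G^\nu_s(r)L^\nu_s(x,r)]\,M_s$.

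One assertion is genuinely wrong and should be repaired: $O_\nu$ is \emph{not} bounded in $(s,x,r)$. The Gaussian expectation only averages over $\hat\Omega$, and $L^\nu_t(x,r)=\int_0^t G^\nu_u(r)\,dV^\nu_u(x,r)$ still carries the path-dependence on $x$; for fixed $r$ the map $x\mapsto O_\nu(t,x,r)$ is an unbounded adapted functional. The bounds \eqref{ineq:BoundOnLambda} only give $|O_\nu(t,x,r)|\leq C_T\,\E_\gamma[L^\nu_t(x,r)^2]^{1/2}+\Jmax/\ls$, which is finite $P_r$-a.s. and square integrable in $t$ but not uniformly bounded. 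Fortunately the conclusion survives: uniqueness for the linear SDE $dM_t=\alpha_t M_t\,dW_t$ with $M_0=1$ holds for any adapted $\alpha$ with $\int_0^T\alpha_t^2\,dt<\infty$ a.s.\ (compute $d(M_t/\mathcal E_t)$ via It\^o), so the identification with the stochastic exponential goes through without boundedness. You should replace ``bounded in $(s,x,r)$'' with this $P_r$-a.s.\ square-integrability. The stochastic Fubini interchanges do need the joint-measurability and $L^2(\gamma\otimes ds)$ control you flag; note that since $\E_\gamma[Y_s^2(G^\nu_s+m_\nu)^2]$ involves $\exp\{\int_0^s(G^\nu_u+m_\nu)^2du\}$ this control is not uniform in $T$ without further argument (e.g.\ a localization over small time increments followed by iteration), so a careful write-up should spell out that localization rather than appealing directly to Proposition~\ref{LambdaProperties} over $[0,T]$.
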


We are now ready to state one of the main result of the chapter which proves the intuitive equality between the two rate functions $I(\cdot|Q_{\nu})$ and $H_{\nu}$.

\begin{theorem}\label{thm:Qnu}
$Q_{\nu}$ is a well defined probability measure on $\M_1^+(\C \times D)$, and $H_{\nu}(\mu)=I(\mu|Q_{\nu})$. In particular $H_{\nu}$ is a good rate function.
\end{theorem}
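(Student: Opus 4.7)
The plan is to proceed in three stages: first verify that $Q_\nu$ defines a probability measure on $\C \times D$; then identify $H_\nu$ with $I(\,\cdot\,|Q_\nu)$ via a chain-rule computation; and finally deduce the good rate function property from standard facts about relative entropy.

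To show $Q_\nu$ is a probability measure, I would fix $r \in D$ and apply Fubini to interchange $\E_{\gamma}$ and the integral against $P_r$. For $\gamma$-almost every realization of $G^\nu(r)$, the integrand $t \mapsto G^\nu_t(r) + m_\nu(t,r)$ is continuous on $[0,T]$ and hence bounded, so $\exp\{X^\nu(\cdot,r)\}$ is a Doléans--Dade exponential of the $P_r$-Brownian motion $W(\cdot,r)$ whose integrand trivially satisfies Novikov's condition. Therefore $\int_\C \exp\{X^\nu(x,r)\}\,dP_r(x) = 1$ $\gamma$-almost surely, Fubini yields $\int_\C \E_{\gamma}[\exp\{X^\nu(\cdot,r)\}]\,dP_r(x) = 1$ for every $r$, and integration against $\pi$ gives $Q_\nu(\C \times D) = 1$.

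The heart of the proof is the entropy identity. By Lemma~\ref{lemma5.15} the density $dQ_\nu/dP$ is strictly positive $P$-almost surely, so $Q_\nu \simeq P$. When $I(\mu|P) < \infty$, $\mu \ll P \simeq Q_\nu$, the Radon--Nikodym chain rule applies, and $\log(dQ_\nu/dP) = \bar{\Gamma}_\nu(\delta_{(\cdot,\cdot)})$ is $\mu$-integrable by estimate~\eqref{ineq:integrabilityGamma} of Proposition~\ref{prop:GammaBehaviour}, yielding
$$
I(\mu|Q_\nu) = \int \log\frac{d\mu}{dP}\,d\mu - \int \log\frac{dQ_\nu}{dP}\,d\mu = I(\mu|P) - \Gamma_\nu(\mu) = H_\nu(\mu).
$$
For $I(\mu|P) = +\infty$: if $\mu \not\ll P$, then equivalence forces $\mu \not\ll Q_\nu$ and hence $I(\mu|Q_\nu) = +\infty = H_\nu(\mu)$; if $\mu \ll P$, I would approximate $\mu$ by a sequence $(\mu_n)$ with $I(\mu_n|P) < \infty$ and $I(\mu_n|P) \to +\infty$, apply the sublinear control $|\Gamma_\nu(\mu_n)| \leq \iota I(\mu_n|P) + e$ with $\iota < 1$ from Proposition~\ref{prop:GammaBehaviour}(2) to get $I(\mu_n|Q_\nu) \geq (1-\iota)I(\mu_n|P) - e \to +\infty$, and conclude $I(\mu|Q_\nu) = +\infty$ by lower semicontinuity.

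Since $\C \times D$ is Polish, relative entropy $I(\,\cdot\,|Q_\nu)$ is automatically lower semicontinuous with compact sub-level sets in the weak topology, so $H_\nu$ inherits the good rate function property from the identity just established. I expect the principal obstacle to be the degenerate regime $I(\mu|P) = +\infty$ with $\mu \ll P$: the chain-rule decomposition threatens an $\infty - \infty$ indeterminacy that cannot be resolved directly, and the resolution requires the approximation argument together with the strict inequality $\iota < 1$ of Proposition~\ref{prop:GammaBehaviour}(2)---it is precisely this constraint that makes the lower bound on $I(\mu_n|Q_\nu)$ diverge and closes the argument.
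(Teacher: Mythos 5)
Your argument for showing that $Q_\nu$ is a probability measure and for the identity in the case $I(\mu|P)<\infty$ is correct, and in fact somewhat more direct than the paper's. The paper routes the chain rule through an intermediate measure $\bar{Q}_\nu$ (the change of drift by $m_\nu$ alone), establishes $I(Q_\nu|\bar{Q}_\nu)<\infty$ by a Gronwall argument, and then invokes \cite[Appendix B]{ben-arous-guionnet:95}, which handles the entropy identity for densities bounded away from zero. You instead do a single-step chain rule and justify splitting the integral by the $\mu$-integrability of $\log(dQ_\nu/dP)$, which is exactly estimate~\eqref{ineq:integrabilityGamma} of Proposition~\ref{prop:GammaBehaviour} applied to $\Gamma_\nu$ (licensed by the remark following it). That part of your proof is sound and arguably simpler.

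The gap is in the regime $\mu\ll P$ with $I(\mu|P)=+\infty$. Your plan is to take a sequence $\mu_n$ with $I(\mu_n|P)<\infty$, $I(\mu_n|P)\to\infty$, deduce $I(\mu_n|Q_\nu)\geq(1-\iota)I(\mu_n|P)-e\to\infty$, and then ``conclude $I(\mu|Q_\nu)=+\infty$ by lower semicontinuity.'' This last step is wrong: weak lower semicontinuity of $I(\cdot|Q_\nu)$ gives $I(\mu|Q_\nu)\leq\liminf_n I(\mu_n|Q_\nu)$, which is the opposite of what you need --- the fact that $I(\mu_n|Q_\nu)\to\infty$ tells you nothing about $I(\mu|Q_\nu)$ if $\mu_n\to\mu$ weakly. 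There is no upper semicontinuity of relative entropy that would close this. To make the approximation strategy work you would need a very specific sequence for which $I(\mu_n|Q_\nu)\to I(\mu|Q_\nu)$ --- for instance the conditional truncations $d\mu_n:=\mathbf{1}_{\{d\mu/dP\leq n\}}\,d\mu/\mu(\{d\mu/dP\leq n\})$, for which one can verify both $I(\mu_n|P)\to I(\mu|P)$ and $I(\mu_n|Q_\nu)\to I(\mu|Q_\nu)$ by monotone/dominated convergence of the positive and negative parts of the log-density --- but this is not lower semicontinuity, and you did not identify such a sequence. The paper circumvents this entirely: factoring through $\bar{Q}_\nu$, the density $dQ_\nu/d\bar{Q}_\nu$ is bounded below by $\exp\{-C_T\}$, and together with $I(Q_\nu|\bar{Q}_\nu)<\infty$ this is exactly the hypothesis under which the Appendix B argument of Ben Arous--Guionnet yields $\{I(\cdot|Q_\nu)<\infty\}=\{I(\cdot|\bar{Q}_\nu)<\infty\}$ and the identity, after which the separate comparison $\{I(\cdot|P)<\infty\}=\{I(\cdot|\bar{Q}_\nu)<\infty\}$ is established directly. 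You should either supply the truncation argument with the convergence of entropies justified, or follow the paper's decomposition.
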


\begin{proof}
We recall that
\[
dQ_{\nu}(x,r) = \E_{\gamma} \Big[ \exp\big\{ X^{\nu}(x,r) \big\}\Big] dP(x,r),
\]
and further define for any $r \in D$
\[
dQ_{\nu,r}(r) = \E_{\gamma} \Big[ \exp\big\{ X^{\nu}(x,r) \big\}\Big] dP_r(x).
\]
It is clear that $Q_{\nu}$ and $Q_{\nu,r}$ are positive measure of $\C \times D$ and $\C$ respectively. Moreover, Novikov's criterion ensures that 
\[
\Bigg(\exp\bigg\{ \int_0^t \big( G^{\nu}_s(r)+m_{\nu}(s,t) \big)dW_s(x,r) - \frac{1}{2} \int_0^t  \big( G^{\nu}_s(r)+m_{\nu}(s,t) \big)^2 ds \bigg\}\Bigg)_{0 \leq t\leq T}
\]
is a uniformly integrable $P_r$-martingale $\gamma$-a.s. so that Jensen inequality ensures that $Q_{\nu}(\C \times D)=Q_{\nu, r}(\C)=1$. Hence, $Q_{\nu}$ and $Q_{\nu,r}$ are probability measures satisfying $dQ_{\nu}(x,r)=dQ_{\nu,r}(x)d\pi(r)$.

Fix $r \in D$, and define the probability measure $\bar{Q}_{\nu,r} \in \M_1^+(\C)$ by:
\begin{align*}
\der{\bar{Q}_{\nu,r}}{P_r}(x) := \exp\bigg\{ \int_0^T m_{\nu}(t,r) dW_t(x,r) - \frac{1}{2} \int_0^T m_{\nu}(t,r)^2 dt\bigg\}
\end{align*}
for which Novikov's criterion holds by boundedness of $m_{\nu}$. By Girsanov's theorem, $V^{\nu}(\cdot,r)$ (defined in \eqref{LnuVnuDef}) is a $\bar{Q}_{\nu,r}$-Brownian motion, and we can use Novikov's criterion again to check that:
\begin{align*}
\der{P_r}{\bar{Q}_{\nu,r}}(x) := \exp\bigg\{ -\int_0^T m_{\nu}(t,r) dV^{\nu}_t(x,r) - \frac{1}{2} \int_0^T m_{\nu}(t,r)^2 dt\bigg\},
\end{align*}
implying $\bar{Q}_{\nu,r} \simeq P_r$. Moreover, let $\bar{Q}_{\nu} \in \M_1^+(\C \times D)$ be such that $d\bar{Q}_{\nu}(x,r) = d\bar{Q}_{\nu,r}(x) d\pi(r)$. Then $\bar{Q}_{\nu} \simeq P$, and by the previous lemma $Q_{\nu} \ll \bar{Q}_{\nu}$ with density:
\begin{align*}
&\der{Q_{\nu}}{\bar{Q}_{\nu}}(x,r) = \E_{\gamma} \Bigg[ \exp\bigg\{\int_0^T G^{\nu}_t(r) dV^{\nu}_t(x,r)-\frac{1}{2}\int_0^T G^{\nu}_t(r)^2 dt\bigg\} \Bigg]\\
& \overset{\eqref{eq:QnurDensity}}{=} \exp\bigg\{\int_0^T \E_{\gamma} \Big[ \Lambda_t\big( G^{\nu}(r) \big) G^{\nu}_t(r) L^{\nu}_t(x,r) \Big]dV^{\nu}_t(x,r)-\frac{1}{2}\int_0^T \E_{\gamma} \Big[ \Lambda_t\big( G^{\nu}(r) \big) G^{\nu}_t(r) L^{\nu}_t(x,r) \Big]^2 dt\bigg\}\\
& \overset{\eqref{eq:DecompositionGamma}}{=} \exp\bigg\{\frac{1}{2} \int_{\hat{\Omega}} L^{\nu}_T(x,r)^2 d\gamma_{\widetilde{K}_{\nu,r}^T}  -\frac{1}{2} \int_0^T \widetilde{K}_{\nu,r}^t(t,t) dt \bigg\} \overset{\eqref{ineq:BoundOnLambda}}{\geq} \exp\big\{ -C_T\big\}>0.
\end{align*}
We will first prove that $I(Q_{\nu,r}|\bar{Q}_{\nu,r})$ is finite. This will bring, by applying the exact same reasoning as in \cite[Appendix B]{ben-arous-guionnet:95}, the equality:
\begin{equation*}
\forall \mu \in \M_1^+(\C \times D), \quad \quad \bar{H}_{\nu}(\mu) = I(\mu|Q_{\nu}),
\end{equation*} 
where
\begin{equation*}
\bar{H}_{\nu} : \mu \to\\
\left\{
\begin{array}{ll}
I(\mu|\bar{Q}_{\nu}) - \int_{\C \times D} \log\Big( \der{Q_{\nu}}{\bar{Q}_{\nu}}(x,r) \Big) d\mu(x,r)  & \text{if } I(\mu|\bar{Q}_{\nu}) < + \infty,\\
+ \infty & \text{otherwise. }
\end{array}
\right.
\end{equation*}
We will then prove that for every $\mu \in \M_1^+(\C \times D)$,
\begin{equation}\label{eq:QnuEq}
\bar{H}_{\nu}(\mu)=H_{\nu}(\mu),
\end{equation}
which will conclude the proof.\\
For the first point, observe that Girsanov's theorem ensures that the process $\Big(B^{\nu}_t(\cdot,r):=V^{\nu}_t(\cdot,r)- \int_0^t \E_{\gamma} \Big[ \Lambda_s\big( G^{\nu}(r) \big) G^{\nu}_s(r) L^{\nu}_s(\cdot,r) \Big]ds \Big)_{0 \leq t \leq T}$ is a $Q_{\nu,r}$-Brownian motion, so that
\begin{align*}
& I(Q_{\nu,r}|\bar{Q}_{\nu,r}) \\
& = \int_{\C} \bigg\{\int_0^T \E_{\gamma} \Big[ \Lambda_t\big( G^{\nu}(r) \big) G^{\nu}_t(r) L^{\nu}_t(x,r) \Big]dV^{\nu}_t(x,r)-\frac{1}{2}\int_0^T \E_{\gamma} \Big[ \Lambda_t\big( G^{\nu}(r) \big) G^{\nu}_t(r) L^{\nu}_t(x,r) \Big]^2 dt\bigg\} dQ_{\nu,r}(x) \\
& = \int_{\C} \bigg\{\int_0^T \E_{\gamma} \Big[ \Lambda_t\big( G^{\nu}(r) \big) G^{\nu}_t(r) L^{\nu}_t(x,r) \Big]dB^{\nu}_t(x,r)+\frac{1}{2}\int_0^T \E_{\gamma} \Big[ \Lambda_t\big( G^{\nu}(r) \big) G^{\nu}_t(r) L^{\nu}_t(x,r) \Big]^2dt\bigg\} dQ_{\nu,r}(x) \\
& = \frac{1}{2} \int_0^T \underbrace{\int_{\C} \E_{\gamma} \bigg[\Lambda_t\big(G^{\nu}(r)\big) G^{\nu}_t(r) \bigg(\int_0^t G^{\nu}_s(r) dV^{\nu}_s(x,r) \bigg) \bigg]^2 dQ_{\nu,r}(x) }_{\varphi_{\nu}(t,r)} dt.
\end{align*}

We now intend to bound $\varphi_{\nu}(t,r)$ uniformly in order to obtain the result.
\begin{align*}
& \varphi(t,r) \leq \int_{\C}  \Bigg\{ \E_{\gamma} \bigg[\Lambda_t\big(G^{\nu}(r)\big) G^{\nu}_t(r) \bigg(\int_0^t G^{\nu}_s(r) dB^{\nu}_s(x,r)\bigg) \bigg]^2\\
& +  \bigg(\int_0^t \widetilde{K}^t_{\nu,r}(t,s) \E_{\gamma} \Big[ \Lambda_s\big( G^{\nu}(r) \big) G^{\nu}_s(r) L^{\nu}_s(x,r) \Big] ds  \bigg)^2 \Bigg\} dQ_{\nu,r}(x)\\
& \overset{\mathrm{C.S.}, \eqref{ineq:BoundOnLambda}}{\leq}  C_T \int_{\C} \Bigg\{ \widetilde{K}^t_{\nu,r}(t,t) \E_{\gamma} \bigg[\bigg(\int_0^t G^{\nu}_s(r) dB^{\nu}_s(x,r)\bigg)^2 \bigg]\\
& + \int_0^t \widetilde{K}^t_{\nu,r}(t,s)^2 \E_{\gamma} \Big[ \Lambda_s\big( G^{\nu}(r) \big) G^{\nu}_s(r) L^{\nu}_s(x,r) \Big]^2ds \Bigg\}dQ_{\nu,r}(x) \\
& \overset{\text{Fubini}, \eqref{ineq:BoundOnLambda}}{\leq} C_T \Bigg\{\E_{\gamma} \bigg[ \int_{\C} \int_0^t G^{\nu}_s(r)^2 ds dQ_{\nu,r}(x) \bigg] + \int_0^t \varphi(s,r) ds\Bigg\} \overset{\text{Fubini}, \eqref{ineq:BoundOnLambda}}{\leq} C_T \bigg\{1 + \int_0^t \varphi(s,r) ds\bigg\},
\end{align*}
where we have used It\^o isometry, and where $C_T$ is uniform in space. 
Relying on Gronwall's lemma, we find that $\varphi(t,r)$ is uniformly bounded in space:
\begin{equation*}
\sup_{0 \leq t \leq T} \varphi_{\nu}(t,r) \leq  C_T\exp{C_T}.
\end{equation*}
This implies that exists a finite constant $\tilde{C}_T$, uniform in space, such that $I(Q_{\nu,r}|\bar{Q}_{\nu,r}) \leq \tilde{C}_T$.

Moreover, $I(Q_{\nu,r}|\bar{Q}_{\nu,r})$ is positive and $\pi-$measurable. We can thus integrate on $D$ to find:
\begin{equation*}
I(Q_{\nu}|\bar{Q}_{\nu}) = \int_{D} I(Q_{\nu,r}|\bar{Q}_{\nu,r}) d\pi(r) \leq \tilde{C}_T < \infty.
\end{equation*}
Remark that the proof of Proposition~\ref{prop:GammaBehaviour} readily applies to show that exists $0<\iota<1$ and $e>0$ such that
\[
\int_{\C \times D} \log\Big( \der{Q_{\nu}}{\bar{Q}_{\nu}}(x,r) \Big) d\mu(x,r) \leq \iota I(\mu|\bar{Q}_{\nu}) +e.
\]
In particular, $\bar{H}_{\nu}$ is finite whenever $I(\cdot|\bar{Q}_{\nu})$ is. Moreover, we can directly apply \cite[Appendix B]{ben-arous-guionnet:95}, to obtain:
\begin{equation*}
\forall \mu \in \M_1^+(\C \times D),  \; \; \bar{H}_{\nu}(\mu) = I(\mu|Q_{\nu}).
\end{equation*} 
We now show that equation \eqref{eq:QnuEq} holds. If both $I(\mu|P)$ and $I(\mu|\bar{Q}_{\nu})$ are infinite the results is clear. We handle the other cases by remarking that $\{ I(\cdot|P) < +\infty \} = \{ I(\cdot|\bar{Q}_{\nu}) < +\infty \}$. In fact, let $\mu \ll P \simeq \bar{Q}_{\mu}$, suppose that $I(\mu|\bar{Q}_{\nu})$ is finite, and observe that
\begin{align*}
I(\mu|P) & = \int_{\C \times D} \bigg\{ \log \Big( \der{\mu}{\bar{Q}_{\nu}}(x,r) \Big)+\log\Big( \der{\bar{Q}_{\nu}}{P}(x,r) \Big) \bigg\} d\mu(x,r).
\end{align*}
On the one hand, $\log \Big( \der{\mu}{\bar{Q}_{\nu}} \Big)$ is $\mu$-integrable as
\begin{align*}
\int_{\C \times D} \Big|\log \Big( \der{\mu}{\bar{Q}_{\nu}}(x,r) \Big) \Big| d\mu(x,r)= \int_{\C \times D} \Big|\log \Big( \der{\mu}{\bar{Q}_{\nu}}(x,r) \Big) \der{\mu}{\bar{Q}_{\nu}}(x,r) \Big| d\bar{Q}_{\nu}(x,r),
\end{align*}
as $\forall x \in \R_+$, $x\log(x) \geq -\frac{1}{\exp(1)}$, and as $I(\mu|\bar{Q}_{\nu})< + \infty$. Let us show that $\log\Big( \der{\bar{Q}_{\nu}}{P} \Big)$ is also $\mu$-integrable. In fact,
\begin{align*}
\int_{\C \times D} & \log \Big(\der{\bar{Q}_{\nu}}{P}(x,r) \Big)^- d\mu(x,r) = \int_{\C \times D} \log \Big( 1 \vee \der{P}{\bar{Q}_{\nu}}(x,r) \Big) d\mu(x,r) \\
& \overset{\eqref{eq:IneqRelativeEntropy}}{\leq} I(\mu|\bar{Q}_{\nu}) + \log \bigg(1+ \int_{\C \times D} \der{P}{\bar{Q}_{\nu}}(x,r) d\bar{Q}_{\nu}(x,r)\bigg) \leq  I(\mu|\bar{Q}_{\nu}) + \log(2)<+\infty,
\end{align*}
whereas
\begin{align*}
& \int_{\C \times D} \log \Big(\der{\bar{Q}_{\nu}}{P}(x,r) \Big)^+ d\mu(x,r) \overset{\eqref{eq:IneqRelativeEntropy}}{\leq} I(\mu|\bar{Q}_{\nu}) + \log \bigg(1+ \int_{\C \times D} \der{\bar{Q}_{\nu}}{P}(x,r) d\bar{Q}_{\nu}\bigg)\\
& \leq I(\mu|\bar{Q}_{\nu}) + \log \bigg(1+ \int_{D} \int_{\C} \exp\bigg\{ \int_0^T m_{\nu}(t,r) dV^{\nu}_t(x,r) - \frac{1}{2} \int_0^T m_{\nu}(t,r)^2 dt\bigg\} d\bar{Q}_{\nu,r}(x) e^{ \int_0^T m_{\nu}(t,r)^2 dt} d\pi(r)\bigg)\\
& \leq I(\mu|\bar{Q}_{\nu}) + \log \bigg(1+ \exp\Big\{ \frac{T \Jmax^2}{2 \ls^2} \Big\}\bigg)<+\infty.
\end{align*}
Hence, $I(\mu|\bar{Q}_{\nu})< + \infty$ implies that
\[
I(\mu|P) = I(\mu|\bar{Q}_{\nu}) + \int_{\C \times D} \log\Big( \der{\bar{Q}_{\nu}}{P}(x,r) \Big) d\mu(x,r) < \infty,
\]
and by symmetry, $I(\mu|P)<\infty$ implies finiteness of $I(\mu|\bar{Q}_{\nu})$ with same equality. Moreover, we can apply a similar reasoning as in the proof of Proposition~\ref{prop:GammaBehaviour} to show that exists constants $0<\iota<1$ and $e>0$ such that
\[
\Big|\int_{\C \times D} \log\Big( \der{\bar{Q}_{\nu}}{P}(x,r) \Big) d\mu(x,r) \Big| \leq \iota I(\mu|\bar{Q}_{\nu})+e,
\]
and
\[
|\Gamma_{\nu}(\mu)| \leq \iota I(\mu|P)+e.
\]
Hence, for $\mu \in \{ I(\cdot|P)<+\infty\}=\{I(\cdot|\bar{Q}_{\nu})<+\infty\}$ these quantities are finite. Moreover:
\[
\Gamma_{\nu}(\mu) =  \int_{\C \times D} \bigg\{ \log \Big( \der{Q_{\nu}}{\bar{Q}_{\nu}}(x,r) \Big)+ \log \Big(\der{\bar{Q}_{\nu}}{P}(x,r) \Big) \bigg\} d\mu(x,r)
\]
and we can split this integral as $\der{Q_{\nu}}{\bar{Q}_{\nu}}$ is bounded away from $0$ and $\log \Big(\der{\bar{Q}_{\nu}}{P} \Big)$ is $\mu$-integrable to obtain:
\begin{align*}
\Gamma_{\nu}(\mu) = \int_{\C \times D} \log \Big( \der{Q_{\nu}}{\bar{Q}_{\nu}}(x,r) \Big) d\mu(x,r) + \int_{\C \times D} \log \Big(\der{\bar{Q}_{\nu}}{P}(x,r) \Big) d\mu(x,r).
\end{align*}
As a consequence, for any $\mu \in \{ I(\cdot|P)<+\infty\}$, we have
\begin{align*}
I(\mu|P) -\Gamma_{\nu}(\mu) =I(\mu|\bar{Q}_{\nu}) - \int_{\C \times D} \log \Big( \der{Q_{\nu}}{\bar{Q}_{\nu}}(x,r) \Big) d\mu(x,r),
\end{align*}
which concludes the proof.
\end{proof}

We have thus proved that $H_{\nu}$ is a good rate function, and would like to extend this property to $H$: $H_{\nu}$ is seen in our proof as an intermediate tool, equal to $H$ when $I(\mu|P)=\infty$, but differing of $\Gamma-\Gamma_{\nu}$ otherwise. We control this difference below in Lemma~\ref{lemma1}.

Let us introduce two preliminary objects that will appear in the obtained upper-bound. 
First, because of spatial extension, it is useful to introduce a proper distance on $\C \times D$:
\begin{definition}
The map
\begin{equation}\label{def:SkorokhodLikeNorm}
d_T:
\left\{
	\begin{array}{ll}
	(\C \times D)^2 & \to \R_+ \\
	\big((x,r),(y,r')\big) & \to \bigg\{ \Vert r-r' \Vert_{\R^d}^2 + \underset{\underset{|b-a|\leq K_{\tau} \Vert r-r' \Vert_{\R^d}}{a,b \in [-\bar{\tau},0], t\in [0,T] }}{\sup}  \big| x_{t+a}-y_{t+b} \big|^2 \bigg\}^{\frac{1}{2}},
	\end{array}
\right.
\end{equation}
is a distance on $\C \times D$. Moreover,
\[
d_T\big((x^n,r_n),(x,r)\big) \to 0 \iff  \Vert x-x^n \Vert_{\infty,T} + \Vert r-r_n \Vert_{\R^d}\to 0,
\]
and $\big(\C \times D,d_T\big)$ is complete.
\end{definition}

\begin{remark}
In particular, $d_T$ generates the natural Borel $\sigma$-field of $\C \times D$, and $\big( \C \times D, d_T \big)$ is a Polish space.
\end{remark}

\begin{proof}
Symmetry and separation are easy to obtain. The triangular inequality is a consequence of the two following facts. First, for any $(x,r),(y,r'),(z,\tilde{r})$, we have
\begin{align*}
\underset{|b-a|\leq K_{\tau}\Vert r-r' \Vert_{\R^d}}{\underset{a,b \in [-\bar{\tau},0], t\in [0,T] }{\sup}}  \big| x_{t+a}-y_{t+b} \big| &\leq \underset{|c-a|\leq K_{\tau}\Vert r-\tilde{r} \Vert_{\R^d}, |b-c|\leq K_{\tau}\Vert \tilde{r}-r' \Vert_{\R^d}}{\underset{a,b,c \in [-\bar{\tau},0], t\in [0,T] }{\sup}}  \big| x_{t+a}-y_{t+b} \big| \\
& \leq \underset{|c-a|\leq K_{\tau}\Vert r-\tilde{r} \Vert_{\R^d}}{\underset{a,c \in [-\bar{\tau},0], t\in [0,T] }{\sup}}  \big| x_{t+a}-z_{t+c} \big| + \underset{|b-c|\leq K_{\tau}\Vert \tilde{r}-r' \Vert_{\R^d}}{\underset{c,b \in [-\bar{\tau},0], t\in [0,T] }{\sup}}  \big| z_{t+c}-y_{t+b} \big|,
\end{align*}
as the set
\[
\Big\{ a,b \in [-\bar{\tau},0], |b-a|\leq K_{\tau}\Vert r-r' \Vert_{\R^d} \Big\}
\]
is contained in
\[
 \Big\{ a,b \in [-\bar{\tau},0], \exists c \in [-\bar{\tau},0], |c-a|\leq K_{\tau}\Vert r-\tilde{r} \Vert_{\R^d}, |b-c|\leq K_{\tau}\Vert \tilde{r}-r' \Vert_{\R^d} \Big\}.
\]
Second, the triangular inequality of $\R^2$ for the Euclidean norm gives $\forall a_1,b_1,a_2,b_2 \in \R$,
\[
\Big\{(a_1+b_1)^2+(a_2+b_2)^2 \Big\}^{\frac{1}{2}} \leq \big\{a_1^2+a_2^2 \big\}^{\frac{1}{2}}  + \big\{b_1^2+b_2^2 \big\}^{\frac{1}{2}}.
\]
Hence, $d_T$ is a distance on $\C \times D$. Let $(x^n,r_n)_{n \in \N} \in \big(\C \times D\big)^{\N}$, and $(x,r) \in \C \times D$. Taking $a=b$ in the supremum, we see that
\begin{equation}\label{eq:DistanceInequality1}
\Vert r-r' \Vert_{\R^d} + \Vert x-y \Vert_{\infty,T} \leq d_T\big((x,r),(y,r')\big),
\end{equation}
we have that $d_T\big((x^n,r_n),(x,r)\big) \to 0$ implies $r_n \to r$, and $x_n \to x$ for the supremum-norm on $[-\bar{\tau},T]$. Conversely, suppose that $r_n \to r$, and $x_n \to_{\lVert \cdot \rVert_{\infty,T}} x$, and let $\eta>0$ such that
\[
s,t \in [-\bar{\tau},T], |s-t| \leq \eta \quad \implies \quad |x_t-x_s| \leq \varepsilon.
\]
We then see that, we can find a $n_0$ such that $\forall n \geq n_0$,
\[
d_T\big((x^n,r_n),(x,r)\big)^2 \leq \Vert r-r_n \Vert_{\R^d}^2 + 2 \Vert  x-x^n \Vert_{\infty,T}^2 + 2 \underset{|b-a|\leq \eta}{\underset{a,b \in [-\bar{\tau},0], t\in [0,T] }{\sup}}  \big| x_{t+a}-x_{t+b} \big|^2 \leq 3 \varepsilon^2.
\]
The completion of $\big( \C \times D,d_T \big)$ comes from that of $\big(\C ,\lVert \cdot \rVert_{\infty,T}\big)$, and $\big(D, |\cdot |\big)$, and from \eqref{eq:DistanceInequality1}.
\end{proof}

We also define the $2$-Vaserstein distance on $\M_1^+(\C \times D)$, associated with $d_T$:
\begin{equation*}
d_T^V(\mu,\nu):=\inf_{\xi}\bigg\{ \int_{(\C \times D)^2} d_T\big((x,r),(y,r') \big)^2 d\xi\big((x,r),(y,r')\big) \bigg\}^{\frac{1}{2}}
\end{equation*}
the infimum being taken on the laws $\xi \in \M_1^+\big((\C \times D)^2\big)$ with marginals $\mu$ and $\nu$. In the following, we will, for any $t \in [0,T]$, denote by $d_t$ and $d_t^V$ the respective restrictions of $d_T$ and $d_T^V$ on $\big(\C\big([-\bar{\tau},t],\R\big) \times D\big)^2$.

Here are a few regularity properties of the covariance and mean of our Gaussian interactions:

\begin{proposition}\label{prop:MeanAndVarRegularity}
There exists $C_T>0$ such that for any $\mu, \nu \in \M_1^+(\C \times D)$, $r \in D$, $t \in [0,T]$ and $u,s \in [0,t]$:
\begin{multline}\label{ineq:MajDiff}
   \big|m_{\mu}(t,r)-m_{\nu}(t,r)\big| +\big|K_{\mu}(t,s,r)-K_{\nu}(t,s,r)\big|
   + \big|\widetilde{K}^{t}_{\mu,r}(s,u)-\widetilde{K}^{t}_{\nu,r}(s,u)\big| \leq C_T d_T^V(\mu,\nu).
\end{multline}
\end{proposition}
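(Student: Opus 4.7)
The plan is to establish the three estimates in \eqref{ineq:MajDiff} sequentially: first for the mean $m_{\mu}$, then for the covariance $K_{\mu}$, and finally for the modified covariance $\widetilde{K}^t_{\mu,r}$, reducing the last to the second via the Lipschitz property \eqref{ineq:DiffLambda} of $\Lambda_t$. The key observation behind the first two bounds is that the integrands defining $m_{\mu}$ and $K_{\mu}$ are Lipschitz continuous in $(x,r')$ with respect to the distance $d_T$: the time-shifted evaluation $S(x_{t-\tau(r,r')})$ is tailored to the supremum in \eqref{def:SkorokhodLikeNorm} thanks to the Lipschitz continuity of $\tau$ in assumption 4.

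For the mean, fix any coupling $\xi \in \M_1^+\bigl((\C \times D)^2\bigr)$ with marginals $\mu$ and $\nu$ and write
\[
\lambda(r)\bigl(m_{\mu}(t,r) - m_{\nu}(t,r)\bigr) = \int \bigl[J(r,r_1) S(x_{t-\tau(r,r_1)}) - J(r,r_2) S(y_{t-\tau(r,r_2)})\bigr] d\xi\bigl((x,r_1),(y,r_2)\bigr).
\]
Adding and subtracting $J(r,r_1) S(y_{t-\tau(r,r_2)})$, invoking the Lipschitz continuity of $J$ in its second variable, the boundedness and smoothness of $S$, and observing that the shifts $a=-\tau(r,r_1)$, $b=-\tau(r,r_2)$ both lie in $[-\bar{\tau},0]$ and satisfy $|a-b| \leq K_{\tau} \Vert r_1-r_2 \Vert_{\R^d}$ by assumption 4, the integrand is bounded pointwise by a constant multiple of $d_T((x,r_1),(y,r_2))$. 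Cauchy--Schwarz and the lower bound $\lambda \geq \ls$ give $|m_{\mu}(t,r) - m_{\nu}(t,r)| \leq C_T \bigl(\int d_T^2 \, d\xi\bigr)^{1/2}$, and taking the infimum over $\xi$ produces the $d_T^V$ bound. The covariance term is handled identically, with $\sigma^2$ replacing $J$ and a product $S(x_{t-\tau})S(x_{s-\tau})$ replacing $S(x_{t-\tau})$: expanding $|S(x_{t+a})S(x_{s+a})-S(y_{t+b})S(y_{s+b})|$ via the triangle inequality absorbs one factor by the uniform bound on $S$ and the other by its Lipschitz constant, and both pointwise differences $|x_{t+a}-y_{t+b}|$, $|x_{s+a}-y_{s+b}|$ are then bounded by the same supremum defining $d_T$.

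For $\widetilde{K}^t_{\mu,r}-\widetilde{K}^t_{\nu,r}$ I would realize $G^{\mu}(r)$ and $G^{\nu}(r)$ jointly on $(\hat{\Omega},\hat{\mathcal{F}},\gamma)$ via a coupling controlling $\E_{\gamma}[(G^{\mu}_s(r)-G^{\nu}_s(r))^2]^{1/2}$ by $\Vert K_{\mu}(\cdot,\cdot,r)-K_{\nu}(\cdot,\cdot,r) \Vert_{\infty}$, which is legitimate since $\widetilde{K}^t_{\mu,r}$ depends only on the marginal law of $G^{\mu}(r)$. Splitting
\[
\widetilde{K}^t_{\mu,r}(s,u) - \widetilde{K}^t_{\nu,r}(s,u) = \E_{\gamma}\bigl[(G^{\mu}_u G^{\mu}_s - G^{\nu}_u G^{\nu}_s)\Lambda_t(G^{\mu}(r))\bigr] + \E_{\gamma}\bigl[G^{\nu}_u G^{\nu}_s\bigl(\Lambda_t(G^{\mu}(r)) - \Lambda_t(G^{\nu}(r))\bigr)\bigr],
\]
I bound the first summand by Cauchy--Schwarz with $\Lambda_t \leq C_T$ from \eqref{ineq:BoundOnLambda} combined with the $L^2$ control of $G^{\mu}_u G^{\mu}_s - G^{\nu}_u G^{\nu}_s$, and the second by \eqref{ineq:DiffLambda} followed once more by Cauchy--Schwarz. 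Everything then reduces to $\Vert K_{\mu}(\cdot,\cdot,r)-K_{\nu}(\cdot,\cdot,r) \Vert_{\infty}$, which the covariance step already dominates by $C_T d_T^V(\mu,\nu)$.

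The principal technical obstacle is the coupling step for $\widetilde{K}$: obtaining the genuinely linear (rather than H\"older) dependence of $\E_{\gamma}[(G^{\mu}_s-G^{\nu}_s)^2]^{1/2}$ on $\Vert K_{\mu}-K_{\nu} \Vert_{\infty}$ is delicate, since the covariance kernels carry no uniform positive lower bound and naive maximal Gaussian couplings only yield square-root dependence. A clean remedy is to interpolate $K_{\theta} := (1-\theta)K_{\mu}+\theta K_{\nu}$, $\theta \in [0,1]$, and differentiate the functional $K \mapsto \widetilde{K}^t_{K,r}(s,u)$ via Gaussian integration by parts, which yields a derivative linear in $K_{\mu}-K_{\nu}$ modulo bounded factors and hence the desired Lipschitz dependence; the remaining steps then reduce to systematic Lipschitz manipulations and Cauchy--Schwarz estimates.
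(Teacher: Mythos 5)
Your plan for the mean and for the raw covariance $K_{\mu}$ matches the paper's proof precisely: introduce a coupling $\xi$ with marginals $\mu,\nu$, exploit the Lipschitz regularity of $J$, $\sigma$, $S$ and the delay $\tau$ so that the integrand is pointwise dominated by $d_T\bigl((x,r'),(y,\tilde r')\bigr)$, then Cauchy--Schwarz and infimize over $\xi$. That part is correct and there is nothing to add.

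Where you diverge is in the treatment of $\widetilde{K}^{t}_{\mu,r}-\widetilde{K}^{t}_{\nu,r}$, and here you have partly created a difficulty the paper never encounters. You propose to realize $G^{\mu}(r),G^{\nu}(r)$ jointly via a coupling whose $L^{2}$ distance is controlled by $\|K_{\mu}(\cdot,\cdot,r)-K_{\nu}(\cdot,\cdot,r)\|_{\infty}$, and you correctly observe that generic Gaussian couplings give only a square-root (H\"older) dependence of $\E_{\gamma}[(G^{\mu}_s-G^{\nu}_s)^2]^{1/2}$ on $\|K_{\mu}-K_{\nu}\|_{\infty}$. The paper avoids this entirely: rather than going through the covariance kernels, it builds the Gaussian coupling directly from the \emph{measure} coupling $\xi$. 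Specifically, it introduces a bi-dimensional centered Gaussian process $(G,G')$ whose joint covariance $K_{\xi}(s,t,r)$ (display \eqref{kxi}) is the $\xi$-average of the explicit rank-one kernels of the two populations. The marginals of $(G,G')$ are then exactly $K_{\mu}$ and $K_{\nu}$, while the cross-term is chosen so that
\[
\E_{\gamma}\bigl[(G_t-G'_t)^2\bigr]=\frac{1}{\lambda(r)^2}\int_{(\C\times D)^2}\Bigl(\sigma(r,r')S(x_{t-\tau(r,r')})-\sigma(r,\tilde r')S(y_{t-\tau(r,\tilde r')})\Bigr)^2\,d\xi
\leq C_T\int d_t^2\,d\xi,
\]
which is \emph{linear} in the transport cost by construction. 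The bound on $\widetilde{K}^{t}_{\mu,r}-\widetilde{K}^{t}_{\nu,r}$ then follows by the same split you write down, combined with \eqref{ineq:BoundOnLambda} and \eqref{ineq:DiffLambda}, all in terms of $\E_{\gamma}[(G-G')^2]^{1/2}$, and the infimum over $\xi$ closes the estimate. In short, the paper never passes through $\|K_{\mu}-K_{\nu}\|_{\infty}$, so the square-root loss never appears.

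Your proposed remedy (interpolating $K_{\theta}=(1-\theta)K_{\mu}+\theta K_{\nu}$ and differentiating $K\mapsto\widetilde{K}^{t}_{K,r}(s,u)$ by Gaussian integration by parts) is a legitimate alternative and would indeed produce a linear dependence on $K_{\mu}-K_{\nu}$, hence on $d_T^{V}(\mu,\nu)$ via your estimate on $K_{\mu}-K_{\nu}$. However, it is considerably heavier than the route taken in the paper: the functional $\Lambda_t(G)$ is a ratio involving an integral of $G^2$ over a continuous time interval, so the integration-by-parts identity must be justified in an infinite-dimensional setting, the second Gateaux derivatives must be shown integrable, and the denominator's dependence on $\theta$ must be tracked. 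None of this is insurmountable, but it is substantially more work than the paper's one-line computation of $\E_{\gamma}[(G_t-G'_t)^2]$ under the $K_{\xi}$-coupling, and it is worth recognizing that the paper's trick (choosing the Gaussian coupling to be transport-compatible rather than covariance-compatible) is the mechanism that makes the Lipschitz bound come out for free.
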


\begin{proof}
Let $\xi \in \M_1^+\big((\C \times D)^2\big)$ with marginals $\mu$ and $\nu$, and let $\big(G, G'\big)$ be, under $\gamma$, a family of independent bi-dimensional centered Gaussian processes with covariance $K_{\xi}(s,t,r)$ given by:
\begin{equation}
\frac{1}{\lambda(r)^2} \int_{(\C \times D)^2} 
\left(
\begin{array}{ccc}
\sigma_{r r'}^2 S(x_{s-\tau_{r r'}})S(x_{t-\tau_{r r'}})  & \sigma_{r r'}\sigma_{r \tilde{r}'}S(x_{s-\tau_{r r'}})S(y_{t-\tau_{r \tilde{r}'}}) \\
\sigma_{r r'}\sigma_{r \tilde{r}'} S(y_{s-\tau_{r \tilde{r}'}})S(x_{t-\tau_{r r'}}) & \sigma_{r \tilde{r}'}^2 S(y_{s-\tau_{r \tilde{r}'}})S(y_{t-\tau_{r \tilde{r}'}}) \\
\end{array}
\right) 
d\xi \big((x,r'),(y,\tilde{r}')\big) \label{kxi}.
\end{equation}
with the short-hand notations $\sigma_{r r'}:=\sigma(r,r')$, $\tau_{r r'}=\tau(r,r')$.
Let us first take care of the mean difference:
    \begin{align*}
	& \big|m_{\mu}(t,r)-m_{\nu}(t,r)\big| = \bigg|\frac{1}{\lambda(r)} \int_{\C \times D} J(r,r') S (x_{t-{\tau(r,r')}}) d(\mu-\nu)(x,r')\bigg|  \\
	& \leq \frac{1}{\ls}  \int_{(\C \times D)^2} \Big|J(r,r')S (x_{t-{\tau(r,r')}}) - J(r,\tilde{r}')S (y_{t-{\tau(r,\tilde{r}')}})\Big| d\xi\big((x,r'),(y,\tilde{r}')\big) \\
	& \leq \frac{1}{\ls} \int_{(\C \times D)^2} \Big\{K_J \Vert r'-\tilde{r}' \Vert_{\R^d}+ \Jmax \big|S\big(x_{t-{\tau(r,r')}}\big) - S\big(y_{t-{\tau(r,\tilde{r}')}}\big)\big| \Big\} d\xi\big((x,r'),(y,\tilde{r}')\big) \\
	& \leq \frac{1}{\ls} \int_{(\C \times D)^2} \bigg\{K_J \Vert r'-\tilde{r}' \Vert_{\R^d}+ \Jmax K_S \underset{|b-a|\leq K_{\tau} \Vert r'-\tilde{r}' \Vert_{\R^d}}{\underset{a,b \in [-\bar{\tau},0], t\in [0,T] }{\sup}} \big|x_{t-a} - y_{t-b}\big| \bigg\} d\xi\big((x,r'),(y,\tilde{r}')\big) \\
	& \overset{\mathrm{C.S.}}{\leq} C_T \bigg\{\int_{(\C \times D)^2} d_T\big((x,r'),(y,\tilde{r}') \big)^2 d\xi\big((x,r'),(y,\tilde{r}')\big)\bigg\}^{\frac{1}{2}}.
	\end{align*}

Moreover,
\begin{align*}
\big|K_{\mu}(t,s,r) - K_{\nu}(t,s,r) \big| & = \Big| \E_{\gamma} \Big[ G_sG_t-G_s' G_t' \Big]\Big| \leq  C_T \bigg\{\E_{\gamma} \Big[ \big(G_t-G_t'\big)^2 \Big]^{\frac{1}{2}} +\E_{\gamma} \Big[ \big(G_s-G_s'\big)^2 \Big]^{\frac{1}{2}}\bigg\}.
\end{align*}
and
\begin{align*}
\big|\widetilde{K}^t_{\mu,r}(s,u) - \widetilde{K}^t_{\nu,r}(s,u) \big| & \overset{\eqref{ineq:BoundOnLambda}}{\leq} C_T \bigg\{ \E_{\gamma} \Big[ \big( \Lambda_t(G)-\Lambda_t(G')\big)^2 \Big]^{\frac{1}{2}} + \E_{\gamma} \Big[ \big(G_s-G_s'\big)^2 \Big]^{\frac{1}{2}} +\E_{\gamma} \Big[ \big(G_u-G_u'\big)^2 \Big]^{\frac{1}{2}} \bigg\}\\
& \overset{\eqref{ineq:DiffLambda}}{\leq} C_T \bigg\{ \bigg(\int_0^t \E_{\gamma} \Big[ \big(G_v-G'_v\big)^2 \Big] dv \bigg)^{\frac{1}{2}} + \E_{\gamma} \Big[ \big(G_s-G_s'\big)^2 \Big]^{\frac{1}{2}} +\E_{\gamma} \Big[ \big(G_u-G_u'\big)^2 \Big]^{\frac{1}{2}} \bigg\}.
\end{align*}

\begin{align*}
\E_{\gamma} \Big[ \big(G_t-G_t'\big)^2 \Big] = \frac{1}{\lambda(r)^2}  \int_{(\C \times D)^2 } \Big(\sigma(r,r')S(x_{t-\tau(r,r')})-\sigma(r,\tilde{r}')S(y_{t-\tau(r,\tilde{r}')})\Big)^2 \, d\xi\big((x,r'),(y,\tilde{r}')\big).
\end{align*}
  Splitting the integrand of the right-hand side, we find:
  \begin{multline*}
  \Big(\sigma_{r r'} S(x_{t-\tau_{r r'}})-\sigma_{r \tilde{r}'}S(y_{t-\tau_{r \tilde{r}'}} )\Big)^2 \leq 2 \Big\{ \big(\sigma_{r r'}-\sigma_{r \tilde{r}'}\big)^2 S(x_{t-\tau_{r r'}})^2+\sigma_{r \tilde{r}'}^2 \big(S(x_{t-\tau_{r r'}})-S(y_{t-\tau_{r \tilde{r}'}})\big)^2 \Big\} \\
	\leq 2 K^2_{\sigma} \Vert r'-\tilde{r}' \Vert_{\R^d}^2 + 2\smax^2 K_S^2 \underset{|b-a|\leq K_{\tau}\Vert r'-\tilde{r}' \Vert_{\R^d}}{\underset{a,b \in [-\bar{\tau},0]}{\sup}} |x_{t+a}-y_{t+b}|^2 \leq C d_t\big((x,r'),(y,\tilde{r}') \big)^2,
  \end{multline*}
so that
\begin{align*}
\E_{\gamma} \Big[ \big(G_t-G_t'\big)^2 \Big] \leq C_T \int_{(\C \times D)^2 }  d_t\big((x,r'),(y,\tilde{r}') \big)^2 d\xi\big((x,r'),(y,\tilde{r}')\big).
\end{align*}

Taking the infimum on $\xi$ yields \eqref{ineq:MajDiff}.
	
\end{proof}

We have now introduced all the needed elements to state the following technical lemma concluding on the fact that $H$ is a good rate function. 
\begin{lemma}\label{lemma1}
Let $\mu,\nu \in \mathcal{M}_1^+\big( \C \times D\big)$, then:
\begin{enumerate}
\item  there exists a positive constant $C_T$ such that:
\begin{enumerate}
\item  $|\Gamma_{1,\nu}(\mu)-\Gamma_1(\mu)| \leq C_T d_T^V(\mu,\nu)$.
\item  $|\Gamma_{2,\nu}(\mu)-\Gamma_2(\mu)| \leq C_T \big(1+I(\mu|P)\big) d_T^V(\mu,\nu)$.
\end{enumerate}
\item $H$ is a good rate function.
\end{enumerate}
\end{lemma}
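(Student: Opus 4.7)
\emph{Plan.} The strategy is to establish part 1 by direct pointwise-then-integrated estimates, and then to deduce part 2 by combining the linearization $H_\nu$ of Theorem~\ref{thm:Qnu} with the perturbative bound of part 1.

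For part 1(a), I would write
\[
\Gamma_{1,\nu}(\mu) - \Gamma_1(\mu) = -\tfrac{1}{2} \int_{\C \times D} \int_0^T \Bigl\{ \widetilde{K}^t_{\nu,r}(t,t) - \widetilde{K}^t_{\mu,r}(t,t) + m_\nu(t,r)^2 - m_\mu(t,r)^2 \Bigr\}\, dt\, d\mu(x,r),
\]
bound the kernel difference pointwise using Proposition~\ref{prop:MeanAndVarRegularity}, and use $|m_\mu|, |m_\nu| \leq \lVert J \rVert_\infty / \ls$ to control $m_\nu^2 - m_\mu^2 = (m_\nu - m_\mu)(m_\nu + m_\mu)$ again via Proposition~\ref{prop:MeanAndVarRegularity}. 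Integrating over $[0,T]$ against the probability $\mu$ immediately yields the claimed $C_T\, d_T^V(\mu,\nu)$ bound.

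For part 1(b) I would split $\Gamma_{2,\nu}(\mu) - \Gamma_2(\mu)$ into the drift term $\int \int_0^T (m_\nu - m_\mu)(t,r)\, dW_t\, d\mu$ and the quadratic term $\tfrac12 \int \{ \int L^\nu_T(x,r)^2\, d\gamma_{\widetilde{K}^T_{\nu,r}} - \int L^\mu_T(x,r)^2\, d\gamma_{\widetilde{K}^T_{\mu,r}} \}\, d\mu$. The drift term is bounded via the entropy inequality~\eqref{eq:IneqRelativeEntropy}: since under $P_r$ the stochastic integral $\int_0^T (m_\nu - m_\mu)\, dW_t$ is a centered Gaussian with variance at most $T \lVert m_\nu - m_\mu \rVert_\infty^2$, applying~\eqref{eq:IneqRelativeEntropy} to its $\pm\lambda$-multiples yields, after optimizing $\lambda$, a bound $\sqrt{2 T C_T\, I(\mu|P)}\, d_T^V(\mu,\nu) \leq C_T(1 + I(\mu|P))\, d_T^V(\mu,\nu)$ by Proposition~\ref{prop:MeanAndVarRegularity}. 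The quadratic piece is handled by expanding $L^\nu_T - L^\mu_T$ into stochastic and drift contributions and applying~\eqref{ineq:DiffLambda},~\eqref{ineq:MajDiff}, and the uniform bounds~\eqref{ineq:BoundOnLambda} to control all Gaussian factors under the tilted laws $\gamma_{\widetilde{K}^T_{\nu,r}}$ and $\gamma_{\widetilde{K}^T_{\mu,r}}$; any residual cross-term containing an $\int \cdot\, dW$ against $\mu$ is absorbed via the same entropy-inequality argument, producing the $1 + I(\mu|P)$ prefactor.

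For part 2, Proposition~\ref{prop:GammaBehaviour}(2) yields $H \geq (1-\iota) I(\cdot|P) - e$, so $\{H \leq \alpha\} \subset \{I(\cdot|P) \leq (\alpha+e)/(1-\iota)\}$, which is weakly compact since $I(\cdot|P)$ is a good rate function by Sanov. Lower semi-continuity thus suffices to close level sets; I would exploit the decomposition $H(\mu) = H_\nu(\mu) + (\Gamma_\nu(\mu) - \Gamma(\mu))$ with $H_\nu = I(\cdot|Q_\nu)$ lower semi-continuous by Theorem~\ref{thm:Qnu}. For a sequence $\mu_n \to \mu$ weakly with $\sup_n I(\mu_n|P) \leq K < \infty$, setting $\nu = \mu$ and using $\Gamma_\mu(\mu) = \Gamma(\mu)$ together with part 1 gives
\[
\liminf_n H(\mu_n) \geq \liminf_n H_\mu(\mu_n) - C_T (1 + K)\, \limsup_n d_T^V(\mu_n, \mu) \geq H(\mu),
\]
provided $d_T^V(\mu_n, \mu) \to 0$. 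The main obstacle is precisely this upgrade from weak to $d_T^V$ convergence on $\{I(\cdot|P) \leq K\}$. Since $d_T^2 \lesssim \lVert x\rVert_{\tau,T}^2 + \lVert y\rVert_{\tau,T}^2 + \operatorname{diam}(D)^2$, this reduces to uniform integrability of $\lVert x\rVert_{\tau,T}^2$ under $\{\mu_n\}$; as $P_r$ has Gaussian-type exponential moments of $\lVert x\rVert_{\tau,T}^2$ (Lipschitz drift, bounded diffusion, square-integrable initial datum from~\eqref{hyp:spaceRegInitCond}), applying~\eqref{eq:IneqRelativeEntropy} with $\Phi = \lambda \lVert x\rVert_{\tau,T}^2$ for small $\lambda$ transfers this exponential integrability to the $\mu_n$ uniformly in $n$, yielding the required uniform integrability and completing the proof.
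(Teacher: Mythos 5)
Your parts 1(a) and 1(b) track the paper's strategy: 1(a) is handled exactly the same way (write the difference, use Proposition~\ref{prop:MeanAndVarRegularity}, integrate), and for 1(b) you identify the correct ingredients (split into a drift piece and a Gaussian quadratic piece, control the drift via the entropy inequality, control the quadratic piece via~\eqref{ineq:DiffLambda}, \eqref{ineq:MajDiff}, \eqref{ineq:BoundOnLambda}, and absorb residual stochastic integrals through the entropy inequality to get the $1+I(\mu|P)$ prefactor). The paper's actual execution of 1(b) is considerably more delicate than your sketch: it introduces the disintegration $d\mu(x,r)=d\mu_r(x)\,c_\mu(r)\,d\pi(r)$, proves the key quantitative inequality~\eqref{ineqphi} in terms of $I(\mu_r|P_r)$, carefully decomposes the difference into four terms $B_1,\dots,B_4$ under a coupling $\xi$ (with the bivariate Gaussian covariance~\eqref{kxi}), applies Isserlis' theorem, and finally uses the convexity estimate $\int_D I(\mu_r|P_r)c_\mu(r)d\pi(r)\le I(\mu|P)$ to assemble the $1+I(\mu|P)$ bound. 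Your proposal calls this \emph{plausible} without carrying it out, so it is a correct plan but not a proof.

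The more serious issue is in your part~2. The paper simply defers to~\cite[Lemma~5.(vi)]{cabana-touboul:12}, so there is no direct in-paper comparison, but the concrete claim you use to close the argument is false. You reduce lower semicontinuity along a weakly convergent sequence $\mu_n\to\mu$ with $\sup_n I(\mu_n|P)\le K$ to showing $d_T^V(\mu_n,\mu)\to 0$, and you propose to obtain the required uniform integrability of $\|x\|_{\tau,T}^2$ under $\{\mu_n\}$ by "transferring" the exponential moment of $P$ through the entropy inequality~\eqref{eq:IneqRelativeEntropy}. But~\eqref{eq:IneqRelativeEntropy} controls $\int \Phi\,d\mu$, not $\int e^\Phi\,d\mu$: taking $\Phi=(\lambda\|x\|^2-\lambda M)^+$ gives, as $M\to\infty$, only $\sup_n\int_{\|x\|^2>2M}\|x\|^2\,d\mu_n\le 2(K+o(1))/\lambda$, a constant, not something tending to zero. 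In fact a bounded relative entropy to $P$ together with exponential moments of $P$ on a small interval of exponents does not yield uniform integrability of the second moment: one can take $\mu_n=(1-\varepsilon_n)P+\varepsilon_n p_n$ with $p_n$ a conditioned (or shifted) version of $P$ concentrating at distance $a_n\to\infty$ and $\varepsilon_n a_n^2=c>0$, so that $I(\mu_n|P)\le (1-\varepsilon_n)\cdot 0+\varepsilon_n I(p_n|P)\sim c'c$ stays bounded, $\mu_n\to P$ weakly, but $\int\|x\|^2\,d\mu_n\to\int\|x\|^2\,dP+c'$, i.e.\ the second moments and hence $d_T^V$ do \emph{not} converge. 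So the step "weak convergence $+$ bounded entropy $\Rightarrow$ $d_T^V$-convergence on level sets" is a genuine gap, and your argument for the compactness of the level sets of $H$ needs to be replaced by a different mechanism (for instance the variational/approximation argument actually carried out in the cited lemma, which does not hinge on upgrading weak convergence to Vaserstein-2 convergence via entropy bounds alone).
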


\begin{proof}
The main techniques were introduced in~\cite[Lemma 3.3-3.4]{ben-arous-guionnet:95} and used in a neuroscience setting in~\cite[ Lemma.5]{cabana-touboul:12}, but spatiality induces new issues essentially impacting the proof of point~(1.b). For the sake of completeness, we will here reproduce these techniques, and address the specific spatial difficulties.

 \noindent {\bf Proof of Lemma~\ref{lemma1}.(1.a).} \\

    We define
    \[
     \Gamma_1(\mu,r):= - \frac{1}{2} \int_0^T \Big(\widetilde{K}^t_{\mu,r}(t,t)+ m_{\mu}(t,r)^2\Big) dt,
    \]
    so that
    \begin{align}
     & \big|\Gamma_{1,\nu}(\mu)- \Gamma_1(\mu) \big| = \bigg|\int_{\C \times D} \big(\Gamma_1(\nu,r)-\Gamma_1(\mu,r)\big) d\mu(x,r) \bigg| \nonumber \\
     & \leq \frac{1}{2}\int_0^T \Big|\big(m_{\mu}(t,r) -m_{\nu}(t,r)\big)\big(m_{\mu}(t,r) +m_{\nu}(t,r)\big)\Big| + \Big|\widetilde{K}^t_{\nu,r}(t,t)-\widetilde{K}^t_{\mu,r}(t,t) \Big| dt \overset{\eqref{ineq:MajDiff}}{\leq} C_T d_T^V(\mu,\nu).\label{gamma1}
    \end{align} 
	
\noindent {\bf Proof of Lemma~\ref{lemma1}.(1.b)}\\

Note that if $I(\mu \vert P)=\infty$ the inequality is obvious. Let then $\mu \in \M_1^+(\C \times D)$ with $I(\mu \vert P)<\infty$ implying $\mu \ll P$ and finiteness of $\Gamma_{\nu}(\mu)$, and $\Gamma(\mu)$. This also implies that $\mu$ has a measurable density $\rho_{\mu}$ with respect to $\mathcal{B}\big(\C \times D \big)$:
\[
d\mu(x,r)=\rho_{\mu}(x,r)dP(x,r)=\rho_{\mu}(x,r)dP_r(x) d\pi(r).
\]
Hence, for $r \in D$ such that $c_{\mu}(r):=\int_{\C} \rho_{\mu}(x,r) dP_r(x) \not\in \{0,+\infty\}$, we can properly define $\mu_r \in \M_1^+(\C)$ by $d\mu_r(x):=\frac{\rho_{\mu}(x,r)}{c_{\mu}(r)}dP_r(x)$. Of course $\mu_r \ll P_r$, and
\begin{equation}\label{eq:DecompositionMu}
d\mu(x,r)=d\mu_r(x) c_{\mu}(r) d\pi(r).
\end{equation}
Remark that $c_{\mu}$ is a measurable function of space such that $\int_D c_{\mu}(r) d\pi(r)=1$, and that the set $\{ r \in D, \; c_{\mu}(r) \in \{0,+\infty\} \}$ do not impact the value of integrals over $\mu$.

In order to obtain the proper inequality, we split the difference of interest into different terms:
\begin{multline*}
\big|\Gamma_{2,\nu}(\mu) - \Gamma_2(\mu)\big| \leq \frac{1}{2} \bigg|\int_{\C \times D} \bigg\{ \int_{\hat{\Omega}} L^{\nu}_T(x,r)^2 d\gamma_{\widetilde{K}_{\nu,r}^{T}} - \int_{\hat{\Omega}} L^{\mu}_T(x,r)^2  d\gamma_{\widetilde{K}_{\mu,r}^{T}} \bigg\}d\mu(x,r) \bigg|\\
+ \bigg|\int_{\C \times D} \int_0^T (m_{\nu}-m_{\mu})(t,r)dW_t(x,r) d\mu(x,r)\bigg|
\leq \! \frac{1}{2} \bigg|\int_{\C \times D} \int_{\hat{\Omega}} L^{\nu}_T(x,r)^2 d\big(\gamma_{\widetilde{K}_{\nu,r}^{T}}-\gamma_{\widetilde{K}_{\mu,r}^{T}}\big) d\mu(x,r) \bigg|  \\
+\frac{1}{2} \bigg|\int_{\C \times D} \int_{\hat{\Omega}} \Big\{L^{\mu}_T(x,r)^2 - L^{\nu}_T(x,r)^2 \Big\} d\gamma_{\widetilde{K}_{\mu,r}^{T}} d\mu(x,r) \bigg| + \bigg|\int_{\C \times D} \int_0^T (m_{\nu}-m_{\mu})(t,r)dW_t(x,r) d\mu(x,r)\bigg|.
\end{multline*}

Let $\xi \in \M_1^+\big((\C \times D)^2\big)$ with marginals $\mu$ and $\nu$ be such that
\begin{align*}
\int_{(\C \times D)^2 }  d_T\big((x,r'),(y,\tilde{r}') \big)^2 d\xi\big((x,r'),(y,\tilde{r}')\big) \leq \big(d_T^V(\mu,\nu)+\varepsilon\big)^2.
\end{align*}
Moreover, let $\big(G(r), G'(r)\big)_{r \in D}$ be, under $\gamma$, a family of independent bi-dimensional centered Gaussian processes with covariance $K_{\xi}(s,t,r)$ as define in \eqref{kxi}. Remark that for any map $f$
\[
\E_{\gamma} \Big[ f\big(G^{\mu}(r)\big)\Big] - \E_{\gamma} \Big[ f\big(G^{\nu}(r)\big)\Big]= \E_{\gamma} \Big[f\big(G(r)\big)-f\big(G'(r)\big) \Big]
\]
and as proved in Proposition~\ref{prop:MeanAndVarRegularity},
\[
\E_{\gamma} \Big[ \big(G_t(r)-G'_t(r) \big)^2 \Big] \leq C_T \big(d_T^V(\mu,\nu)+\varepsilon\big)^2.
\]
Let also
\begin{equation*}
L_t(x,r):= \int_0^t G_s(r) dV^{\mu}_s(x,r), \; \; L'_t(x,r):= \int_0^t G'_s(r) dV^{\nu}_s(x,r).
\end{equation*}
Using inequality \eqref{ineq:BoundOnLambda}, we then obtain:
\begin{multline}
|\Gamma_{2,\nu}(\mu) - \Gamma_2(\mu)| \overset{\mathrm{C.S.}}{\leq} C_T \Bigg\{ \overbrace{  \int_{\C \times D} \E_{\gamma} \Big[ \big|\Lambda_T(G(r))-\Lambda_T(G'(r))\big| L'_T(x,r)^2 \Big] d\mu(x,r) }^{B_1:=}\\ + \underbrace{\prod_{\varepsilon=\pm1} \Bigg( \int_{\C \times D} \E_{\gamma} \Bigg[ \bigg( \int_0^T (G_t(r) +\varepsilon G'_t) dV^{\mu}_t(x,r)  \bigg)^2 \Bigg] d\mu(x,r) \Bigg)^\frac{1}{2}}_{=:B_2} \\
+ \underbrace{ \Bigg| \int_{\C \times D}  \E_{\gamma} \bigg[ \bigg( \int_0^T G_t(r) dV^{\mu}_t(x,r) \bigg)^2 \!\!\!-\! \bigg( \int_0^T G_t(r) dV^{\nu}_t(x,r) \bigg)^2 \bigg] d\mu(x,r) \Bigg|}_{=:B_3} \\+ \underbrace{ \bigg( \int_{\C \times D} \Big| \int_0^T (m_{\nu}-m_{\mu})(t,r)dW_t(x,r) \Big|^2 d\mu(x,r) \bigg)^{\frac{1}{2}}}_{=:B_4}  \Bigg\} \label{ineqgamma2}.
\end{multline}

Before bounding these four terms, we prove a useful inequality. For any $h, m \in L^2([0;T], dt)$, with $m$ bounded, and any $r \in D$ with $c_{\mu}(r) \not\in \{0,+\infty\}$,
\begin{equation}\label{eq:RelatEntrIneq}
\int_{\C} \Big( \int_0^T h_t (dW_t(x,r) - m(t)dt) \Big)^2 d\mu_r(x) \leq  2\bigg\{ \int_{\C} \Big(\int_0^T h_t dW_t(x,r) \Big)^2 + \Big( \int_0^T h_t m_t dt \Big)^2 d\mu_r(x) \bigg\}.
\end{equation}

Moreover, supposing that $h \neq  0_{L^2([0;T], dt)}$, then $\Phi_h(x)=\frac{\Big(\int_0^T h_t dW_t(x,r) \Big)^2}{4 \big(\int_0^T h_t^2 dt \big)}$ is a well-defined, positive and measurable function of the $\sigma$-algebra $\mathcal{B}(\C)$, so that resorting to \eqref{eq:IneqRelativeEntropy} one obtains
\begin{equation*}
\int_{\C} \Phi_h(x) d\mu_r(x) \leq  I(\mu_r \vert P_r) +\log \int_{\C} \exp \Phi_h(x) dP_r(x).
\end{equation*}

As $W(.,r)$ is a Brownian motion under $P_r$, $\Phi_h \sim \mathcal{N}\big(0,\frac 1 4 \big)^2$, so that Gaussian calculus gives, for any $C>2$:
\begin{equation*}
\int_{\C} \Big(\int_0^T h_t dW_t(x,r)\Big)^2 d\mu_r(x) \leq  C \big(I(\mu_r \vert P_r) + 1\big)\Big( \int_0^T h_t^2 dt \Big)
\end{equation*}

Remark that this inequality obviously holds when $h=0_{L^2([0;T], dt)}$.
Applying this result in \eqref{eq:RelatEntrIneq} one eventually finds:
\begin{align}\label{ineqphi}
\int_{\C} \Big( \int_0^T h_t (dW_t(x,r) - m(t)dt) \Big)^2 d\mu_r(x) & \overset{\mathrm{C.S.}}{\leq} 2 \bigg(C\big(1+I(\mu_r|P_r)\big) + m^2_{\infty}T \bigg) \Big( \int_0^T h_t^2 dt \Big)\nonumber \\
& \leq C_T \big(1+I(\mu_r|P_r)\big) \Big( \int_0^T h_t^2 dt \Big).
\end{align}

With this result in mind, we now control the first term. Recall that, by \eqref{ineq:DiffLambda},
\begin{align*}
\big|\Lambda_T(G(r))-\Lambda_T(G'(r))\big| \leq C_T \bigg( \int_0^T \big| G_t(r)^2-{G'}_t(r)^2\big|dt + \int_0^T \E_{\gamma} \Big[ \big( G_t(r)-{G'}_t(r)\big)^2 \Big]^{\frac{1}{2}}  dt \bigg).
\end{align*}

Now, relying on the decomposition of $\mu$, we find
\begin{align*}
B_1 & \overset{\text{Fubini}}{=} \int_{D} \E_{\gamma} \Bigg[ \Big|\Lambda_T(G(r))-\Lambda_T(G'(r))\Big| \bigg\{ \int_{\C} L'_T(x,r)^2 d\mu_r(x) \bigg\} \Bigg] c_{\mu}(r) d\pi(r)\\
& \overset{\eqref{ineqphi}}{\leq} \int_{D} C_T \big(I(\mu_r|P_r)+1 \big)  \E_{\gamma} \bigg[\Big|\Lambda_T(G(r))-\Lambda_T(G'(r))\Big|  \bigg\{\int_0^T  {G_t'}(r)^2 dt \bigg\} \bigg] c_{\mu}(r) d\pi(r) \\
& \leq C_T \int_{D} \bigg( \int_0^T \int_0^T \E_{\gamma} \bigg[ \big| G_s(r)^2-{G'}_s(r)^2\big|  {G_t'}(r)^2  \bigg] \big(I(\mu_r|P_r)+1 \big) ds dt + d_T^V(\mu,\nu) +\varepsilon \bigg) c_{\mu}(r) d\pi(r) \\
& \leq C_T  \bigg( \int_{D} I(\mu_r|P_r) c_{\mu}(r) d\pi(r) +1 \bigg) \big(d_T^V(\mu,\nu)+\varepsilon\big).
\end{align*}

where the last inequality is a consequence of Cauchy-Schwarz's inequality, and Isserlis' theorem. Observe that:

\begin{align*}
\int_{D} I(\mu_r|P_r) c_{\mu}(r) d\pi(r) & = \int_D \int_{\C} \log\Big(\frac{\rho_{\mu}(x,r)}{c_{\mu}(r)}\Big) d\mu_r(x) c_{\mu}(r) d\pi(r)\\
& = \int_{\C \times D} \log(\rho_{\mu}(x,r)) d\mu(x,r) -\int_{D} \log(c_{\mu}(r)) c_{\mu}(r) d\pi(r) \overset{\text{Jensen}}{\leq} I(\mu|P).
\end{align*}
As a consequence,
\begin{align*}
B_1 \leq C_T \big(1+I(\mu|P) \big)\big(d_T^V(\mu,\nu)+\varepsilon\big).
\end{align*}

Similarly, there exists a constant $c_T$ such that
\begin{align*}
B_2 & \leq \prod_{\varepsilon=\pm1}  \Bigg( \int_D c_T \big(1+I(\mu_r|P_r)\big) \E_{\gamma} \bigg[ \int_0^T \big(G_t(r) +\varepsilon G'_t(r)\big)^2 dt \bigg] c_{\mu}(r) d\pi(r)\Bigg)^{\frac{1}{2}} \\ 
& \leq C_T \big(1+I(\mu|P)\big)^{\frac 1 2 }  \Bigg( \int_D \big(1+I(\mu_r|P_r)\big) \int_0^T \E_{\gamma} \Big[ \big(G_t(r) - G'_t(r)\big)^2 \Big] dt c_{\mu}(r)  d\pi(r)\Bigg)^{\frac{1}{2}}\\
& \leq C_T \big(1+I(\mu|P)\big) \big(d_T^V(\mu,\nu)+\varepsilon\big).
\end{align*}

To bound $B_3$, we first use Cauchy-Schwarz inequality:
\begin{align}\label{ineq:B3}
B_3 \leq \prod_{\varepsilon = \pm 1} \Bigg\{ \int_{\C \times D} \E_{\gamma}  \Bigg[ \bigg| \int_0^T G_t(r) \Big( (1+\varepsilon)dW_t(x,r) - (m_{\nu}(t,r) + \varepsilon m_{\mu}(t,r))dt \Big) \bigg|^2 \Bigg] d\mu(x,r)  \Bigg\}^{\frac{1}{2}}. 
\end{align}
Then, again by Cauchy-Schwarz's inequality, one observes that
\begin{align*}
\E_{\gamma} \bigg[ \Big| \int_0^T G_t(r) \big(m_{\mu}(t,r) -m_{\nu}(t,r)\big) dt \Big|^2 \bigg]  & \overset{\eqref{ineq:MajDiff}}{\leq} C_T d_T^V(\mu,\nu)^2.
\end{align*}
Moreover, \eqref{ineqphi} gives:
\begin{align*}
\int_{\C} \bigg\{ \int_0^T 2 G_t(r)\bigg(dW_t(x,r) - \frac{m_{\mu}(t,r)+m_{\nu}(t,r)}{2} dt\bigg) \bigg\}^2 d\mu_r(x) & \leq c_T \big( 1+ I(\mu_r|P_r) \big) \int_0^T G^2_t(r) dt.
\end{align*}
Using Jensen's inequality and injecting the last two inequalities in \eqref{ineq:B3} gives:
\begin{align*}
B_3 & \leq C_T \big(1+I(\mu|P)\big)^{\frac{1}{2}} d_T^V(\mu,\nu) \leq C_T \big(1+I(\mu|P)\big) d_T^V(\mu,\nu)
\end{align*}
as $I(.|P) \geq 0$.

As of the last term, we have
\begin{align*}
B_4 & \overset{\eqref{ineqphi}}{\leq} \Bigg( \int_{D} c_T \big(1+I(\mu_r|P_r)\big) \bigg\{\int_0^T \Big(m_{\mu}(t,r) - m_{\nu}(t,r)\Big)^2 dt \bigg\} c_{\mu}(r) d\pi(r) \Bigg)^{\frac{1}{2}}\\
& \overset{\eqref{ineq:MajDiff}}{\leq} C_T \big(1+I(\mu|P)\big)^{\frac{1}{2}} d_T^V(\mu,\nu) \leq C_T \big(1+I(\mu|P)\big) d_T^V(\mu,\nu).
\end{align*}
Hence, we conclude that there exists a constant $C_T$ satisfying
\begin{equation*}
|\Gamma_{2,\nu}(\mu) - \Gamma_2(\mu)| \leq  C_T \big(1+I(\mu|P)\big) \big(d_T^V(\mu,\nu)+\varepsilon\big).
\end{equation*}
Sending $\varepsilon$ to $0$ thus gives the result.

\noindent{\bf Proof of Lemma\ref{lemma1}.(3):}
We proceed exactly as in Lemma 5.(vi)~\cite{cabana-touboul:12}, remarking that $\big(\C \times D, d_T\big)$ is a Polish space.

\end{proof}

\medskip

\subsection{Upper-bound and Tightness}

We have proved that $H=I(.|P)-\Gamma$ is a good rate function, and we now want to show that it is indeed associated with a LDP. We demonstrate here a weak LDP relying on an upper-bound inequality for compact subsets, and tightness of the family $\Big(Q^N\big( \hat{\mu}_N \in \cdot \big)\Big)_N$. To prove the first point, we take advantage of the full LDP followed by $\hat{\mu}_N$ under $\big( Q_{\nu} \big)^{\otimes N}$, and have to control an error term. The second point will rely on the exponential tightness of $P^{\otimes N}$. These proofs are inspired from those of Guionnet in a non-spatial spin-glass model~\cite{guionnet:97}.

\begin{theorem}\label{lemma3}
For any compact subset $K$ of $\M_1^+(\C \times D)$,
\begin{equation*}
\limsup_{N \rightarrow \infty} \frac{1}{N} \log{ Q^N(\hat{\mu}_N \in K)} \leq - \inf_{K} H .
\end{equation*}
\end{theorem}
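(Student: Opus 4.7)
The plan is to reduce the estimate to the full LDP satisfied by $\hat{\mu}_N$ under the tilted product measure $Q_\nu^{\otimes N}$ (given by Sanov, once one identifies $I(\cdot\vert Q_\nu)=H_\nu$ via Theorem~\ref{thm:Qnu}), and then transfer back to $Q^N$ using the Radon--Nikodym identity coming from Lemma~\ref{lemma2} and~\eqref{def:Qnu}:
\begin{equation*}
\frac{dQ^N}{dQ_\nu^{\otimes N}}(\mathbf{x},\mathbf{r})=\exp\bigl\{N\bigl[\bar{\Gamma}(\hat{\mu}_N)-\bar{\Gamma}_\nu(\hat{\mu}_N)\bigr]\bigr\}.
\end{equation*}
The key then lies in controlling the random exponent on small neighborhoods of $\nu$ and optimizing over the center $\nu$.

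Concretely, first I would cover the compact set $K$ by finitely many closed Vaserstein balls $\overline{B(\nu_j,\delta)}$, $j=1,\dots,m$ (possible as $(\M_1^+(\C\times D),d_T^V)$ is Polish), and reduce the problem ball by ball via a union bound. Second, since under $Q_{\nu_j}^{\otimes N}$ the pairs $(X^{i,N},r_i)$ are i.i.d.~with law $Q_{\nu_j}$, Sanov's theorem in $\C\times D$ combined with Theorem~\ref{thm:Qnu} yields
\begin{equation*}
\limsup_{N\to\infty}\frac{1}{N}\log Q_{\nu_j}^{\otimes N}\bigl(\hat{\mu}_N\in\overline{B(\nu_j,\delta)}\bigr)\leq -\inf_{\overline{B(\nu_j,\delta)}} H_{\nu_j}.
\end{equation*}
Third, Lemma~\ref{lemma1}.(1) gives, for every $\mu\in\overline{B(\nu_j,\delta)}$ with $I(\mu\vert P)<\infty$,
\begin{equation*}
H(\mu)\geq H_{\nu_j}(\mu)-C_T\bigl(1+I(\mu\vert P)\bigr)\delta,
\end{equation*}
so the infimum of $H_{\nu_j}$ over $\overline{B(\nu_j,\delta)}$ differs from $\inf_K H$ by a term of order $\delta$ (measures with very large entropy contribute negligibly, as their $H$ is large). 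A final passage $\delta\to 0$ together with the covering reduction produces the announced bound.

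The main obstacle is the transfer step from $Q_{\nu_j}^{\otimes N}$ back to $Q^N$: using the change of measure above,
\begin{equation*}
Q^N\bigl(\hat{\mu}_N\in \overline{B(\nu_j,\delta)}\bigr)=\int_{\{\hat{\mu}_N\in\overline{B(\nu_j,\delta)}\}}\exp\bigl\{N[\bar{\Gamma}(\hat{\mu}_N)-\bar{\Gamma}_{\nu_j}(\hat{\mu}_N)]\bigr\}\,dQ_{\nu_j}^{\otimes N},
\end{equation*}
and one must bound $\bar{\Gamma}(\hat{\mu}_N)-\bar{\Gamma}_{\nu_j}(\hat{\mu}_N)$ uniformly on the ball. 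Since $\hat{\mu}_N$ is singular with respect to $P$, Lemma~\ref{lemma1} cannot be applied directly. My strategy is to use the representation of Lemma~\ref{lemma5.15} to write each summand $\log\E_\gamma\bigl[\exp X^{\hat{\mu}_N}(x^i,r_i)\bigr]-\log\E_\gamma\bigl[\exp X^{\nu_j}(x^i,r_i)\bigr]$ as an It\^o integral against $W(\cdot,r_i)$ plus an ordinary integral, then estimate the integrands using the Lipschitz bounds of Proposition~\ref{prop:MeanAndVarRegularity} (which control $m_\mu-m_\nu$, $K_\mu-K_\nu$ and $\widetilde{K}^t_{\mu,r}-\widetilde{K}^t_{\nu,r}$ by $d_T^V(\mu,\nu)$) and the quadratic bounds of Proposition~\ref{LambdaProperties}. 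The deterministic part is $O(\delta)$ on $\overline{B(\nu_j,\delta)}$; the stochastic contribution is a martingale whose exponential moment under $Q_{\nu_j}^{\otimes N}$ can be bounded via Novikov and a Cauchy--Schwarz splitting, so that, up to an extra factor whose logarithm is $o(N)$, the transfer preserves the Sanov bound. Combining these estimates, letting $\delta\to 0$ and refining the cover yields the desired inequality $\limsup_N N^{-1}\log Q^N(\hat{\mu}_N\in K)\leq-\inf_K H$.
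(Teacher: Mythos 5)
Your proposal follows essentially the same route as the paper: cover $K$ by Vaserstein balls, invoke Sanov plus Theorem~\ref{thm:Qnu} under $Q_{\nu_j}^{\otimes N}$, transfer back via the change of measure coming from Lemma~\ref{lemma2} with a H\"older/Cauchy--Schwarz splitting and martingale bounds (the paper packages this as Lemma~\ref{lemma3.1}), and finally let $\delta\to 0$ using the $d_T^V$-Lipschitz control from Lemma~\ref{lemma1} together with the coercivity $H\geq(1-\iota)I(\cdot|P)-e$. The only minor imprecision is that the correction factor from the transfer has logarithm of order $C_q(\delta)\,N$ rather than $o(N)$; it is only after sending $\delta\to 0$ that this term disappears, which is indeed how the paper and your plan conclude.
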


\begin{proof}
Let $\delta > 0$. as, $\big(\M_1^+(\C \times D),d_T^V \big)$ is a Polish space, we can find an integer $M$ and a family $(\nu_i)_{1\leq i \leq M}$ of $\M_1^+(\C \times D)$ such that
$$ K \subset \bigcup_{i=1}^M B(\nu_i,\delta),$$
where $B(\nu_i,\delta):= \big\{ \mu | d_T^V(\mu,\nu_i) < \delta \big\}$.
A classical result (see e.g.~\cite[lemma 1.2.15]{dembo-zeitouni:09}), ensures that
\begin{equation}\label{ineq:UpperBound1}
\limsup_{N \to \infty} \frac{1}{N} \log{ Q^N\big(\hat{\mu}_N \in K\big)} \leq \max_{1 \leq i \leq M} \limsup_{N \to \infty} \frac{1}{N} \log{Q^N\big(\hat{\mu}_N \in K\cap B(\nu_i,\delta)\big)  } .
\end{equation}
Lemma~\ref{lemma2} yields:
\begin{align*}
Q^N\big(\hat{\mu}_N \in K \cap B(\nu,\delta)\big) & = \int_{\hat{\mu}_N \in K \cap B(\nu,\delta)} \exp\Big\{ N \bar{\Gamma}(\hat{\mu}_N)\Big\} dP^{\otimes N}(\mathbf{x},\mathbf{r}) \\
= \int_{\hat{\mu}_N \in K \cap B(\nu,\delta)} & \exp\Big\{ N \big(\bar{\Gamma}(\hat{\mu}_N)- \bar{\Gamma}_{\nu}(\hat{\mu}_N)\big)\Big\}  \exp\Big\{ N \bar{\Gamma}_{\nu}(\hat{\mu}_N)\Big\} dP^{\otimes N}(\mathbf{x},\mathbf{r}).
\end{align*}
Recall definition \eqref{def:Xmu} and let $(\tilde{X}^{\mu}_i)_{1 \leq i \leq N}$ be a family of independent variables of $\big( \hat{\Omega}, \hat{\mathcal{F}}, \gamma\big)$ with same law as $(X^{\mu}_i)_{1 \leq i \leq N}$. We will denote by $\big(\tilde{G}^{\mu}(r_i)\big)_{1\leq i \leq N}$ the associated independent Gaussian processes.
Then, for any conjugate exponents $(p,q)$,
\begin{align*}
& Q^N\big(\hat{\mu}_N \in K \cap B(\nu,\delta)\big) = \int_{\hat{\mu}_N \in K \cap B(\nu,\delta)} \exp\Big\{ N \big(\bar{\Gamma}(\hat{\mu}_N)- \bar{\Gamma}_{\nu}(\hat{\mu}_N)\big)\Big\} dQ_{\nu}^{\otimes N}(\mathbf{x},\mathbf{r}) \\
& \leq Q_{\nu}^{\otimes N}\big(\hat{\mu}_N \in K \cap B(\nu,\delta)\big)^{\frac{1}{p}}  \bigg(\int_{\hat{\mu}_N \in K \cap B(\nu,\delta)} \exp\Big\{ q N\big( \bar{\Gamma}(\hat{\mu}_N)- \bar{\Gamma}_{\nu}(\hat{\mu}_N)\big)\Big\} dQ_{\nu}^{\otimes N}(\mathbf{x},\mathbf{r}) \bigg)^{\frac{1}{q}} \\
& \leq Q_{\nu}^{\otimes N}\big(\hat{\mu}_N \in K \cap B(\nu,\delta)\big)^{\frac{1}{p}} \Bigg(\int_{\hat{\mu}_N \in K \cap B(\nu,\delta)} \prod_{i=1}^N \mathcal{E}_{\gamma} \bigg( \frac{\exp\big\{ \tilde{X}^{\hat{\mu}_N}_i\big\}}{\mathcal{E}_{\gamma} \big( \exp\big\{ \tilde{X}^{\nu}_i\big\} \big)} \bigg)^q dQ_{\nu}^{\otimes N}(\mathbf{x},\mathbf{r}) \Bigg)^{\frac{1}{q}}
\end{align*}
\begin{align}
& \overset{\text{Jensen}}{\leq} Q_{\nu}^{\otimes N}\big(\hat{\mu}_N \in K \cap B(\nu,\delta)\big)^{\frac{1}{p}} \Bigg( \int_{\hat{\mu}_N \in K \cap B(\nu,\delta)} \E_{\gamma} \Bigg[\Bigg( \prod_{i=1}^N \exp\big\{ q \big( \tilde{X}^{\hat{\mu}_N}_i - \tilde{X}^{\nu}_i \big) \big\} \!\!\Bigg) \frac{\prod_{i=1}^N  \exp{\tilde{X}^{\nu}_i}}{\E_{\gamma} \Big[ \prod_{i=1}^N \exp{\tilde{X}^{\nu}_i} \Big]}  \Bigg]  dQ_{\nu}^{\otimes N} \Bigg)^{\frac{1}{q}} \nonumber \\
& \leq Q_{\nu}^{\otimes N}\big(\hat{\mu}_N \in K \cap B(\nu,\delta)\big)^{\frac{1}{p}}   \Bigg(\underbrace{\int_{\hat{\mu}_N \in K \cap B(\nu,\delta)} \E_{\gamma} \Bigg[ \prod_{i=1}^N \exp\Big\{ q \Big( \tilde{X}^{\hat{\mu}_N}_i - \tilde{X}^{\nu}_i \Big) +\tilde{X}^{\nu}_i\Big\} \Bigg]  dP^{\otimes N}(\mathbf{x},\mathbf{r})}_{=:B_N} \Bigg)^{\frac{1}{q}}. \label{ineq:UpperBound}
\end{align}

The first term of the right hand side of \eqref{ineq:UpperBound} can be controlled by large deviations estimates. Controlling the second term is the object of the following lemma. With these two results, we can now conclude as in \cite[Lemma 4.7]{ben-arous-guionnet:95}.

\end{proof}

\begin{lemma}\label{lemma3.1}
For any real number $q >1$, there exists a strictly positive real number $\delta_q>0$ and a function $C_q:]0,\delta_q[ \to \R$ such that $\lim_{\delta \rightarrow 0} C_q(\delta) =0$ and:
\begin{equation*}	
B_N \leq \exp\{C_q(\delta) N\}.
\end{equation*}
\end{lemma}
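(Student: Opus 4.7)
The plan rests on an algebraic identity expressing the integrand in Doléans-plus-correction form. With $H^{k,i}_t := G^{\mu_k}(r_i)_t + m_{\mu_k}(t,r_i)$ for $\mu_1=\hat{\mu}_N$, $\mu_2=\nu$, and $H^i := qH^{1,i}-(q-1)H^{2,i}$, a direct computation (based on $\int Z_1\,dW + \int Z_2\,dW -\tfrac12\int Z_1^2 -\tfrac12\int Z_2^2 =\int(Z_1+Z_2)dW - \tfrac12\int(Z_1+Z_2)^2 + \int Z_1Z_2$) yields
\[
q\tilde{X}^{\hat{\mu}_N}_i-(q-1)\tilde{X}^\nu_i = \Bigl[\int_0^T H^i_t\,dW_t(x^i,r_i)-\tfrac{1}{2}\int_0^T (H^i_t)^2\,dt\Bigr] + \tfrac{q(q-1)}{2}\int_0^T\bigl(H^{1,i}_t-H^{2,i}_t\bigr)^2\,dt.
\]
The bracketed term is a Doléans-Dade exponent; the quadratic correction vanishes identically when $\hat{\mu}_N=\nu$ and will produce the vanishing-in-$\delta$ contribution.

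By Remark~\ref{muDependence}(2), I am free to realize the family $(G^\mu(r))_{\mu,r}$ on $(\hat{\Omega},\hat{\F},\gamma)$ with any joint law compatible with the prescribed marginals. I pick, for each $r\in D$, the optimal Vaserstein coupling between $G^{\hat{\mu}_N}(r)$ and $G^\nu(r)$ used in the proof of Proposition~\ref{prop:MeanAndVarRegularity}. Combined with $|m_{\hat{\mu}_N}-m_\nu|\leq C_T\delta$ from the same proposition, this yields, on $\{\hat{\mu}_N\in B(\nu,\delta)\}$, the uniform-in-$i$ control
\[
\E_\gamma\!\Bigl[\int_0^T(H^{1,i}_t-H^{2,i}_t)^2\,dt\Bigr]\leq C_T\delta^2.
\]

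I next apply Cauchy--Schwarz under $\gamma$ to separate the Doléans and correction contributions. For the Doléans factor, using $2[\int H^i dW^i-\tfrac12\int(H^i)^2]=[\int 2H^i dW^i-\tfrac12\int(2H^i)^2]+\int(H^i)^2$, Fubini combined with Novikov's identity---valid because, pathwise in $\gamma$, $\prod_i\exp\{\int 2H^i\,dW^i-\tfrac12\int(2H^i)^2\}$ is a $P_{\mathbf r}$-martingale with respect to the joint filtration $\bigvee_j\sigma(W^j_s,s\leq t)$ (each $H^i$ is $\gamma$-pathwise bounded, by the a priori estimates $|m_{\hat{\mu}_N}|\leq\|J\|_\infty/\ls$ and $K_{\hat{\mu}_N}\leq\smax^2/\ls^2$)---reduces the corresponding $P^{\otimes N}$-integral to $\E_\gamma[\exp\{\sum_i\int(H^i_t)^2dt\}]$. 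Via the Gaussian exponential-moment computation from the proof of Proposition~\ref{prop:GammaBehaviour}(2), together with the $\gamma$-independence of $(G^{\mu_k}(r_i))_{i}$ across $i$, this is bounded by $\exp\{NC_{q,T}\}$ uniformly in $\hat{\mu}_N,\nu$. For the correction factor, Gaussian chaos applied termwise using the variance bound above gives $\E_\gamma[\exp\{q(q-1)\sum_i\int(H^{1,i}-H^{2,i})^2dt\}]\leq\exp\{NC_q'(\delta)\}$ with $C_q'(\delta)\to 0$ as $\delta\downarrow 0$, provided $\delta$ is small enough for the relevant moment generating function to converge. Combining the two gives $B_N\leq\exp\{NC_q(\delta)\}$ with $C_q(\delta)\to 0$.

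The key technical obstacle is the Novikov cancellation of the Doléans factor: since $H^i$ depends on all coordinates of $\mathbf{x}$ through $\hat{\mu}_N$, one cannot perform the martingale cancellation coordinate-by-coordinate---one must work pathwise in $\gamma$ under the joint Brownian filtration, and it is the uniform-in-$\mathbf{x}$ pathwise bound on $H^i$ provided by the boundedness of $J,\sigma$ and the lower bound on $\lambda$ that makes this go through. Once this cancellation is in place, both Gaussian-calculus estimates are routine given the variance bound of the second paragraph.
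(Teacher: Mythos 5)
The algebraic identity $q\tilde{X}^{\hat{\mu}_N}_i-(q-1)\tilde{X}^\nu_i = \big[\int H^i dW-\tfrac12\int(H^i)^2\big]+\tfrac{q(q-1)}{2}\int(H^{1,i}-H^{2,i})^2$ with $H^i:=qH^{1,i}-(q-1)H^{2,i}$ is correct and attractive, and your treatment of the correction $C_i=\tfrac{q(q-1)}{2}\int(H^{1,i}-H^{2,i})^2$ is sound: its $\gamma$-variance is $O(\delta^2)$ on $B(\nu,\delta)$, so its exponential $\gamma$-moment converges for small $\delta$ and contributes a term vanishing with $\delta$. The gap is in the Dol\'eans part.

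Once you apply Cauchy--Schwarz, you must estimate $\E_{\gamma\times P^{\otimes N}}[\prod_i\exp\{2D_i\}]$, and your split $2D_i=\big[\int 2H^i dW-\tfrac12\int(2H^i)^2\big]+\int(H^i)^2$ leaves, after the martingale cancellation, a remainder $\int(H^i)^2\,dt$ with coefficient exactly $1$. Unlike $H^{1,i}-H^{2,i}$, the process $H^i$ has $\gamma$-variance that does \emph{not} shrink with $\delta$: when $\hat{\mu}_N=\nu$ one has $H^i=H^{1,i}$, whose variance is $K_\nu(t,t,r_i)$, of order $\smax^2/\ls^2$. Consequently $\E_\gamma\big[\exp\{\sum_i\int_0^T(H^i_t)^2dt\}\big]$ is infinite as soon as $T$ is not small (roughly $T\gtrsim\ls^2/(2\smax^2)$), so the invocation of ``the Gaussian exponential-moment computation from the proof of Proposition~\ref{prop:GammaBehaviour}(2)'' does not apply --- in that proof the quadratic coefficient is $\tfrac{\alpha^2-\alpha}{2}$, which can be made arbitrarily small by letting $\alpha\to 1$, whereas here it is fixed at $1$. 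This contradicts the fact that the lemma must hold for every $T$ (the whole point of the weak LDP, per the remark after Theorem~\ref{thm:Convergence}). Even for $T$ small enough that the moment converges, your Dol\'eans factor contributes a constant $C_{q,T}>0$ that does not go to $0$ with $\delta$, so your alleged $C_q(\delta)$ would \emph{not} tend to $0$ --- which is precisely what the lemma must give.

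Both problems stem from using the fixed exponent $2$ in Cauchy--Schwarz. The paper instead applies H\"older with a \emph{tunable} conjugate pair $(\sigma,\eta)$, so that the ``non-small'' Dol\'eans-type term is isolated as $\E_\gamma\big[\int\exp\{\sigma\sum_i\tilde X^\nu_i\}dP^{\otimes N}\big]^{1/\sigma}$: after the martingale cancellation the residual quadratic has coefficient $\tfrac{\sigma^2-\sigma}{2}$, which can be made arbitrarily small by letting $\sigma\to 1^+$ (with $\sigma$ ultimately coupled to $\delta$), making the Gaussian moment finite for all $T$ and its contribution to $C_q(\delta)$ vanish. The remaining factor $B^N_2$ involves only the difference $\tilde X^{\hat\mu_N}_i-\tilde X^\nu_i$, and there every term picks up a smallness factor from $d^V_T(\hat\mu_N,\nu)<\delta$. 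If you want to keep your elegant identity, you would need to replace Cauchy--Schwarz with H\"older at exponent $p=1+\varepsilon(\delta)$ on the $D_i$-factor, producing a residual coefficient $\tfrac{\varepsilon(\varepsilon+1)}{2}\to 0$; as written, the argument breaks.
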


\begin{proof}
Using H\"older inequality with conjugate exponents $(\sigma, \eta)$, one finds:
\begin{equation}\label{ineqlemma}
B^N  \leq \underbrace{\E_{\gamma} \Bigg[ \int_{(\C \times D )^N} \exp\Big\{\sum_{i=1}^N \sigma \tilde{X}^{\nu}_i \Big\} dP^{\otimes N}(\mathbf{x},\mathbf{r}) \Bigg]^{\frac{1}{\sigma}}}_{=:(B^N_1)^{\frac{1}{\sigma}} }  \underbrace{ \E_{\gamma} \Bigg[ \int_{\hat{\mu}_N \in B(\nu,\delta)}  \prod_{i=1}^N \exp\Big\{ q \eta \big( \tilde{X}^{\hat{\mu}_N}_i -\tilde{X}^{\nu}_i \big) \Big\} dP^{\otimes N}(\mathbf{x},\mathbf{r}) \Bigg]^{\frac{1}{\eta}}}_{=:\big(B^N_2\big)^{\frac{1}{\eta}}}.
\end{equation}
The first term is controlled by martingale property:
\begin{multline*}
B^N_1 = \E_{\gamma} \Bigg[ \int_{D^N} \exp\bigg\{\sum_{i=1}^N \frac{\sigma^2-\sigma}{2} \int_0^T \big( \tilde{G}^{\nu}_t(r_i) + m_{\nu}(t,r_i) \big)^2 dt\bigg\} \\
\times  \int_{\C^N} \exp\bigg\{\sum_{i=1}^N \sigma\int_0^T \big( \tilde{G}^{\nu}_t(r_i) + m_{\nu}(t,r_i) \big) dW_t(x^i,r_i) - \frac{\sigma^2}{2} \int_0^T \big( \tilde{G}^{\nu}_t(r_i) + m_{\nu}(t,r_i) \big)^2 dt\bigg\} dP_{\mathbf{r}}(\mathbf{x}) d\pi^{\otimes N}(\mathbf{r})  \Bigg]\\
\overset{\text{Jensen,Fubini}}{\leq} \int_{D^N} \prod_{i=1}^N \Bigg\{ \int_0^T \E_{\gamma} \Bigg[  \exp\bigg\{ \frac{\sigma(\sigma-1)T}{2} \big( \tilde{G}^{\nu}_t(r_i) + m_{\nu}(t,r_i) \big)^2 \bigg\}\Bigg] \frac{dt}{T}\Bigg\} d\pi^{\otimes N}(\mathbf{r}) \overset{\eqref{eq:GaussianExponentialQuadraticMoment}}{\leq} \exp\Big\{ c_T (\sigma-1) N \Big\},
\end{multline*}
with $c_T$ uniform in space.

Let us control the second term, denoting $\kappa=q \eta$ and supposing that $\delta$ is small enough. By Cauchy-Schwarz's inequality and Fubini's  theorem:
\begin{align*}
& B^N_2 \leq \E_{\gamma} \Bigg[ \int_{(\C \times D)^N} \prod_{i=1}^N  \exp\Big\{ 2 \kappa \int_0^T \Big( \tilde{G}^{\hat{\mu}_N}_t(r_i)-\tilde{G}^{\nu}_t(r_i)+ \big(m_{\hat{\mu}_N}(t,r_i)-m_{\nu}(t,r_i)\big) \Big)dW_t(x^i,r_i)  \\
& - 2\kappa^2 \int_0^T \Big(\tilde{G}^{\hat{\mu}_N}_t(r_i) - \tilde{G}^{\nu}_t(r_i) + \big(m_{\hat{\mu}_N}(t,r_i) -m_{\nu}(t,r_i)\big)\Big)^2 dt \Big\} dP^{\otimes N}(\mathbf{x},\mathbf{r}) \Bigg]^{\frac{1}{2}} \\
& \times  \Bigg\{ \! \int_{\hat{\mu}_N \in B(\nu,\delta)} \! \E_{\gamma} \bigg[ \prod_{i=1}^N \exp\Big\{ 2\kappa^2 \! \int_0^T \! \Big(\tilde{G}^{\hat{\mu}_N}_t(r_i) \!- \!\tilde{G}^{\nu}_t(r_i)\! + \!\big(m_{\hat{\mu}_N}(t,r_i)\! -  \!m_{\nu}(t,r_i)\big)\Big)^2 \!dt\\
& - \kappa \int_0^T \Big(\tilde{G}^{\hat{\mu}_N}_t(r_i)+m_{\hat{\mu}_N}(t,r_i)\Big)^2 - \Big(\tilde{G}^{\nu}_t(r_i) + m_{\nu}(t,r_i)\Big)^2 dt \Big\} \bigg]  dP^{\otimes N}(\mathbf{x},\mathbf{r}) \Bigg\}^{\frac{1}{2}}
\end{align*}
The first term is equal to one by martingale property. For the second term, we remark that:
\begin{multline*}
- \int_0^T \Big(\tilde{G}^{\hat{\mu}_N}_t(r_i)+m_{\hat{\mu}_N}(t,r_i)\Big)^2 - \Big(\tilde{G}^{\nu}_t(r_i) + m_{\nu}(t,r_i)\Big)^2 dt \leq \\
\frac{\delta^{\frac{1}{2}}}{2} \bigg( \frac{1}{\delta} \int_0^T \Big(\tilde{G}^{\hat{\mu}_N}_t(r_i)- \tilde{G}^{\nu}_t(r_i) + \big(m_{\hat{\mu}_N}(t,r_i)  - m_{\nu}(t,r_i)\big)\Big)^2 dt \\
 + \int_0^T \Big(\tilde{G}^{\hat{\mu}_N}_t(r_i)+ \tilde{G}^{\nu}_t(r_i) + \big(m_{\hat{\mu}_N}(t,r_i)+m_{\nu}(t,r_i)\big)\Big)^2 dt \bigg)
\end{multline*}
 so that, by Cauchy-Schwarz's inequality:
 \begin{align*}
B^N_2 & \leq \Bigg\{ \int_{\hat{\mu}_N  \in B(\nu,\delta)} \! \! \! \! \! \!  \E_{\gamma} \bigg[  \prod_{i=1}^N \exp\Big\{ \big(4\kappa^2+\kappa\delta^{-\frac{1}{2}}\big) \int_0^T \big(\tilde{G}^{\hat{\mu}_N}_t(r_i)-\tilde{G}^{\nu}_t(r_i)+ (m_{\hat{\mu}_N}-m_{\nu})(t,r_i)\big)^2 dt \Big\} \bigg] dP^{\otimes N}(\mathbf{x},\mathbf{r}) \Bigg\}^{\frac{1}{4}} \\
& \times \Bigg\{ \int_{\big(\C \times D \big)^N} \prod_{i=1}^N \underbrace{ \E_{\gamma} \bigg[ \exp\Big\{ \kappa\delta^{\frac{1}{2}}\int_0^T \big(\tilde{G}^{\hat{\mu}_N}_t(r_i)+\tilde{G}^{\nu}_t(r_i)+ (m_{\hat{\mu}_N}+m_{\nu})(t,r_i)\big)^2 dt \Big\} \bigg]}_{ \overset{\eqref{eq:GaussianExponentialQuadraticMoment}}{\leq} \exp\Big\{ c_T \kappa\delta^{\frac{1}{2}} \Big\}  } dP^{\otimes N}(\mathbf{x},\mathbf{r})  \Bigg\}^{\frac{1}{4}}.
\end{align*}

Let us control the first term of the product, by taking advantage of the fact that ${\hat{\mu}_N \in B(\nu,\delta)}$. We have, for any $\xi \in \M_1^+\big((\C \times D)^2 \big)$ with marginals $\hat{\mu}_N$ and $\nu$:
\begin{align*}
& \big|m_{\hat{\mu}_N}-m_{\nu}\big|(t,r_i) \overset{\eqref{ineq:MajDiff}}{\leq} C_T d_T^V (\hat{\mu}_N,\nu) \leq C_T \delta,
\end{align*}
and similarly
\begin{align*}
\E_{\gamma} \Big[ & \big(\tilde{G}^{\hat{\mu}_N}_t(r_i)-\tilde{G}^{\nu}_t(r_i)\big)^2 \Big] \leq C \delta^2.
\end{align*}
Moreover, Jensen's inequality gives
\begin{align*}
\Big(\tilde{G}^{\hat{\mu}_N}_t(r_i)-\tilde{G}^{\nu}_t(r_i) + (m_{\hat{\mu}_N}-m_{\nu})(t,r_i)\Big)^2  \leq C \delta^2  + 2\big(\tilde{G}^{\hat{\mu}_N}_t(r_i)-\tilde{G}^{\nu}_t(r_i)\big)^2,
\end{align*}
so that by independence of the $\tilde{G}$ for different locations and \eqref{eq:GaussianExponentialQuadraticMoment}
\begin{align*}
\E_{\gamma} \bigg[ \prod_{i=1}^N \exp\Big\{ \big(4\kappa^2+\kappa\delta^{-\frac{1}{2}}\big) \int_0^T \big(\tilde{G}^{\hat{\mu}_N}_t(r_i)-\tilde{G}^{\nu}_t(r_i)+ (m_{\hat{\mu}_N}-m_{\nu})(t,r_i)\big)^2 dt \Big\} \bigg] \leq \exp\big\{ C_T \big(4\kappa^2+\kappa\delta^{-\frac{1}{2}}\big)\delta^2 N  \big\}.
\end{align*}
Hence, 
\begin{align*}
B^N_2 \leq  \exp\big\{ C_{\kappa}(\delta) N\big\}
\end{align*}
with $C_{\kappa}(\delta) \to 0$ as $\delta \to 0$.
\end{proof}

\begin{theorem}[Tightness]\label{thm:tightness}
For any real number $\varepsilon > 0$, there exists a compact set $K_{\varepsilon}$ of $\M_1^+(\C \times D )$ such that, for any integer $N$,
\begin{equation*}
Q^N(\hat{\mu}_N \notin K_{\varepsilon}) \leq \varepsilon.
\end{equation*}
\end{theorem}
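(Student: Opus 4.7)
The approach is to reduce tightness of the random measure $\hat\mu_N$ under $Q^N$ to a classical Kolmogorov--Čentsov moment estimate on a single neuron, via exchangeability. The pairs $((X^{i,N},r_i))_{i=1}^N$ are exchangeable under $Q^N$: equation~\eqref{eq:Network}, the initial conditions~\eqref{eq:ICNet}, the conditional laws of the $(J_{ij})$ given $\mathbf{r}$, and the laws of the $r_i$ and $W^i$ are all symmetric in the indices. Consequently the deterministic mean measure $\bar\mu_N:=\mathbb{E}_{Q^N}[\hat\mu_N]$ coincides with the marginal law of $(X^{1,N},r_1)$ under $Q^N$. If $\{\bar\mu_N\}_N$ is tight on $\C\times D$, then for each $k\ge 1$ I pick a compact $\tilde K_k\subset\C\times D$ with $\bar\mu_N(\tilde K_k^c)\le\varepsilon\,4^{-k}$ uniformly in $N$; Markov's inequality gives $Q^N(\hat\mu_N(\tilde K_k^c)>2^{-k})\le\varepsilon\,2^{-k}$, and $K_\varepsilon:=\bigcap_{k\ge 1}\{\mu\in\M_1^+(\C\times D):\mu(\tilde K_k^c)\le 2^{-k}\}$ is closed, tight, hence compact in $\M_1^+(\C\times D)$ by Prokhorov's theorem. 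A union bound then yields $Q^N(\hat\mu_N\notin K_\varepsilon)\le\varepsilon$.

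Since $D$ is compact by assumption (A4), tightness of $\bar\mu_N$ reduces to tightness of the law of $X^{1,N}$ on $\C([-\bar\tau,T],\R)$. The restriction to $[-\bar\tau,0]$ has law $\int_D \mu_0(r)\,d\pi(r)$, which is tight by~\eqref{hyp:spaceRegInitCond}. For the part on $[0,T]$ I invoke the Kolmogorov--Čentsov criterion and aim at
\[
\sup_{N\ge 1}\mathbb{E}_{Q^N}\big[|X^{1,N}_t-X^{1,N}_s|^p\big]\;\le\;C\,|t-s|^{1+\beta},\qquad 0\le s\le t\le T,
\]
for some $p>2$ and $\beta>0$. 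Writing the increment via~\eqref{eq:Network}, the Brownian contribution is bounded by $C_p\lVert\lambda\rVert_\infty^p(t-s)^{p/2}$. The interaction contribution $\int_s^t\sum_j J_{1j}S(X^{j,N}_{u-\tau(r_1,r_j)})\,du$ is controlled by first conditioning on $\mathbf{r}$ and on the trajectories $(X^{j,N})_j$: under $\mathcal{P}_J$, the integrand is Gaussian with mean bounded by $\lVert J\rVert_\infty$ and variance bounded by $\lVert\sigma\rVert_\infty^2$ (since $0\le S\le 1$), both uniformly in $N$, so all its moments are uniformly bounded and Jensen applied to the time integral produces a factor $(t-s)^p$.

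The main technical obstacle is the drift term $\int_s^t f(r_1,u,X^{1,N}_u)\,du$: Lipschitz continuity of $f$ gives only linear growth in $X^{1,N}$, and therefore the increment bound requires a uniform-in-$N$ a priori moment estimate of the form $\sup_{N}\mathbb{E}_{Q^N}[\sup_{u\le T}|X^{1,N}_u|^p]<\infty$ for some $p>2$. Such an estimate is obtained via a standard Gronwall argument applied to~\eqref{eq:Network}, using the uniform $L^p$ bound on the interaction established above together with the integrability of the initial condition (the additional integrability beyond the second moment of~\eqref{hyp:spaceRegInitCond} can be gained from Brownian smoothing on an arbitrarily small initial interval if necessary). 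Once this a priori bound is secured, the drift contributes a factor of order $(t-s)^p$, the three contributions combine into the required Kolmogorov--Čentsov estimate, and the two reductions above conclude the proof.
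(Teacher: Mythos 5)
Your proposal takes a genuinely different route from the paper. The paper proves tightness by combining the relative entropy inequality~\eqref{eq:IneqRelativeEntropy} with the \emph{exponential} tightness of $(P^{\otimes N})_N$ (available from Sanov's theorem since $P^{\otimes N}$ is a product measure), and then controls $I(Q^N|P^{\otimes N})$ by a Girsanov computation: the density $\der{Q^N}{P^{\otimes N}}$ is written via Lemma~\ref{lemma5.15} as an exponential martingale of the drift $O_{\hat{\mu}_N}$, and a Gronwall bound on $\varphi(t,\mathbf{r})$ gives $I(Q^N|P^{\otimes N})=O(N)$ uniformly in $N$; tightness then follows from the classical transfer estimate. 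This approach avoids any moment estimate on $X^{1,N}$ under $Q^N$. Your strategy — reduce via exchangeability and Prokhorov to tightness of the deterministic mean measures $\bar\mu_N$, then prove a Kolmogorov--Čentsov estimate — is a standard and attractive alternative, and the reduction step (exchangeability, Markov, union bound, closedness and Prokhorov compactness of $K_\varepsilon$) is correct.

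The problem lies in the moment estimate. You control the interaction term by \emph{conditioning on $\mathbf{r}$ and the trajectories $(X^{j,N})_j$} and asserting that under $\mathcal{P}_J$ the sum $\sum_j J_{1j}S(X^{j,N}_{u-\tau(r_1,r_j)})$ is Gaussian with mean $\leq\lVert J\rVert_\infty$ and variance $\leq\smax^2$. This is incorrect under $Q^N$: the trajectories $(X^{j,N})_j$ are \emph{not} independent of the synaptic weights $(J_{1j})_j$. Neuron $1$ feeds into every other neuron through the $J_{j1}$, so $X^{j,N}$ depends on $X^{1,N}$, which in turn depends on $J_{1j}$; the conditional law of $(J_{1j})_j$ given the trajectories is the posterior of a Gaussian prior under the Girsanov likelihood $\der{Q^N_{\mathbf{r}}(\cdot)}{P_{\mathbf{r}}}$, not the prior $\mathcal{P}_J$ itself. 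Without this (false) conditional Gaussianity the sum does not admit an obvious uniform-in-$N$ bound: the crude estimate $|\sum_j J_{1j}S(X^{j,N})|\leq\sum_j|J_{1j}|$ blows up like $\sqrt{N}$, and trying to repair the argument by a change of measure to $P^{\otimes N}$ (under which $\mathbf{x}$ and $J$ \emph{are} independent) forces one to control $\Exp_{P^{\otimes N}}\big[(\der{Q^N}{P^{\otimes N}})^q\big]$ for some $q>1$, which — as the remark following Theorem~\ref{thm:Convergence} indicates — is only available for $T<\frac{\ls^2}{2\smax^2\bmax^2}$. Controlling exactly this correlation between $J$ and the coupled trajectories is what the paper's entropy/Gronwall argument is designed to do, so your proof is currently missing a real idea rather than a routine verification. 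A smaller, acknowledged issue is that the initial condition \eqref{hyp:spaceRegInitCond} only guarantees second moments of $\bar{x}^0(r)$, whereas the Kolmogorov–Čentsov criterion on $[0,T]$ together with the Lipschitz drift $f$ requires $p$-th moments of $X^{1,N}_0$ for some $p>2$; Brownian smoothing does not repair the initial segment $[-\bar{\tau},0]$ and does not remove the dependence of $X_t$ on $X_0$ through $f$.
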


\begin{proof}
The proof of this theorem consists in using the relative entropy inequality~\eqref{eq:IneqRelativeEntropy} and the exponential tightness of the sequence $(P^{\otimes N})_N$. The reader shall refer to \cite[Theorem 2]{cabana-touboul:12}, and remark that
\begin{align*}
\der{Q^N}{P^{\otimes N}}(\mathbf{x},\mathbf{r}) & \overset{\eqref{lemma2}}{=} \prod_{i=1}^N \E_{\gamma} \Big[ \exp\big\{ X^{\hat{\mu}_N}(x^i,r_i)\big\}\Big]\\
& \overset{\eqref{lemma5.15}}{=} \prod_{i=1}^N \exp\bigg\{\int_0^T O_{\hat{\mu}_N}(t,x^i,r_i)dW_t(x^i,r_i)-\frac{1}{2}\int_0^T O_{\hat{\mu}_N}(t,x^i,r_i)^2 dt\bigg\},
\end{align*}
where
\begin{align*}
O_{\hat{\mu}_N}(t,x,r)& := \E_{\gamma} \bigg[ \Lambda_t\big(G^{\hat{\mu}_N}(r)\big) G^{\hat{\mu}_N}_t(r) L^{\hat{\mu}_N}_t(x,r) \big) \bigg] + m_{\hat{\mu}_N}(t,r).
\end{align*}

to obtain inequality:
\begin{align}
& I(Q^{N}|P^{\otimes N}) = N  \int_{(\C \times D)^N} \bigg\{\int_0^T O_{\hat{\mu}_N}(t,x^1,r_1) dW_t(x^1,r_1) -\frac{1}{2} \int_0^T O_{\hat{\mu}_N}(t,x^1,r_1)^2 dt \bigg\} dQ^N(\mathbf{x},\mathbf{r}) \nonumber \\
& \overset{\text{Fubini}}{\leq} N \bigg\{ \int_{D^N} \int_0^T \underbrace{\int_{\C^N}  \E_{\gamma} \bigg[ \Lambda_t\big(G^{\hat{\mu}_N}(r_1)\big) G^{\hat{\mu}_N}_t(r_1) L^{\hat{\mu}_N}_t(x^1,r_1) \big) \bigg]^2 dQ^{N}_{\mathbf{r}}(\mathbf{x})}_{\varphi(t,\mathbf{r})} dt d\pi^{\otimes N}(\mathbf{r}) + \frac{\Jmax^2 T}{\ls^2} \bigg\}.
\end{align}

We then bound $\varphi(t,\mathbf{r})$ uniformly in space to conclude:
\begin{equation*}
\sup_{t \leq T} \varphi(t,\mathbf{r}) \leq  2\frac{\smax^4 T}{\ls^4}\exp{\Big\{2\frac{\smax^4 T}{\ls^4 } \Big\} }.
\end{equation*}

\end{proof}

\section{Identification of the mean-field equations}\label{sec:LimitIdentification}
We have seen that the series of empirical measures $\big(\hat{\mu}_N\big)_N$ satisfies a large deviations principle of speed $N$, and with good rate function $H$. In order to identify the limit of the system, we study in this section the minima of the functions $H$, and characterize them through an implicit equation. In the spin-glass model investigated in~\cite{ben-arous-guionnet:95}, existence and uniqueness of solutions was made difficult by the fact that the drift was not considered Lipschitz continuous. Moreover, the characterization of the possible minima of the good rate function $H$ was achieved through an intricate variational study. Here, we propose another approach that substantially simplifies this characterization. Moreover, because of the regularity of our dynamics, we propose an original contraction argument to show that the good rate function $H$ admits a unique minimum, proof that was yet to be developed in the context of the neuronal equations${}^1$ \footnote{${}^1$ For instance, in the non-spatialized case treated in~\cite{cabana-touboul:12} was used a strong assumption of linearity of the intrinsic dynamics (our function $f$) which implied that solutions were Gaussian, special case for which moment methods were used (see \cite{faugeras-touboul-etal:09}).}.

\begin{lemma}\label{Qcharac}
	Let $\mu$ be a probability measure on $\C \times D$ which minimizes $H$. Then
\begin{equation}\label{eq:densityMinimum}
	\mu \simeq P, \qquad \mu = Q_{\mu},
\end{equation}
where $\mu \to Q_{\mu}$ introduced in \eqref{def:Qnu} is well-defined from $\M_1^+(\C \times D) \to \M_1^+(\C \times D)$.
\end{lemma}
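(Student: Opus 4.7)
The core of the argument is to recognize the identity
$$H(\mu) = I(\mu|Q_\mu)$$
valid for every $\mu$ with $I(\mu|P) < \infty$. Indeed, by the very definition $\Gamma(\mu) = \Gamma_\mu(\mu)$, we have $H(\mu) = I(\mu|P) - \Gamma_\mu(\mu) = H_\mu(\mu)$, and Theorem~\ref{thm:Qnu} rewrites $H_\mu(\mu) = I(\mu|Q_\mu)$. In particular, $H \geq 0$ on its effective domain, with equality precisely when $\mu = Q_\mu$. Because a minimizer $\mu$ necessarily satisfies $H(\mu) < \infty$, hence $I(\mu|P) < \infty$ and $\mu \ll P$, the equivalence $\mu \simeq P$ will follow as soon as $\mu = Q_\mu$ is proved, since $Q_\mu \simeq P$ was already established within the proof of Theorem~\ref{thm:Qnu}.

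My plan is to show $H(\mu) = 0$ by a perturbation argument. The main tool is the more general identity, valid for any $\nu$ with $I(\nu|P) < \infty$:
$$H(\nu) = I(\nu|Q_\mu) + \bigl[\Gamma_\mu(\nu) - \Gamma(\nu)\bigr],$$
which follows from $dQ_\mu = \exp(F_\mu)\,dP$ and the chain-rule decomposition $I(\nu|P) = I(\nu|Q_\mu) + \int F_\mu\,d\nu = I(\nu|Q_\mu) + \Gamma_\mu(\nu)$. I would then test along the convex line $\mu_t := (1-t)\mu + tQ_\mu$, $t \in [0,1]$. Strict convexity of $I(\cdot|Q_\mu)$, combined with $I(Q_\mu|Q_\mu) = 0$, yields the Gâteaux derivative
$$\tfrac{d}{dt} I(\mu_t|Q_\mu)\big|_{t=0^+} = \int \log\tfrac{d\mu}{dQ_\mu}\,dQ_\mu - I(\mu|Q_\mu) \leq -H(\mu),$$
strictly negative unless $\mu = Q_\mu$. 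For the correction term $\Gamma_\mu(\mu_t) - \Gamma(\mu_t) = \int(F_\mu - F_{\mu_t})\,d\mu_t$, a Taylor expansion (the maps $m_\nu$, $K_\nu$ are linear in $\nu$, so $F_\nu$ is Fréchet-differentiable) gives a first-order correction of the form $-t\int F'_\mu(Q_\mu - \mu)\,d\mu + o(t)$.

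The main obstacle is to show that this correction derivative vanishes: $\int F'_\mu(h)\,d\mu = 0$ for every signed perturbation $h$ with $\int h = 0$. This is a self-averaging identity that should follow from the explicit martingale representation of Lemma~\ref{lemma5.15}, $F_\mu = \int_0^T O_\mu\,dW_t - \tfrac{1}{2}\int_0^T O_\mu^2\,dt$: the derivative $F'_\mu(h)$ splits into a stochastic integral (mean-zero under suitable reference) and a quadratic-variation term that cancel once averaged against $\mu$. If this vanishing fails to hold cleanly at an arbitrary $\mu$, the alternative is to bypass the explicit derivative: use Lemma~\ref{lemma1}.(1) to get $|\Gamma_\mu(\mu_t) - \Gamma(\mu_t)| \leq C_T(1+I(\mu_t|P))\,d_T^V(\mu_t,\mu) = O(t)$, combine with strict convexity to bound $H(\mu) \leq C_T\,d_T^V(\mu,Q_\mu)$, and then use an entropy-transport inequality (Talagrand/Pinsker type, valid since $Q_\mu$ has a Girsanov density with bounded drift) to close the loop on $H(\mu)$ and conclude $H(\mu) = 0$.

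Combining these ingredients yields $\frac{d}{dt}H(\mu_t)|_{t=0^+} \leq -H(\mu)$, and minimality imposes $\frac{d}{dt}H(\mu_t)|_{t=0^+} \geq 0$, so $H(\mu) \leq 0$. With $H(\mu) \geq 0$ already established, we obtain $H(\mu) = 0$, hence $\mu = Q_\mu$, and finally $\mu \simeq P$ by transfer from the equivalence $Q_\mu \simeq P$.
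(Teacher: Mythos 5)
You correctly identify the central identity $H(\mu) = H_{\mu}(\mu) = I(\mu|Q_{\mu})$ coming from Theorem~\ref{thm:Qnu}, and you correctly observe that $\mu \simeq P$ follows from $\mu = Q_{\mu}$ together with $Q_{\mu} \simeq P$. Up to that point you are doing essentially what the paper does. Where you diverge is in how you establish $H(\mu)=0$ at a minimizer, and this is where a genuine gap appears.

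The paper needs no perturbation argument at all: since $H$ is the good rate function governing a weak LDP (Theorem~\ref{lemma3}) for the tight family $\big(Q^N(\hat\mu_N\in\cdot)\big)_N$ (Theorem~\ref{thm:Tightness}), one has $\inf_{K_\varepsilon} H \leq 0$ for the tightness compacts $K_\varepsilon$, while $H\geq 0$ (e.g.\ from $H(\mu)=I(\mu|Q_\mu)$, or from Proposition~\ref{prop:GammaBehaviour}.(1)). Hence $\inf H = 0$, and any minimizer has $H(\mu)=0$; the identity then gives $I(\mu|Q_\mu)=0$, i.e.\ $\mu=Q_\mu$. This is the entire argument, and it is airtight.

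Your Gâteaux-perturbation route, by contrast, rests on two unproven claims. First, the ``self-averaging'' identity $\int F'_\mu(h)\,d\mu=0$ for every mean-zero $h$ is asserted but not established; nothing in Lemma~\ref{lemma5.15} obviously makes this hold at an arbitrary $\mu$, and without it the first-order correction $-\int F'_\mu(Q_\mu-\mu)\,d\mu$ is the same order $O(t)$ as the main term $-H(\mu)$, so it may swamp or reverse the sign of $\frac{d}{dt}H(\mu_t)|_{t=0^+}$. Second, your fallback via Lemma~\ref{lemma1} and a Talagrand-type inequality cannot close the loop: even granting a $T_2$-inequality $d_T^V(\mu,Q_\mu)^2 \leq c\,I(\mu|Q_\mu)=c\,H(\mu)$ (itself not established for $Q_\mu$), combining it with your claimed $H(\mu)\leq C_T\,d_T^V(\mu,Q_\mu)$ only yields $H(\mu)\leq c\,C_T^2$, a finite bound, not $H(\mu)=0$. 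In short: the hard work has already been done in Theorem~\ref{thm:Qnu} and in the LDP upper bound and tightness theorems; the one missing ingredient is simply to invoke that a good rate function attached to a tight family of laws on $\M_1^+(\C\times D)$ has infimum zero.
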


\begin{proof}
Let $\mu \in \M_1^+(\C \times D)$ that minimizes $H$, and define the probability measure $Q_{\mu} \in \M_1^+(\C \times D)$ as in \eqref{def:Qnu}:
\[
\forall (x,r) \in \C \times D, \quad \der{Q_{\mu}}{P}(x,r):= \E_{\gamma}\Big[ \exp\left\{ X^{\mu}(x,r) \right\} \Big],
\]
which is equivalent to $P$ by Theorem~\ref{thm:Qnu}.
As $H$ is a good rate function, its minimal value must be $0$, so that $H(\mu)=I(\mu|P)-\Gamma(\mu)=0$. This imply by Proposition~\ref{prop:GammaBehaviour} that $I(\mu|P)=\Gamma(\mu)< +\infty$, which in turn implies $\mu \ll P$. Theorem~\ref{thm:Qnu} ensures that $\forall \nu \in \M_1^+(\C \times D)$, $H_{\mu}(\nu)=I(\nu|Q_{\mu})=I(\nu|P)-\Gamma_{\mu}(\nu)$. In particular,
\[
I(\mu|Q_{\mu})= H_{\mu}(\mu)=I(\mu|P)-\Gamma_{\mu}(\mu)=I(\mu|P)-\Gamma(\mu)=H(\mu)=0,
\]
so that $\mu=Q_{\mu}$ ${}^2$ \footnote{${}^2$ For the properties of the relative entropy, see \cite{deuschel-stroock:89}.}. Furthermore $Q_{\mu} \simeq P$ is a consequence of $\bar{Q}_{\mu} \simeq P$, $I(Q_{\mu}|\bar{Q}_{\mu})<+{\infty}$ and $\der{Q_{\mu}}{\bar{Q}_{\mu}}>0$.
\end{proof}

We now prove that there exists a unique probability measure satisfying~\eqref{eq:densityMinimum}.

\begin{theorem}\label{thm:UniquenessOfTheMinimum} 
$\mu \to Q_{\mu}$ admits a unique fixed point on $\M_1^+(\C \times D)$.
\end{theorem}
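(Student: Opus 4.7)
The plan is to interpret $\mu\mapsto Q_{\mu}$ probabilistically as arising from an auxiliary non-Markovian SDE on path space, and then reduce the theorem to a Banach-type contraction on $(\M_1^+(\C\times D),d_T^V)$ via a synchronous coupling and the Lipschitz estimates of Proposition~\ref{prop:MeanAndVarRegularity}. This simultaneously delivers existence and uniqueness of the fixed point.

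\textbf{Step 1 (representation of $Q_\mu$).} I would first verify that, for every $\mu\in\M_1^+(\C\times D)$, $Q_{\mu}$ is the joint law of $(X^{\mu},r)$, where $r\sim\pi$, $B$ is an independent standard Brownian motion, $G^{\mu}(r)$ is (conditionally on $r$) an independent centered Gaussian process with covariance $K_{\mu}(\cdot,\cdot,r)$, the initial segment is drawn from $\mu_0(r)$, and $X^{\mu}$ solves
\[
dX^{\mu}_t = f(r,t,X^{\mu}_t)\,dt + \lambda(r)\bigl(G^{\mu}_t(r) + m_{\mu}(t,r)\bigr)\,dt + \lambda(r)\,dB_t.
\]
The verification is: condition on $G^{\mu}$, apply Girsanov (justified by Novikov as in the proof of Theorem~\ref{thm:Qnu}), and then take $\E_{\gamma}$-expectation to recover $\E_{\gamma}[\exp X^{\mu}(x,r)]\,dP(x,r)$. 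A fixed point is thus the law of a non-Markovian McKean--Vlasov SDE in which the mean and covariance of the auxiliary Gaussian input are themselves functionals of the solution's own law.

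\textbf{Step 2 (contraction).} Given $\mu,\nu\in\M_1^+(\C\times D)$, pick an $\varepsilon$-optimal $d_T^V$-coupling $\xi$ of $(\mu,\nu)$; using the two-dimensional Gaussian construction of Proposition~\ref{prop:MeanAndVarRegularity} and display~\eqref{kxi}, build from $\xi$ a joint process $(G^{\mu}(r),G^{\nu}(r))$, and solve the two SDEs of Step~1 driven by the same $r$, $B$ and initial path, so that $(X^{\mu},r)\sim Q_{\mu}$ and $(X^{\nu},r)\sim Q_{\nu}$. Lipschitz continuity of $f$, boundedness of $\lambda$, and Cauchy--Schwarz give
\[
\Vert X^{\mu}-X^{\nu}\Vert_{\infty,t}^2 \leq C_T\Bigl\{\int_0^t\Vert X^{\mu}-X^{\nu}\Vert_{\infty,s}^2\,ds + \int_0^t(G^{\mu}_s(r)-G^{\nu}_s(r))^2\,ds + \int_0^t(m_{\mu}-m_{\nu})(s,r)^2\,ds\Bigr\}.
\]
Taking expectation over the product probability space, invoking Proposition~\ref{prop:MeanAndVarRegularity} to absorb the last two terms into $C_T t\,d_T^V(\mu,\nu)^2 + O(\varepsilon)$, and applying Gronwall's lemma yields
\[
d_T^V(Q_{\mu},Q_{\nu})^2 \leq \Exp\bigl[\Vert X^{\mu}-X^{\nu}\Vert_{\infty,T}^2\bigr] \leq C_T\,T\,e^{C_T T}\, d_T^V(\mu,\nu)^2,
\]
the first inequality using that $((X^{\mu},r),(X^{\nu},r))$ is a coupling of $Q_{\mu},Q_{\nu}$ with identical spatial coordinate (so $d_T$ collapses to $\Vert\cdot\Vert_{\infty,T}$). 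For $T$ small enough that $C_T T\,e^{C_T T}<1$, $\mu\mapsto Q_{\mu}$ is a strict contraction on the closed subset of $\M_1^+(\C\times D)$ consisting of measures with the prescribed initial marginal and finite second moment, equipped with $d_T^V$ --- a complete metric space --- and Banach's fixed point theorem gives both existence and uniqueness of the fixed point on $[0,T]$.

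\textbf{Main obstacle.} The only genuine difficulty is the $e^{C_T T}$ factor, which destroys the contraction for large $T$. I would handle this by working from the outset with a weighted Vaserstein metric built on the weighted sup-norm $\sup_{t\leq T}e^{-\alpha t}|x_t|$: choosing $\alpha$ large enough relative to $C_T$ absorbs the exponential blow-up and turns $\mu\mapsto Q_{\mu}$ into a contraction on $[0,T]$ for arbitrary $T>0$, so that Banach applies directly. A more concrete alternative is to iterate the short-time argument: once $Q^1=Q^2$ on $[-\bar{\tau},T_0]$, restart the coupling from $T_0$, use the fact that $\tau\geq 0$ ensures the drift on $[T_0,2T_0]$ depends on past values only through the now-equal history plus the free part on $[T_0,2T_0]$, and rerun Gronwall to extend the equality, step by step, to the whole $[0,T]$.
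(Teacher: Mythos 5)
Your proposal is correct and follows the same overall strategy as the paper (Girsanov-based SDE representation of $Q_\mu$, synchronous coupling with a bivariate Gaussian built from a near-optimal $\xi$, a Gronwall estimate, and a contraction argument), but it is meaningfully streamlined in one respect and slightly less careful in two others.

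The genuine simplification is in Step~1. The paper represents $Q_{\mu,r}$ as the law of an SDE with the $\mathcal{F}^{\tilde W}_t$-adapted drift $O^{\tilde W}_\mu(t,r)$ obtained from Lemma~\ref{lemma5.15}. Bounding the difference $O^{\tilde W}_\mu-O^{\tilde W}_\nu$ then forces them to control $\E_\gamma[\Lambda_t(G^\mu)G^\mu_t\tilde L^\mu_t-\Lambda_t(G^\nu)G^\nu_t\tilde L^\nu_t]$, which brings in \eqref{ineq:DiffLambda}, Isserlis' theorem, and a secondary Gronwall on stochastic integrals. You instead note that $\E_\gamma[\exp X^\mu]\,dP_r$ is, by Fubini, simply the $\gamma$-mixture of the conditional Girsanov tilts, i.e., $Q_{\mu,r}$ is the law of $X^\mu$ solving $dX^\mu_t=f\,dt+\lambda(G^\mu_t(r)+m_\mu(t,r))\,dt+\lambda\,dB_t$ with $G^\mu$ drawn independently of $B$. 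This bypasses Lemma~\ref{lemma5.15} entirely and reduces the drift comparison in the coupling to $(G^\mu-G^\nu)^2$ and $(m_\mu-m_\nu)^2$, which Proposition~\ref{prop:MeanAndVarRegularity} controls directly. That is a real gain.

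Two caveats. (i) In your Step~2 you bound the Gronwall output by $C_TTe^{C_TT}d_T^V(\mu,\nu)^2$, discarding the time structure, which then forces a weighted metric or a restart argument. The paper keeps the convolution structure (the coupling cost at time $s$ inside $\int_0^t\cdot\,ds$) and iterates to get $C_T^{n+1}t^{n+1}/(n+1)!$, concluding by the ``iterate is a contraction'' lemma. Your coupling estimate actually produces the same convolution structure if you do not replace $d_s$ by $d_T$ prematurely, so the Picard-iteration finish is available to you as well and avoids both workarounds. (ii) You integrate over $r\sim\pi$ without comment, i.e., you implicitly use joint measurability of $(r,\omega)\mapsto\|X^\mu(r)-X^\nu(r)\|_{\infty,T}^2$; the paper devotes a non-trivial portion of the proof to establishing continuity in $r$ of the coupled solutions precisely to license this integration. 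This is a fillable gap, but it should be acknowledged.
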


\begin{proof}
Lemma \eqref{lemma5.15} gives
\begin{align*}
\der{Q_{\mu}}{P}(x,r) = \exp\bigg\{\int_0^T O_{\mu}(t,x,r)dW_t(x,r)-\frac{1}{2}\int_0^T O_{\mu}(t,x,r)^2dt\bigg\},
\end{align*}
where 
\begin{align*}
O_{\mu}(t,x,r)& := \E_{\gamma} \Big[ \Lambda_t\big( G^{\mu}(r) \big) G^{\mu}_t(r) L^{\mu}_t(x,r) \Big]+ m_{\mu}(t,r).
\end{align*}
Let $\mu \in \M_1^+\big( \C \times D \big), r \in D$, and recall that $dQ_{\mu}(x,r):= dQ_{\mu,r}(x)d\pi(r)$. By Girsanov's theorem $Q_{\mu,r}$ is the law of $\big(x^{\mu}_t(r)\big)_{t \in [0,T]}$, the unique strong solution of the SDE (see lemma~\ref{lem:ExistenceAndUnicityTilde})
\begin{equation}\label{SDE:uniqueness}
\left\{
  \begin{array}{ll}
  dx_t^{{\mu}}(r) = f(r,t,x_t^{{\mu}}(r)) dt + O^{\tilde{W}}_{\mu}(t,r) dt + \lambda(r) d\tilde{W}_t\\
    \big(x^{{\mu}}_s(r)\big)_{-\bar{\tau} \leq s \leq 0} = \big(\bar{x}^0_s(r)\big)_{-\bar{\tau} \leq s \leq 0}.
  \end{array}
\right.
\end{equation}
where $\tilde{W}$ is a $\mathbbm{P}$-Brownian motion,
\[
O^{\tilde{W}}_{\mu}(t,r):= \lambda(r) \E_{\gamma} \Big[ \Lambda_t\big( G^{\mu}(r) \big) G^{\mu}_t(r) \tilde{L}^{\mu}_t(r) \Big]+\lambda(r) m_{\mu}(t,r),
\]
\[
\tilde{L}^{\mu}_t(r):= \int_0^t G^{\mu}_s(r) \Big(d\tilde{W}_s-  m_{\mu}(s,r)ds\Big),
\]
and $\bar{x}^0(r) \in \mathcal{C}_{\tau}$ is the version of $\mu_0(r)$ of hypothesis \eqref{hyp:spaceRegInitCond}. It is easy to see that this process has a bounded second moment under the current assumptions. Let also $\nu \in \M_1^+\big( \C \times D \big)$, and let $x^{\nu}_t(r)$ be the unique strong solution of:
\begin{equation*}
\left\{
  \begin{array}{ll}
  dx_t^{{\nu}}(r) = f(r,t,x_t^{{\nu}}(r)) dt + O^{\tilde{W}}_{\nu}(t,r) dt + \lambda(r) d\tilde{W}_t\\
    \big(x^{{\nu}}_s(r)\big)_{-\bar{\tau} \leq s \leq 0} = \big(\bar{x}^0_s(r)\big)_{-\bar{\tau} \leq s \leq 0},
  \end{array}
\right.
\end{equation*}
where both the initial condition $\bar{x}^0(r)$ and the driving Brownian motion $(\tilde{W}_{t})$ are the same as for the definition of \eqref{SDE:uniqueness}. We have
\begin{align}
& \big( x^{\mu}_t(r) - x^{\nu}_t(r) \big) = \int_0^t \big(f(r,s,x^{\mu}_{s}(r))- f(r,s,x^{\nu}_s(r))+ \lambda(r) ( m_{\mu}(s,r)- m_{\nu}(s,r)\big) ds \nonumber\\
& + \lambda(r) \int_0^t \E_{\gamma} \bigg[ \Lambda_s\big( G^{\mu}(r) \big) G^{\mu}_s(r) \tilde{L}^{\mu}_s(r) - \Lambda_s\big( G^{\nu}(r) \big) G^{\nu}_s(r) \tilde{L}^{\nu}_s(r) \bigg] ds \label{ineq:FixPointAverage}.
\end{align}
As in proposition ~\ref{prop:MeanAndVarRegularity} we can obtain:
\begin{align*}
\lambda(r) (m_{\mu}(s,r)- m_{\nu}(s,r)) & \leq C_T d_s^V(\mu,\nu).
\end{align*}
Let $\xi \in \M_1^+\big((\C \times D)^2\big)$ with marginals $\mu$ and $\nu$, and let $\big(G,G'\big)$ be a bi-dimensional centered Gaussian process on the probability space $\big(\hat{\Omega}, \hat{\mathcal{F}}, \gamma \big)$ with covariance $K_{\xi}\big(\cdot, \cdot,r\big)$ given in \eqref{kxi}. Observe that
\begin{align}
& \E_{\gamma} \bigg[ \Lambda_t\big( G^{\mu}(r) \big) G^{\mu}_t(r) \tilde{L}^{\mu}_t(r) - \Lambda_t\big( G^{\nu}(r) \big) G^{\nu}_t(r) \tilde{L}^{\nu}_t(r) \bigg]= \E_{\gamma} \Big[ \Lambda_s(G) G_s L_s - \Lambda_s( G') G'_s L'_s \Big] \nonumber \\
& = \E_{\gamma} \Big[ \big(\Lambda_t(G) -\Lambda_t( G') \big) G_t L_t \Big] + \E_{\gamma} \Big[ \Lambda_t( G') \big(G_t -G'_t \big) L_t \Big] + \E_{\gamma} \Big[ \Lambda_t( G')  G'_t \big( L_t - L'_t\big) \Big]\nonumber \\
& \overset{\mathrm{C.S.}}{\leq} \E_{\gamma} \big[L_t^2 \big]^{\frac{1}{2}} \bigg( \E_{\gamma} \Big[ \big(\Lambda_t( G ) -\Lambda_t( G') \big)^2 G_t^2 \Big]^{\frac{1}{2}}+ \E_{\gamma} \Big[ \Lambda_t( G')^2 \big(G_t -G'_t\big)^2 \Big]^{\frac{1}{2}}\bigg) + \E_{\gamma} \Big[ \Lambda_t( G')^2  {G'_t}^2 \Big]^{\frac{1}{2}} \E_{\gamma} \Big[\big( L_t - L'_t\big)^2\Big]^{\frac{1}{2}} \label{ineq:uniquenesstheorem1}
\end{align}
where $L_t:= \int_0^t G_s d\big(d\tilde{W}_s- m_{\mu}(s,r)ds\big)$, and $L'_t:=\int_0^t G'_s d\big(d\tilde{W}_s- m_{\mu}(s,r)ds\big)$.

On the one hand, relying on \eqref{ineq:DiffLambda}, \eqref{ineq:BoundOnLambda} and Isserlis' theorem, we can show as in Proposition~\ref{prop:MeanAndVarRegularity} that there exists $C_T>0$ such that:
\begin{align*}
\E_{\gamma} \bigg[ \Big(\Lambda_t(G) -\Lambda_t(G') \Big)^2 G_t^2 \bigg]+ \E_{\gamma} \Big[ \Lambda_t(G')^2 \big(G_t -G'_t \big)^2 \Big] \leq C_T \left( \int_{(\C \times D)^{2}} d_t\big((y,r'),(z,\tilde{r})\big)^2 d\xi\big((y,r'),(z,\tilde{r})\big) \right).
\end{align*}
and
\[
\lambda(r) \E_{\gamma} \Big[ \Lambda_t(G')^2  {G'_t}^2 \Big]^{\frac{1}{2}} \leq C_T.
\]
On the other hand, remark that
\begin{align*}
& \E_{\gamma} \Big[\big( L_t - L'_t\big)^2\Big] \leq 2 \E_{\gamma} \bigg[\bigg(\int_0^t G_s-G'_s d\tilde{W}_s \bigg)^2\bigg]+ 2 \E_{\gamma} \bigg[ \bigg(\int_0^t G_s m_{\mu}(s,r) -G'_s m_{\nu}(s,r) ds \bigg)^2\bigg] \overset{\text{Jens., Fub.}}{\leq} \\
& 2 \E_{\gamma} \bigg[\bigg(\int_0^t G_s-G'_s d\tilde{W}_s \bigg)^2\bigg] +4t \int_0^t \Bigg\{ \E_{\gamma} \Big[ \big(G_s-G'_s\big)^2 m_{\mu}(s,r)^2 \Big] + \E_{\gamma} \Big[ {G'_s}^2 \big(m_{\mu}(s,r) - m_{\nu}(s,r)\big)^2  \Big]\Bigg\}ds\\
& \overset{\eqref{ineq:MajDiff}}{\leq} C_T \Bigg\{ \E_{\gamma} \bigg[\bigg(\int_0^t G_s-G'_s d\tilde{W}_s \bigg)^2\bigg] + \left( \int_{(\C \times D)^{2}} d_t\big((y,r'),(z,\tilde{r})\big)^2 d\xi\big((y,r'),(z,\tilde{r})\big) \right) \Bigg\},
\end{align*}
and also that
\[
\E_{\gamma} \big[L_t^2 \big] \overset{\mathrm{C.S.}}{\leq} 2 \E_{\gamma} \bigg[\bigg(\int_0^t G_s d\tilde{W}_s \bigg)^2\bigg]+ \underbrace{2t \int_0^t \E_{\gamma} \Big[ G_t^2 m_{\mu}(t,r)^2 \Big]}_{\leq C_T}.
\]
Injecting these result in \eqref{ineq:FixPointAverage}, we obtain:
\begin{align*}
& \big\Vert x^{\mu}(r) - x^{\nu}(r) \big\Vert_{\infty,t}^2 \leq C_T \int_0^t \Bigg\{ \big\Vert x^{\mu}(r)-x^{\nu}(r) \big\Vert_{\infty,s}^2 + \E_{\gamma} \bigg[ \sup_{v\leq s} \Big(\int_0^v G_u-G'_u d\tilde{W}_u \Big)^2\bigg] \\
& + \bigg( 1 + \E_{\gamma} \bigg[ \sup_{v\leq s} \Big(\int_0^v G_u d\tilde{W}_u \Big)^2\bigg]\bigg)\left( \int_{(\C \times D)^{2}} d_s\big((y,r'),(z,\tilde{r})\big)^2 d\xi\big((y,r'),(z,\tilde{r})\big) \right) \Bigg\} ds,
\end{align*}
so that by Gronwall's lemma
\begin{align*}
& d_t\Big((x^{\mu}(r),r),(x^{\nu}(r) ,r)\Big)^2 \leq C_T \int_0^t \Bigg\{ \E_{\gamma} \bigg[ \sup_{v\leq s} \Big(\int_0^v G_u-G'_u d\tilde{W}_u \Big)^2\bigg] \\
& + \bigg( 1 + \E_{\gamma} \bigg[ \sup_{v\leq s} \Big(\int_0^v G_u d\tilde{W}_u \Big)^2\bigg]\bigg)\left( \int_{(\C \times D)^{2}} d_s\big((y,r'),(z,\tilde{r})\big)^2 d\xi\big((y,r'),(z,\tilde{r})\big) \right)  \Bigg\} ds .
\end{align*}
Taking the expectation over the Brownian path and initial condition, and using Fubini's theorem and Burkholder-Davis-Gundy's inequality, we obtain
\begin{align}\label{ineq:uniqueness2}
\Exp \bigg[ d_t\Big((x^{\mu}(r),r),(x^{\nu}(r) ,r)\Big)^2 \bigg]& \leq C_T \int_0^t \left( \int_{(\C \times D)^{2}} d_s\big((y,r'),(z,\tilde{r})\big)^2 d\xi\big((y,r'),(z,\tilde{r})\big) \right)  ds.
\end{align}
We now show that we can integrate the term of the left-hand side over $\pi$.
To this purpose, chose distinct locations $r', r \in D$, and let $x^{\mu}_\cdot{(r')}$ be the strong solution of \eqref{SDE:uniqueness} with same $\tilde{W}$ but initial condition given by $\bar{x}^0(r')$ and intrinsic dynamics $f(r',\cdot,x^{\mu}_\cdot{(r')})$. Then
\begin{align*}
& \big| x^{\mu}_t(r) - x^{\mu}_t(r') \big| \leq  \big| \bar{x}^0_0(r) - \bar{x}^0_0(r') \big| + \big( K_f + K_{\lambda} \big|\tilde{W}_t\big|\big) \Vert r-r' \Vert_{\R^d} +  \int_0^t  \Big( K_f \big\Vert x^{\mu}(r) - x^{\mu}(r')\big\Vert_{\infty,s}  \\
& + C_T \big|\lambda(r) m_{\mu}(s,r)-\lambda(r')m_{\mu}(s,r')\big| \Big)ds \\
& +  \int_0^t \bigg|  \E_{\gamma} \bigg[ \lambda(r)\Lambda_t\big( G^{\mu}(r) \big) G^{\mu}_t(r) \tilde{L}^{\mu}_t(r) - \lambda(r') \Lambda_t\big( G^{\mu}(r') \big) G^{\mu}_t(r') \tilde{L}^{\mu}_t(r') \bigg] \bigg|ds,
\end{align*}
where the Gaussian process $(G^{\mu}(r),G^{\mu}(r'))$ has a covariance structure given by:
\begin{equation*}
\int_{\C \times D} 
\left(
\begin{array}{ccc}
\frac{\sigma(r, \tilde{r})^2}{\lambda(r)^2}  S(y_{s-\tau(r, \tilde{r})})S(y_{t-\tau(r, \tilde{r})})  & \frac{\sigma(r, \tilde{r})\sigma(r', \tilde{r})}{\lambda(r)\lambda(r')} S(y_{s-\tau(r, \tilde{r})})S(y_{t-\tau(r', \tilde{r})}) \\
\frac{\sigma(r, \tilde{r})\sigma(r', \tilde{r})}{\lambda(r)\lambda(r')} S(y_{s-\tau(r', \tilde{r})})S(y_{t-\tau(r, \tilde{r})}) & \frac{\sigma(r', \tilde{r})^2}{\lambda(r')^2} S(y_{s-\tau(r', \tilde{r})})S(y_{t-\tau(r', \tilde{r})}) \\
\end{array}
\right) 
d\mu \big(y,\tilde{r}\big).
\end{equation*}
First, observe that
\begin{align*}
& \big|\lambda(r) m_{\mu}(s,r) -\lambda(r')m_{\mu}(s,r') \big|= \Big|\int_{\C \times D} \Big(J(r,\tilde{r})S(y_{s-\tau(r,\tilde{r})})-J(r',\tilde{r})S(y_{s-\tau(r',\tilde{r})}) \Big) d\mu(y,\tilde{r})\Big|\\
& \leq K_J \Vert r-r'\Vert_{\R^d} + \Jmax K_S  \int_{\C \times D} \big|y_{s-\tau(r,\tilde{r})}-y_{s-\tau(r',\tilde{r})} \big| d\mu(y,\tilde{r}) \leq C \int_{\C \times D} d_s\big((y,r),(y,r')\big) d\mu(y,\tilde{r}).
\end{align*}
Moreover, as in Proposition~\ref{prop:MeanAndVarRegularity} we can show that:
\begin{multline*}
\E_{\gamma} \bigg[ \Big(\lambda(r) \Lambda_t\big( G^{\mu}(r) \big) -\lambda(r') \Lambda_t\big( G^{\mu}(r') \big) \Big)^2 G^{\mu}_t(r)^2 \bigg]+  \E_{\gamma} \Big[ \lambda(r')^2 \Lambda_t\big( G^{\mu}(r') \big)^2 \big( G^{\mu}_t(r) -  G^{\mu}_t(r') \big)^2 \Big]\\
\leq C_T \int_{\C \times D} d_t\big((y,r),(y,r')\big)^2 d\mu(y,\tilde{r}).
\end{multline*}
As a consequence, using the same decomposition as in inequality \eqref{ineq:uniquenesstheorem1}, we obtain
\begin{align*}
\big\Vert x^{\mu}(r) - x^{\mu}(r') \big\Vert_{\infty,t}^2 & \leq C_T \bigg\{ \Vert \bar{x}^0(r) - \bar{x}^0(r') \Vert_{\tau,\infty}^2+ \big( 1 + (\tilde{W}^*_t)^2\big) \Vert r-r' \Vert_{\R^d}^2+ \int_0^t \Vert x^{\mu}(r)-x^{\mu}(r') \Vert_{\infty,s}^2 ds \\
&+  \int_0^t \bigg( \int_{\C \times D} d_s\big((y,r),(y,r')\big)^2 d\mu(y,\tilde{r})\bigg) \bigg( 1 + \E_{\gamma} \bigg[ \sup_{v\leq s} \Big(\int_0^v G^{\mu}_u(r) d\tilde{W}_u \Big)^2\bigg]\bigg) ds \\
& + \int_0^t  \E_{\gamma} \bigg[ \sup_{v\leq s} \Big(\int_0^v G^{\mu}_u(r)-G^{\mu}_u(r') d\tilde{W}_u \Big)^2\bigg] ds \bigg\},
\end{align*}
where $\tilde{W}^*_t= \sup_{0 \leq s\leq t} |\tilde{W}_s|$. Using Gronwall's lemma, taking the expectation and relying again on Fubini's theorem and Burkholder-Davis-Gundy's inequality, we obtain:
\begin{align*}
\Exp \Big[ \Vert x^{\mu}(r) - x^{\mu}(r') \Vert_{\infty,t}^2 \Big] & \leq  C_T \bigg\{ \Exp \Big[ \Vert \bar{x}^0(r) - \bar{x}^0(r') \Vert_{\tau, \infty}^2\Big] + \big( 1 + \Exp \big[ (\tilde{W}^*_t)^2 \big]\big) \Vert r-r' \Vert_{\R^d}^2 \\
&  +  \int_0^t \int_{\C \times D} d_s\big((y,r),(y,r')\big)^2 d\mu(y,\tilde{r})  ds  \bigg\}.
\end{align*}
Hence $\Exp \Big[ \big\Vert x^{\mu}(r) - x^{\mu}(r') \big\Vert_{\infty,t}^2 \Big] \to 0$ as $\Vert r'- r \Vert_{\R^d} \searrow 0$, by using \eqref{hyp:spaceRegInitCond}, and the Monotone Convergence Theorem.
Now, observe that,
\begin{align*}
\underbrace{\Exp \Big[ d_t\Big((x^{\mu}(r),r),(x^{\nu}(r) ,r)\Big)^2 \Big]}_{=:\phi^{\mu,\nu}_t(r)}& = \Exp \Big[ \big\Vert x^{\mu}(r)-x^{\nu}(r) \big\Vert_{\infty,t}^2 \Big].
\end{align*}
As a consequence
\begin{align*}
& \big| \phi^{\mu,\nu}_t(r) - \phi^{\mu,\nu}_t(r') \big| = \Big| \Exp \Big[ \big\Vert x^{\mu}(r)-x^{\nu}(r) \big\Vert_{\infty,t}^2-\big\Vert x^{\mu}(r')-x^{\nu}(r') \big\Vert_{\infty,t}^2 \Big] \Big|,\\
 & \overset{\mathrm{C.S.}}{\leq} \prod_{\varepsilon= \pm 1} \Exp \bigg[ \Big( \big\Vert x^{\mu}(r)-x^{\nu}(r) \big\Vert_{\infty,t}+ \varepsilon \big\Vert x^{\mu}(r')-x^{\nu}(r') \big\Vert_{\infty,t} \Big)^2 \bigg]^{\frac{1}{2}}\\
 & \leq  \sqrt{2 \big(\phi^{\mu,\nu}_t(r')+ \phi^{\mu,\nu}_t(r)\big)} \Exp \bigg[ \Big( \big\Vert x^{\mu}(r)-x^{\mu}(r') \big\Vert_{\infty,t}+\big\Vert x^{\nu}(r)-x^{\nu}(r') \big\Vert_{\infty,t} \Big)^2\bigg]^{\frac{1}{2}}\\
 & \overset{\eqref{ineq:uniqueness2}}{\leq} C_T d_T^V(\mu,\nu) \sqrt{ \Exp \Big[ \big\Vert x^{\mu}(r)-x^{\mu}(r') \big\Vert_{\infty,t}^2\Big]+\Exp \Big[ \big\Vert x^{\nu}(r)-x^{\nu}(r') \big\Vert_{\infty,t}^2\Big]}.
\end{align*}
In particular, $r\to \phi^{\mu,\nu}_t(r)$ is continuous as soon as $d_T^V(\mu,\nu)< +\infty$ (which is for instance the case as soon as the two measures have a bounded second moment), and we can integrate inequality \eqref{ineq:uniqueness2} over space to obtain for any $\xi$ with marginals $\mu$ and $\nu$:
\begin{align}\label{ineq:uniqueness3}
\Exp \bigg[ \int_D  d_t\Big((x^{\mu}(r),r),(x^{\nu}(r) ,r)\Big)^2d\pi(r)  \bigg]  \leq C_T \int_0^t \left( \int_{(\C \times D)^{2}} d_s\big((y,r'),(z,\tilde{r})\big)^2 d\xi\big((y,r'),(z,\tilde{r})\big) \right)  ds.
\end{align}
The proof now relies on Picard's iteration. Define $Q_{\mu,\nu}$, the law of $\big((x^{\mu}(r),r),(x^{\nu}(r) ,r) \big)$ where $\mathcal{L}(r)=\pi$. It has marginal $Q_{\mu}$ and $Q_{\nu}$ so that for any $\xi \in \M_1^+((\C \times D)^2 \big)$ with marginals $\mu$ and $\nu$:
\begin{align*}
\Exp \bigg[ \int_D  d_t\Big((x^{Q_{\mu}}(r),r),(x^{Q_{\nu}}(r) ,r)\Big)^2d\pi(r)  \bigg]  & \overset{\eqref{ineq:uniqueness3}}{\leq} C_T \int_0^t \left( \int_{(\C \times D)^{2}} d_s\big((y,r'),(z,\tilde{r})\big)^2 dQ_{\mu,\nu}\big((y,r'),(z,\tilde{r})\big) \right)  ds\\
& \overset{\eqref{ineq:uniqueness3}}{\leq} C_T^2 \int_0^t \int_0^s \left( \int_{(\C \times D)^{2}} d_u\big((y,r'),(z,\tilde{r})\big)^2 d\xi\big((y,r'),(z,\tilde{r})\big) \right)  du  ds.
\end{align*}
Letting for any $\mu \in \M_1^+(\C \times D)$, $Q^0_{\mu}:=\mu$, and by recurrence $Q^n_{\mu}:=Q_{Q^{n-1}_{\mu}}$ for any $n \geq 1$, we can iterate this inequality to obtain for any $\xi \in \M_1^+((\C \times D)^2 \big)$ with marginals $\mu$ and $\nu$:
\begin{align*}
{d_t^V\big(Q^n_{\mu},Q^n_{\nu}\big)}^2 & \leq \Exp \bigg[ \int_D  d_t\Big((x^{Q^n_{\mu}}(r),r),(x^{Q^n_{\nu}}(r) ,r)\Big)^2d\pi(r)  \bigg] \\
& \overset{\eqref{ineq:uniqueness3}}{\leq} C_T^{n+1} \int_0^t \int_0^{t_1} \ldots \int_0^{t_n} \left( \int_{(\C \times D)^{2}} d_{t_{n+1}}\big((y,r'),(z,\tilde{r})\big)^2 d\xi\big((y,r'),(z,\tilde{r})\big) \right)  dt_{n+1} \ldots dt_1 ds\\
& \leq \frac{C_T^{n+1}t^{n+1}}{(n+1)!} \left( \int_{(\C \times D)^{2}} d_t\big((y,r'),(z,\tilde{r})\big)^2 d\xi\big((y,r'),(z,\tilde{r})\big) \right).
\end{align*}
Taking the infimum on $\xi$ now yields
\begin{align*}
{d_t^V\big(Q^n_{\mu},Q^n_{\nu}\big)}^2 \leq \frac{ C_T^{n+1} t^{n+1} }{ (n+1)! } d_t^V(\mu,\nu)^2.
\end{align*}
Hence, for $n$ large enough $\mu \to Q^n_{\mu}$ is a contraction so that we conclude the proof on classical arguments.
\end{proof}

\begin{lemma}\label{lem:ExistenceAndUnicityTilde}
For any $r\in D$ and $\mu\in \M_{1}^{+}(\C\times D)$, there exists a unique strong solution to the SDE:
\begin{equation*}
\left\{
  \begin{array}{ll}
  dx_t^{\mu}(r) = f(r,t,x_t^{\mu}(r)) dt + \lambda(r) O^{\tilde{W}}_{\mu}(t,r) dt + \lambda(r) d\tilde{W}_t \\
  \big(x_s^{\mu}(r)\big)_{-\bar{\tau} \leq s \leq 0} = \bar{x}^0(r).
  \end{array}
\right.
\end{equation*}
where $\tilde{W}$ is a $\mathbbm{P}$-Brownian motion, $\bar{x}^0(r) \in \C_{\tau}$ is the continuous realization of the initial law $\mu_0(r)$ of \eqref{hyp:spaceRegInitCond}, and
\[
O^{\tilde{W}}_{\mu}(t,r):= \E_{\gamma} \bigg[ \Lambda_t\big( G^{\mu}(r) \big)G^{\mu}_t (r)\bigg(\int_0^t G^{\mu}_s(r)\big(d\tilde{W}_s- m_{\mu}(s,r)ds\big)\bigg)\bigg]   + m_{\mu}(t,r).
\]
\end{lemma}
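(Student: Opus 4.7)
The key structural observation is that the extra drift $O^{\tilde{W}}_{\mu}(t,r)$ depends only on the driving Brownian motion $\tilde{W}$, the fixed parameters $\mu$ and $r$, and the Gaussian process $G^{\mu}(r)$ defined on the independent probability space $(\hat{\Omega},\hat{\mathcal{F}},\gamma)$ — but \emph{not} on the unknown solution $x_t^{\mu}(r)$. Consequently the SDE is a Lipschitz-in-state SDE perturbed by an exogenous $\tilde{W}$-adapted drift, and the only real work is to verify that $O^{\tilde{W}}_{\mu}(\cdot,r)$ is a well-defined, progressively measurable, square-integrable process; once that is done, classical existence/uniqueness theory concludes.

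First, I would give $L^{\mu}_t(r):=\int_0^t G^{\mu}_s(r)\bigl(d\tilde{W}_s-m_{\mu}(s,r)ds\bigr)$ a rigorous meaning on the product space $(\hat{\Omega}\times\Omega,\hat{\mathcal{F}}\otimes\mathcal{F},\gamma\otimes\mathbb{P})$. Since $G^{\mu}(r)$ is $\gamma$-a.s. continuous, independent of $\tilde{W}$, and has variance bounded by $\smax^2/\ls^2$ uniformly in $s$, Fubini together with It\^o's isometry gives
\[
\Exp_{\gamma\otimes\mathbb{P}}\Bigl[\Bigl(\int_0^t G^{\mu}_s(r)\,d\tilde{W}_s\Bigr)^2\Bigr]=\int_0^t K_{\mu}(s,s,r)\,ds\le \tfrac{\smax^2 T}{\ls^2},
\]
and the $m_{\mu}$-term contributes a bounded Riemann integral by boundedness of $m_{\mu}$. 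Thus $L^{\mu}_t(r)$ is $(\gamma\otimes\mathbb{P})$-square integrable. Combining with the uniform bound $\Lambda_t(G^{\mu}(r))\le C_T$ from Proposition~\ref{LambdaProperties} and Cauchy--Schwarz in $\gamma$, I obtain that the integrand $\Lambda_t(G^{\mu}(r))G^{\mu}_t(r)L^{\mu}_t(r)$ is $\gamma$-integrable for $\mathbb{P}$-a.e.\ $\tilde{W}$, so taking $\mathcal{E}_{\gamma}$ and adding $m_{\mu}(t,r)$ defines $O^{\tilde{W}}_{\mu}(t,r)$ as a finite random variable. Its $(\mathcal{F}_t)_{t\ge 0}$-adaptedness follows from the fact that $L^{\mu}_t(r)$ involves $\tilde{W}$ only up to time $t$; continuity in $t$ follows by dominated convergence (using the uniform bounds above and the $\gamma$-a.s. continuity of $G^{\mu}(r)$ and sample continuity of $\tilde{W}$); and a further use of It\^o's isometry combined with Proposition~\ref{LambdaProperties} yields $\sup_{t\le T}\Exp[O^{\tilde{W}}_{\mu}(t,r)^2]\le C_T$.

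With $t\mapsto O^{\tilde{W}}_{\mu}(t,r)$ established as a continuous, $(\mathcal{F}_t)$-adapted, $\mathbb{P}$-square-integrable process \emph{not depending on} $x$, the equation reduces to the Lipschitz SDE
\[
dx_t^{\mu}(r)=\bigl(f(r,t,x_t^{\mu}(r))+\lambda(r)O^{\tilde{W}}_{\mu}(t,r)\bigr)dt+\lambda(r)\,d\tilde{W}_t,
\]
with deterministic $\C_{\tau}$-valued initial datum $\bar{x}^0(r)$. Existence and strong uniqueness now follow by a standard Picard iteration: the map $\Phi$ sending a continuous adapted $x$ to
\[
\Phi(x)_t=\bar{x}^0_0(r)+\int_0^t\!\!f(r,s,x_s)\,ds+\int_0^t\!\!\lambda(r)O^{\tilde{W}}_{\mu}(s,r)\,ds+\lambda(r)\tilde{W}_t
\]
is well-defined on $L^2(\Omega;\C([-\bar{\tau},T],\R))$ thanks to the square-integrability of $O^{\tilde{W}}_{\mu}$; the $K_f$-Lipschitz property of $f$ in its third variable (Assumption (A1)) gives $\Exp[\|\Phi(x)-\Phi(y)\|_{\infty,t}^2]\le K_f^2 T\int_0^t\Exp[\|x-y\|_{\infty,s}^2]\,ds$, and iterating yields a contraction on a sufficiently small time interval, which extends to $[0,T]$ by patching. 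Alternatively, setting $Y_t:=x_t-\lambda(r)\tilde{W}_t-\int_0^t\lambda(r)O^{\tilde{W}}_{\mu}(s,r)ds$ reduces the SDE to the random ODE $\dot Y_t=f(r,t,Y_t+\lambda(r)\tilde{W}_t+\int_0^t\lambda(r)O^{\tilde{W}}_{\mu}(s,r)ds)$ for which the Cauchy--Lipschitz theorem applies pathwise.

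The one step that demands care — and which I view as the main obstacle — is the rigorous construction of $O^{\tilde{W}}_{\mu}$: because $G^{\mu}(r)$ lives on an auxiliary probability space independent of $\tilde{W}$, one must carefully justify the stochastic integral $\int_0^t G^{\mu}_s(r)d\tilde{W}_s$ on the product space and interchange $\mathcal{E}_\gamma$ with the It\^o integration using Fubini for stochastic integrals (see e.g.\ the construction in Proposition~\ref{LambdaProperties} and the arguments in the proof of Theorem~\ref{thm:Qnu}); once this is in place, the adaptedness, continuity and integrability controls above are all straightforward consequences of the uniform bounds already collected in Proposition~\ref{LambdaProperties} and Proposition~\ref{prop:MeanAndVarRegularity}.
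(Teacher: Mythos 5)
Your proof is correct and takes essentially the same route as the paper: both reduce the equation to a Lipschitz-in-state SDE with an exogenous drift (since $O^{\tilde{W}}_{\mu}$ does not depend on the unknown $x$) and settle existence and uniqueness by Picard iteration using the $K_f$-Lipschitz bound on $f$. The only difference is that you spell out the well-definedness, adaptedness, and square-integrability of $O^{\tilde{W}}_{\mu}$, a step the paper leaves implicit; this is a reasonable supplement rather than a different argument.
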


\begin{proof}
The proof relies on Picard's iterations. Let $x^0 \in \C_{\tau}$ with $x^0=\bar{x}^0(r)$, and define recursively the sequence $\big(x^n_t, 0\leq t \leq T \big)_{n \in \N^*}$ by $(x^n_s)_{ -\bar{\tau} \leq s \leq 0} = \bar{x}^0(r)$, and
\[
x^{n+1}_t = \bar{x}^0_0(r) + \int_0^t f(r,s,x_s^{n}) ds + \int_0^t \lambda(r) O^{\tilde{W}}_{\mu}(s,r) ds + \lambda(r) \tilde{W}_t, \; \; \forall t \in [0,T].
\]
We have for any $n \geq 1, t \in [0,T]$
\[
x^{n+1}_t -x^n_t = \int_0^t \big(f(r,s,x_s^{n})-f(r,s,x_s^{n-1})\big) ds,
\]
so that we easily obtain by Lipschitz-continuity of $f$ that
\[
\Exp \Big[ \sup_{s \leq t} \big| x^{n+1}_s- x^n_s \big|^2 \Big] \leq C_T \int_0^t \Exp \Big[ \sup_{u \leq s} \big| x^{n}_u- x^{n-1}_u \big|^2 \Big] ds.
\]
The conclusion relies on classical arguments.
\end{proof}

\subsection{Convergence of the process}

We are now in a position to prove theorem~\ref{thm:Convergence}.

\begin{proof}
	Indeed, for $\delta$ a strictly positive real number and $B(Q,\delta)$ the open ball of radius $\delta$ centered in $Q$ for the Vasserstein distance. We prove that $Q^{N}(\hat{\mu}_N\notin B(Q,\delta))$ tends to zero as $N$ goes to infinity. Indeed, for $K_{\varepsilon}$ a compact defined in theorem~\ref{thm:tightness}, we have for any $\varepsilon>0$:
		\begin{equation*}
	Q^N\big(\hat{\mu}_N \notin B(Q,\delta)\big) \leq \varepsilon + Q^N\big(\hat{\mu}_N \in B(Q,\delta)^c \cap K_{\varepsilon}\big).
	\end{equation*}
	The set $B(Q,\delta)^c \cap K_{\varepsilon}$ is a compact, and theorem~\ref{lemma3} now ensures that
	\[\limsup_{N\to\infty} \frac 1 N \log Q^{N}(\hat{\mu}_N\in B(Q,\delta)^c \cap K_{\varepsilon} )\leq -\inf_{B(Q,\delta)^c \cap K_{\varepsilon}} H\]
	and eventually, theorem~\ref{thm:Limit} ensures that the right-hand side of the inequality is strictly negative, which implies that
	\[\lim_{N\to\infty}Q^{N}(\hat{\mu}_N \notin B(Q,\delta)) \leq \varepsilon,\]
	that is:
	\[\lim_{N \to\infty}Q^{N}(\hat{\mu}_N \notin B(Q,\delta)) =0.\]
\end{proof}

\appendix

\section{Proof of lemma~\ref{lemma:ContinuityOfSolutions}: regularity of the solutions for the limit equation}\label{sec:Appendix.A}
In this appendix we demonstrate the regularity in space of the solutions that is expressed in lemma~\ref{lemma:ContinuityOfSolutions}. We start by showing a technical lemma on the uncoupled system before proceeding to the proof of that result. 
\begin{lemma}
\begin{enumerate}
\item The map:
\[
\mathcal{P}:
\left\{
  \begin{array}{ll}
  D \to \M_1^+(\C)  \\
  r \to P_r\\
  \end{array}
  \right.
\]
is continuous with respect to the borel topology on $D$, and the weak topology on $\M_1^+(\C)$, e.g. $r_n \to r \quad \implies P_{r_n} \overset{\mathcal{L}}{\to} P_r$.
\item Let $\mathcal{W}$ be the Wiener measure on $\C$. Then, $\forall A \in \mathcal{B}(\C), \mathcal{W}(A)=0 \implies P_r(A)=0$. 
\item $P$ is a well defined probability measure on $\C \times D$.
\end{enumerate}
\end{lemma}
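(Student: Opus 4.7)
I address the three items in turn.

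For (1), I would use pathwise coupling. Fix $r, r' \in D$ and realize the two SDEs~\eqref{eq:Uncoupled} on a common probability space, driven by the same Brownian motion $(W_t)_{t\geq 0}$ and with initial segments $\bar{x}^0(r)$ and $\bar{x}^0(r')$ coming from the random field provided by~\eqref{hyp:spaceRegInitCond}. Setting $\Delta_t := X^r_t - X^{r'}_t$, the Lipschitz assumptions~(1) and~(5) combined with Doob's $L^2$-inequality and Gronwall's lemma give
\[
\Exp\bigl[\sup_{-\bar{\tau}\leq s \leq T} |\Delta_s|^2\bigr] \leq C_T \|r - r'\|_{\R^d}^2,
\]
with $C_T$ independent of $r, r'$, the past-segment contribution being absorbed via~\eqref{hyp:spaceRegInitCond}. $L^2$-convergence of the paths implies convergence in law, so $r_n \to r$ in $D$ yields $P_{r_n} \to P_r$ weakly.

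For (2), I would apply Girsanov's theorem to~\eqref{eq:Uncoupled}. Take as reference measure $\mathcal{W}$ the law on $\mathcal{C}$ obtained by keeping the past segment distributed as $\mu_0(r)$ and prolonging it on $[0,T]$ by a pure Brownian increment of variance $\lambda(r)^2$ starting from $\bar{x}^0_0(r)$. Then
\[
\der{P_r}{\mathcal{W}}(x) = \exp\biggl\{ \int_0^T \frac{f(r,s,x_s)}{\lambda(r)^2} d(x_s - x_0) - \frac{1}{2} \int_0^T \frac{f(r,s,x_s)^2}{\lambda(r)^2} ds \biggr\},
\]
as soon as Novikov's condition holds. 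The Lipschitz bound on $f$ gives the linear growth $|f(r,t,x)| \leq K_f(1+|x|+|r|+|t|)$, which, together with the uniform lower bound $\lambda(r) \geq \lambda_*$ and a standard localization by exit times of $|X|$ out of balls of growing radius, is enough to conclude $P_r \ll \mathcal{W}$; in particular $\mathcal{W}(A) = 0 \Rightarrow P_r(A) = 0$.

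For (3), the essential point is Borel measurability of $r \mapsto P_r(A)$ for every $A \in \mathcal{B}(\mathcal{C})$. Item~(1), combined with the Portmanteau theorem, implies that $r \mapsto P_r(U)$ is lower semicontinuous, hence Borel measurable, for every open $U \subset \mathcal{C}$. A standard $\pi$-$\lambda$ argument extends measurability to all of $\mathcal{B}(\mathcal{C})$, after which $P$ can be defined on Borel rectangles by $P(A \times B) := \int_B P_r(A)\, d\pi(r)$ and extended to $\mathcal{B}(\mathcal{C}\times D)$ by Carath\'eodory. Unit mass and countable additivity follow directly from the corresponding properties of each $P_r$ together with monotone convergence. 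The main obstacle I expect is in~(2): verifying Novikov's condition under only Lipschitz (hence possibly linearly growing) drift $f$ requires either localization via exit times or a direct one-sided exponential moment estimate under $\mathcal{W}$, which is classical but technically delicate; items~(1) and~(3) are essentially routine once one keeps careful track of the delay window $[-\bar{\tau},0]$ through the coupled initial field of~\eqref{hyp:spaceRegInitCond}.
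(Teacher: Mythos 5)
Your coupling argument for item (1) is essentially the same as the paper's. The paper realizes $X^{r_n}$ and $X^r$ on the same Brownian motion $W$, with the random initial fields given by the hypothesis~\eqref{hyp:spaceRegInitCond}, and obtains by Gronwall
\[
\Vert X^n-X \Vert_{\infty,T} \leq  \Big( \Vert\bar{x}^0(r_n)-\bar{x}^0(r) \Vert_{\tau,\infty} + \Vert r-r_n \Vert_{\R^d}  K_f T+ K_{\lambda}\Vert r-r_n \Vert_{\R^d}   W^*_T \Big) e^{ K_f T },
\]
after which squaring and taking expectations gives exactly your bound (one small remark: Doob's inequality is not needed, because the diffusion coefficient is the deterministic constant $\lambda(r)$, so the noise mismatch enters as $(\lambda(r)-\lambda(r'))W_t$ and is controlled by $W^*_T$ directly).

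For item (2) your method is the same as the paper's — Girsanov with the drift $f$, and localization to handle the non-integrability coming from linear growth of $f$ (the paper invokes Exercise 2.10 of Revuz--Yor, which is the non-explosion criterion; your exit-time localization is the same idea). There is, however, a slip in your exposition that you should flag: the reference measure you actually construct (initial segment distributed as $\mu_0(r)$, then increments of variance $\lambda(r)^2$) is the law $W_r$ from the paper, \emph{not} the Wiener measure $\mathcal{W}$ that appears in the statement. Girsanov gives $P_r \ll W_r$. To close the statement as written one must still relate $W_r$-null sets to $\mathcal{W}$-null sets; the paper asserts that this follows from $\lambda(r) \geq \ls$ but does not prove it, and in fact mutual absolute continuity fails when $\lambda(r)\neq 1$ because Brownian motions with distinct variances are mutually singular. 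The statement that is actually proved (and actually used later in the paper) is $P_r \ll W_r$; since $W_r$ is a non-degenerate Gaussian law on $\C$, it does not charge spheres, and that is all item (3) of the paper's argument needs.

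For item (3) you take a genuinely different route from the paper. The paper derives continuity of $r\mapsto P_r(B(y,\varepsilon))$ for every ball by using item (2) to show that boundaries of balls are $P_r$-null, and then invokes Portmanteau; finally it extends to $\mathcal{B}(\C\times D)$ via the $\pi$-system of balls-times-Borel sets. You instead extract only lower semicontinuity of $r\mapsto P_r(U)$ for open $U$ (a direct consequence of (1) and Portmanteau), which already gives Borel measurability, and then run the $\pi$-$\lambda$ argument. Your route is both shorter and more robust: it does not pass through item (2) at all, and in particular it sidesteps the delicate claim about the equivalence of $W_r$ and $\mathcal{W}$ discussed above. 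The only price is that you get measurability rather than continuity of $r\mapsto P_r(A)$ on balls; but the paper never uses continuity beyond measurability, so nothing is lost.
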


\begin{proof}
The first point is the consequence of a coupling argument. Let $W$ be a $P$-Brownian motion, $\bar{x}^0:D \to \C_{\tau}$ be as in \eqref{hyp:spaceRegInitCond}, and $(r_n)_{n \in \N} \in D^{\N}$ a sequence of positions that converges toward $r \in D$. We consider $X^n$ and $X$, the respective strong solutions of the SDEs:
\[
  \begin{cases}
      dX^n_t= f(r_n,t,X^n_t) dt + \lambda(r_n) dW_t\\
      (X^n_t)_{t \in [-\bar{\tau},0] } = \bar{x}^0(r_n)
  \end{cases} \qquad \qquad
  \begin{cases}
      dX_t= f(r,t,X_t) dt + \lambda(r) dW_t\\
      (X_t)_{t \in [-\bar{\tau},0] } = \bar{x}^0(r)
  \end{cases}
\]

driven by the same Brownian motion $W$.\\
Then, by Gronwall lemma, letting $W^*_T= \underset{t\in [-\bar{\tau},T]}{\sup} |W_t|$,
\[
  \Vert X^n-X \Vert_{\infty,T} \leq  \Big( \Vert\bar{x}^0(r_n)-\bar{x}^0(r) \Vert_{\tau,\infty} + \Vert r-r_n \Vert_{\R^d}  K_f T+ K_{\lambda}\Vert r-r_n \Vert_{\R^d}   W^*_T \Big) e^{\{ K_f T \}} ,
\]
Hence, by \eqref{hyp:spaceRegInitCond}:
\begin{align*}
\Exp \Big[ \Vert X^n-X \Vert_{\infty,T}^2 \Big] \to 0, \text{ as } r_n \to r,
\end{align*}
so that $P_{r_n}=\mathcal{L}(X_n) \implies \mathcal{L}(X)=P_r$ as $r_n \to r$.\\
In order to prove the second point, let $W_r$ be the unique strong solution of
\[
  \begin{cases}
      dX_t= \lambda(r) dW_t\\
      (X_t)_{t \in [-\bar{\tau},0] } = \bar{x}^0(r).
  \end{cases}
\]
Following Exercise (2.10) of \cite{revuz-yor:99}, we remark, by Lipschitz continuity of $f$, that explosion of $P_r$ almost surely never occurs in finite time, so that Girsanov's theorem applies:
\[
P_r \ll W_r, \quad \quad \der{P_r}{W_r}=\exp{ \Big\{ \int_0^T \frac{f(r,t,X_t)}{\lambda(r)} dX_t - \frac 1 2 \int_0^T \Big(\frac{f(r,t,X_t)}{\lambda(r)}\Big)^2 dt \Big\}}.
\]
Consequently, $ \forall A \in \mathcal{B}(\C)$,
\[
P_r(A)=\Exp_{W_r}(\der{P_r}{W_r} \mathbf{1_A})
\]
so that $P_r(A)=0$ as soon as $W_r(A)=0$. As $\lambda(r) > \ls$, $W_r(A)=0 \iff \mathcal{W}(A)=0$.\\
The third point is now easy to settle. In fact, for any $y \in \C$ and $\varepsilon > 0$, $\mathcal{W}(\partial \mathcal{B}(y,\varepsilon))=\mathcal{W}(\{x \in \C, \lVert x-y\rVert_{\infty,T}=\varepsilon\})=0$. Hence, Portmanteau implies that $r \to P_r(\mathcal{B}(y,\varepsilon))$ is a continuous map, so that we can define $\int_{D} P_r(\mathcal{B}(y,\varepsilon)) d\pi(r)$ univocally. As $\{ \mathcal{B}(y,\varepsilon)\times B, y \in \C, \varepsilon > 0, B\in \mathcal{B}(D)\}$ form a $\Pi$-system that generates $\mathcal{B}(\C \times D)$, $P$ is a well defined probability measure on $\C \times D$.
\end{proof}

We now proceed to prove lemma \ref{lemma:ContinuityOfSolutions} that we repeat below:
\begin{lemma}
The map
\begin{equation*}
\mathcal{Q}:
\left\{
\begin{array}{ll}
  D^N \to \mathcal{M}_1^+(\C^N)\\
  \mathbf{r} \to Q^N_{\mathbf{r}}

\end{array}
\right.
\end{equation*}
where $Q^N_{\mathbf{r}}:=\mathcal{E}_J \big( Q^N_{\mathbf{r}}(J) \big)$, is continuous with respect to the weak topology. Moreover,
\[
dQ^N(\mathbf{x},\mathbf{r}):=dQ^N_{\mathbf{r}}(\mathbf{x})d\pi^{\otimes N}(\mathbf{r})
\]
defines a probability measure on $\mathcal{M}_1^+\big((\C \times D)^N \big)$.
\end{lemma}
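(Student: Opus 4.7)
\textbf{Plan for the proof of Lemma~\ref{lemma:ContinuityOfSolutions}.} The strategy is a synchronous coupling argument at the level of the SDE, analogous to the one used in the preliminary lemma of Appendix~\ref{sec:Appendix.A}, but carried out jointly over the $N$ particles and the Gaussian environment. Using the explicit decomposition
\[
J_{ij}=\frac{J(r_i,r_j)}{N}+\frac{\sigma(r_i,r_j)}{\sqrt{N}}\,\xi_{ij},
\]
with $(\xi_{ij})$ an i.i.d.\ family of $\mathcal{N}(0,1)$ variables independent of $\mathbf{r}$, the whole randomness producing $Q^N_{\mathbf{r}}=\mathcal{E}_J(Q^N_{\mathbf{r}}(J))$ is encoded by the triple $\bigl((W^i)_{i},(\xi_{ij})_{i,j},\bar{x}^0(\cdot)\bigr)$ \emph{independent of the positions}. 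Given a sequence $\mathbf{r}^n\to\mathbf{r}$ in $D^N$, I will build on the same probability space the strong solutions $X^{i,N}(\mathbf{r}^n)$ and $X^{i,N}(\mathbf{r})$ of~\eqref{eq:Network} driven by the same $W^i$, the same $\xi_{ij}$ and initial data $\bar{x}^0(r_i^n)$ and $\bar{x}^0(r_i)$ respectively, and compare them pathwise.

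The core estimate is then a Gronwall inequality for $\phi^{n}(t):=\sum_{i=1}^N\Exp\bigl[\lVert X^{i,N}(\mathbf{r}^n)-X^{i,N}(\mathbf{r})\rVert_{\infty,t}^2\bigr]$, where the expectation is taken jointly over the noises and the $\xi_{ij}$. Lipschitz continuity of $f,\lambda,J,\sigma$ in $r$ and of $f,b$ in the state variables, boundedness of $b,J,\sigma$, Lipschitz continuity of $\tau$ combined with the spatial continuity of $\bar{x}^0$ in~\eqref{hyp:spaceRegInitCond}, and the $\mathcal{C}$-modulus-of-continuity argument already used to control $d_T$ yield an inequality of the form
\[
\phi^n(t)\leq C_T\,N\Bigl(\Vert \mathbf{r}^n-\mathbf{r}\Vert^2+\Exp[\Vert\bar{x}^0(\mathbf{r}^n)-\bar{x}^0(\mathbf{r})\Vert_{\tau,\infty}^2]\Bigr)+C_T\int_0^t\phi^n(s)\,ds,
\]
where the constant $C_T$ comes from the $L^2$-boundedness of the Gaussian weights (the factors $1/N$ and $1/\sqrt{N}$ in $J_{ij}$ precisely compensate the $N$ terms in the interaction sum, so $\Exp_J[(\sum_j J_{ij}b)^2]$ stays bounded uniformly in $N$). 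Gronwall's lemma together with~\eqref{hyp:spaceRegInitCond} then forces $\phi^n(T)\to 0$. For any bounded continuous test function $\Phi$ on $\C^N$, dominated convergence thus gives $\int\Phi\,dQ^N_{\mathbf{r}^n}=\Exp\bigl[\Phi(X^{\cdot,N}(\mathbf{r}^n))\bigr]\to\Exp\bigl[\Phi(X^{\cdot,N}(\mathbf{r}))\bigr]=\int\Phi\,dQ^N_{\mathbf{r}}$, proving weak continuity of $\mathcal{Q}$.

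Once continuity of $\mathcal{Q}:D^N\to\M_1^+(\C^N)$ is established, the fact that $dQ^N(\mathbf{x},\mathbf{r}):=dQ^N_{\mathbf{r}}(\mathbf{x})\,d\pi^{\otimes N}(\mathbf{r})$ defines a Borel probability measure on the Polish space $(\C\times D)^N$ follows by the standard kernel construction: for every bounded continuous $\Phi$ on $\C^N$, $\mathbf{r}\mapsto\int\Phi\,dQ^N_{\mathbf{r}}$ is continuous, hence Borel; by a monotone class argument using that $\C^N$ is Polish, $\mathbf{r}\mapsto Q^N_{\mathbf{r}}(A)$ is Borel measurable for all $A\in\mathcal{B}(\C^N)$, so $Q^N_{\mathbf{r}}$ is a Markov kernel from $D^N$ to $\C^N$, and its integration against $\pi^{\otimes N}$ yields a well-defined probability measure on $(\C\times D)^N$ with total mass one.

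The main obstacle in this program is the careful bookkeeping inside the Gronwall estimate: the interaction sum mixes the state variables of all $N$ particles with both a location-dependent delay $\tau(r_i,r_j)$ and location-dependent Gaussian amplitudes, so three sources of perturbation in $\mathbf{r}$ — the amplitudes $J,\sigma$, the delays inside $b$, and the initial data — must be controlled simultaneously. The delay contribution is handled exactly as in the spatial distance $d_T$ introduced in~\eqref{def:SkorokhodLikeNorm}, namely by extending the initial modulus of continuity of $\bar{x}^0$ on $[-\bar{\tau},0]$ to a uniform modulus of continuity of the solutions on $[-\bar{\tau},T]$ via Gronwall; the Gaussian amplitudes are handled by taking $L^2$ expectations in $\xi_{ij}$ before iterating Gronwall, which keeps all constants uniform in $N$.
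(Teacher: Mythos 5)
Your overall strategy --- synchronous coupling of the two SDEs driven by the same Brownian motions and the same underlying i.i.d. Gaussians $\xi_{ij}$, a Gronwall estimate, then dominated convergence to obtain weak continuity, followed by a standard kernel/monotone-class argument for the measure $Q^N$ --- coincides with the paper's proof. The coupling of the weights via the explicit decomposition $J_{ij}=J(r_i,r_j)/N+\sigma(r_i,r_j)\xi_{ij}/\sqrt{N}$ is exactly the coupling used implicitly in the paper through inequality~\eqref{eq:SynWeightsRegularity}.

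There is, however, a genuine gap in the order of operations in the Gronwall step. You propose to close the recursion on the quantity $\phi^n(t)=\sum_i\Exp\bigl[\lVert X^{i,N}(\mathbf{r}^n)-X^{i,N}(\mathbf{r})\rVert_{\infty,t}^2\bigr]$, with the expectation taken \emph{jointly} over the Brownian paths and the $\xi_{ij}$, and to obtain $\phi^n(t)\le C_T N(\cdots)+C_T\int_0^t\phi^n(s)\,ds$ with a \emph{deterministic} constant $C_T$. To produce the integral term you need something like $\Exp\bigl[\bigl(\sum_j|J_{ij}^{\mathbf{r}}|\,\lVert Y^j\rVert_{\infty,s}\bigr)^2\bigr]\le C\sum_j\Exp[\lVert Y^j\rVert_{\infty,s}^2]$ where $Y^j:=X^{j,N}(\mathbf{r}^n)-X^{j,N}(\mathbf{r})$. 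This does \emph{not} follow from Cauchy--Schwarz, because the random Gronwall coefficients $|J_{ij}^{\mathbf{r}}|$ and the solution increments $Y^j$ are correlated (both depend on the $\xi$'s), so the expectation does not factor. The correct way, and the one followed in the paper, is to apply Gronwall \emph{pathwise} (i.e.\ conditionally on $J$), which produces a random prefactor $\exp\{C_T\sum_{i,j}|J_{ij}^{\mathbf{r}}|\}$, and only then to take the expectation in $J$, controlling this exponential moment by Cauchy--Schwarz and Gaussian calculus. In particular the constant obtained this way is emphatically $N$-dependent (the paper writes $C_{T,N}$ and explicitly remarks that $N$ is held fixed), and your parenthetical claim of uniformity in $N$ is not supported by the argument you sketch; fortunately the lemma only requires a fixed $N$, so that aside is harmless, but the statement ``taking $L^2$ expectations in $\xi_{ij}$ before iterating Gronwall keeps all constants uniform in $N$'' should be removed and replaced by the pathwise Gronwall followed by the $L^2(\gamma)$ control of the exponential.

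A second, smaller point: after Gronwall you must control the delay perturbation, which leads to a term of the form
\[
\int_0^t \sup_{1\le j\le N}\;\sup_{\substack{a,b\in[-\bar\tau,0]\\|a-b|\le 2K_\tau\lVert\mathbf{r}^n-\mathbf{r}\rVert_\infty}}\bigl|X^{j,N}_{\mathbf{r}}(s+a)-X^{j,N}_{\mathbf{r}}(s+b)\bigr|\,ds;
\]
sending this to zero is not a direct consequence of Lipschitz continuity but uses almost-sure path continuity of the fixed-$\mathbf{r}$ solution together with the Monotone Convergence Theorem, exactly as in the paper. You allude to this as ``the $\mathcal{C}$-modulus-of-continuity argument already used to control $d_T$'', which is the right idea, but it should be spelled out since it is the place where the hypothesis~\eqref{hyp:spaceRegInitCond} on the initial datum's spatial modulus of continuity is actually used.
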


\begin{remark}
$\mathcal{Q}$ maps the positions $\mathbf{r}$ to the Gaussian averaged of the solutions $Q^N_{\mathbf{r}}(J)$, so that its continuity seems to be a consequence of Cauchy-Lipschitz theorem with parameter $\mathbf{r}$. Yet, the equation depends on $\mathbf{r}$ through the Gaussian synaptic weights $J_{ij}$ which only satisfy a continuity in law. Meanwhile the proof is not difficult, it must rely on another argument. The one developed here is a coupling method.
\end{remark}

\begin{proof}
Before proceeding to it, remark that the Gaussian synaptic connections display some regularity in space. In fact, although the variables $J_{ij}$ and $J_{i'j'}$ are independent for $i \neq i'$ or $j \neq j'$, their probability distribution continuously depends on the spatial location of the cells, in the sense that one can find a version $\tilde{J}_{i'j'}$ of $J_{i'j'}$ such that:
\begin{equation}\label{eq:SynWeightsRegularity}
\mathcal{E}_J \Big( \big|\tilde{J}_{i'j'}- J_{ij}\big| \Big) \leq \frac{C}{N} \Big(\Vert r_i-r_{i'}\Vert_{\R^d} + \Vert r_j-r_{j'} \Vert_{\R^d} \Big).
\end{equation}
for some $C>0$ independent of the neurons locations. This is a consequence of the Lipschitz-continuity and boundedness of the mean and variance maps $J$ and $\sigma$.

We now proceed to the proof, and insist on the fact that $N$ remains constant. Fix a deterministic sequence $\Big(\mathbf{r}^n=(r^n_i)_{1 \leq i \leq N}\Big)_{n \in \N^*} \to_n \mathbf{r}=(r_i)_{1 \leq i \leq N} \in D^N$, let $(W^i_t, 0 \leq t \leq T)_{i\in [\![1, N ]\!]}$ be a family of independent $\P$-Brownian motions, and $\bar{x}^{0,i}:D \to \C_{\tau}, 1 \leq i \leq N$, be $N$ independent initial condition as in \eqref{hyp:spaceRegInitCond}. Let now $X^N_{\mathbf{r}^n}=\big(X^{i,N}_{\mathbf{r}^n}\big)_{i \in [\![1,N]\!]}$ and $X^N_{\mathbf{r}}=\big(X^{i,N}_{\mathbf{r}}\big)_{i \in [\![1,N]\!]}$ be the respective strong solutions of the two following stochastic differential equations:

\[
  \begin{cases}
      dX^{i,N}_{\mathbf{r}^n}(t)= \Big(f(r^n_i,t,X^{i,N}_{\mathbf{r}^n}(t))  + \sum_{j=1}^N \tilde{J}_{ij}^{\mathbf{r}^n} S\big(X^{j,N}_{\mathbf{r}^n}(t-\tau_{r^n_i r^n_j})\big) \Big)dt + \lambda(r^n_i) dW^i_t\\
      (X^N_{\mathbf{r}^n}(t))_{t \in [-\bar{\tau},0] } = \big( \bar{x}^{0,i}(r^n_i)\big)_{1\leq i \leq N},
  \end{cases}
\]

\[
  \begin{cases}
      dX^{i,N}_{\mathbf{r}}(t)= \Big(f(r_i,t,X^{i,N}_{\mathbf{r}}(t)) + \sum_{j=1}^N J_{ij}^{\mathbf{r}} S\big(X^{j,N}_{\mathbf{r}}(t-\tau_{r_i r_j})\big) \Big)dt + \lambda(r_i) dW^i_t\\
      (X^N_{\mathbf{r}}(t))_{t \in [-\bar{\tau},0] } = \big( \bar{x}^{0,i}(r_i)\big)_{1\leq i \leq N}.
  \end{cases} \qquad \qquad
\]
where $J^{\mathbf{r}}_{ij} \sim \mathcal{N} \Big( \frac{J(r_i,r_j)}{N}, \frac{\sigma(r_i,r_j)^2}{N} \Big)$, $\tilde{J}^{\mathbf{r}^n}_{ij} \sim J^{\mathbf{r}^n}_{ij}$ satisfy \eqref{eq:SynWeightsRegularity}, and where we used the short-hand notation $\tau_{r r'}:= \tau(r,r')$. In particular, $X^{i,N}_{\mathbf{r}^n}$ has law $Q^N_{\mathbf{r}^n}(J^{\mathbf{r}^n})$, and $X^{i,N}_{\mathbf{r}}$ has law $Q^N_{\mathbf{r}}(J^{\mathbf{r}})$.

Then, we have for every $t \in [0,T]$,
\begin{align*}
\big(X^{i,N}_{\mathbf{r}^n}(t)-X^{i,N}_{\mathbf{r}}(t)\big) & =  \big(\bar{x}^{0,i}_0(r^n_i)-\bar{x}^{0,i}_0(r_i) \big) +\Bigg( \int_0^t \Big(f(r^n_i,s,X^{i,N}_{\mathbf{r}^n}(s)) - f(r_i,s,X^{i,N}_{\mathbf{r}}(s))\Big)ds \\
& + \sum_{j=1}^N \Bigg\{ \big(\tilde{J}_{ij}^{\mathbf{r}^n} - J_{ij}^{\mathbf{r}}\big) \int_0^t S\big(X^{j,N}_{\mathbf{r}^n}(s-\tau_{r^n_i r^n_j})\big) ds \\
& + J_{ij}^{\mathbf{r}} \int_0^t \Big( S\big(X^{j,N}_{\mathbf{r}^n}(s-\tau_{r^n_i r^n_j})\big) - S\big( X^{j,N}_{\mathbf{r}}(s-\tau_{r_i r_j})  \big) \Big) ds \Bigg\} \Bigg) + \big(\lambda(r^n_i)-\lambda(r_i)\big) W^i_t.
\end{align*}
Let $W^{*,i}_T= \underset{t\in [0,T]}{\sup} |W^i_t|$. Then using Lipschitz continuity of $f$, $\lambda$, $S$, the fact that $|S| \leq 1$, and taking the supremum in time one obtains
\begin{align*}
& \big\Vert X^{i,N}_{\mathbf{r}^n}-X^{i,N}_{\mathbf{r}}\big\Vert_{\infty,t} \leq \Vert \bar{x}^{0,i}(r^n_i)-\bar{x}^{0,i}(r_i) \Vert_{\tau,\infty} +  \Big(  K_f t + K_{\lambda}  W^{*,i}_T \Big) \Vert r^n_i-r_i \Vert_{\R^d}  \\
& + \int_0^t \bigg\{ K_f \big\Vert X^{i,N}_{\mathbf{r}^n}-X^{i,N}_{\mathbf{r}}\big\Vert_{\infty,s} + K_S \sum_{j=1}^N  |J_{ij}^{\mathbf{r}}| \big\Vert X^{j,N}_{\mathbf{r}^n}-X^{j,N}_{\mathbf{r}}\big\Vert_{\infty,s} \bigg\} ds  + \sum_{j=1}^N \Bigg\{ t \big|\tilde{J}_{ij}^{\mathbf{r}^n} - J_{ij}^{\mathbf{r}}\big|\\
& + K_S  \int_0^t  |J_{ij}^{\mathbf{r}}| \underset{|a-b| \leq 2 K_{\tau} \Vert \mathbf{r}^n-\mathbf{r} \Vert_{\infty}}{\underset{a,b \in [-\bar{\tau},0]}{\sup}} \Big| X^{j,N}_{\mathbf{r}}(s+a) -  X^{j,N}_{\mathbf{r}}(s+b) \Big| ds \Bigg\}.
\end{align*}

where $\lVert \mathbf{r}\rVert_{\infty} = \sup_{1 \leq i \leq N} \Vert r_i \Vert_{\R^d}$. Let us denote, for any $X= \big( X^i \big)_{1 \leq i \leq N} \in \C^N$, $t \in [-\bar{\tau},T]$, $\lVert X \rVert^{1}_{\infty,t}= \sum_{1 \leq i \leq N} \Vert X^i \Vert_{\infty,t}$, and for any $\mathbf{r} \in D^N$, $\lVert \mathbf{r}\rVert_{1} = \sum_{1 \leq i \leq N} \Vert r_i \Vert_{\R^d}$. Summing over $i \in [\![1,N]\!]$ and using Gronwall's inequality now yields

\begin{align}
& \big\Vert X^{N}_{\mathbf{r}^n}-X^{N}_{\mathbf{r}}\big\Vert_{\infty,t}^1 \leq C_T \exp\Big\{ C_T \sum_{i,j=1}^N |J_{ij}^{\mathbf{r}}|\Big\} \Bigg( \Vert \bar{\mathbf{x}}^{0}(\mathbf{r}^n)-\bar{\mathbf{x}}^{0}(\mathbf{r}) \Vert_{\tau,\infty}^1 +  \big(\sup_{1\leq i \leq N}  W^{*,i}_T \big) \Vert \mathbf{r}^n-\mathbf{r} \Vert_1 \nonumber \\
& + \sum_{i,j=1}^N t \big|\tilde{J}_{ij}^{\mathbf{r}^n} - J_{ij}^{\mathbf{r}} \big| + \Big(\sum_{i,j=1}^N |J_{ij}^{\mathbf{r}}|\Big) \int_0^t \sup_{1 \leq j \leq N} \bigg\{ \underset{|a-b| \leq 2 K_{\tau} \Vert \mathbf{r}^n-\mathbf{r} \Vert_{\infty}}{\underset{a,b \in [-\bar{\tau},0]}{\sup}} \Big| X^{j,N}_{\mathbf{r}}(s+a) -  X^{j,N}_{\mathbf{r}}(s+b) \Big|  \bigg\} ds \Bigg) \label{ineq:ContinuitySpaceNetSolution1}.
\end{align}

The synaptic connections being Gaussian, observe that there exists a map $\chi:D^N \times D^N \to \R$ such that $\chi(\mathbf{r}^n, \mathbf{r}) \to 0$, when $n \to \infty$, and a constant $C_{T,N}$ such that:

\begin{align*}
&  \E_J \bigg[ \exp\Big\{ 2 C_T \sum_{i,j=1}^N |J_{ij}^{\mathbf{r}}|\Big\} \bigg]^{\frac{1}{2}} + \Exp \Big[  \big(\sup_{1\leq i \leq N}  W^{*,i}_T \big)^2 \Big]+ \E_{J} \bigg[\Big(\sum_{i,j=1}^N |J_{ij}^{\mathbf{r}}|\Big)^2 \bigg] \leq  C_{T,N} 
\end{align*}
\begin{align*} 
\Exp \bigg[ \Big(\Vert \bar{\mathbf{x}}^{0}(\mathbf{r}^n)-\bar{\mathbf{x}}^{0}(\mathbf{r}) \Vert_{\tau,\infty}^1\Big)^2 \bigg]  + \big(\Vert \mathbf{r}^n-\mathbf{r} \Vert_1\big)^2 + \sum_{i,j=1}^N  t^2 \E_J \Big[ \big(\tilde{J}_{ij}^{\mathbf{r}^n} - J_{ij}^{\mathbf{r}} \big)^2 \Big] \overset{\eqref{hyp:spaceRegInitCond}, \eqref{eq:SynWeightsRegularity}}{\leq} C_{T,N} \chi(\mathbf{r}^n, \mathbf{r}).
\end{align*}

Denoting $\Exp_J \big[ \cdot \big] := \Exp \big[ \E_J \big[ \cdot \big] \big]$, we find, taking the expectation in \eqref{ineq:ContinuitySpaceNetSolution1} and relying on Cauchy-Schwarz's inequality:
\begin{align*}
& \Exp_J \Big[ \Big[ \big\Vert X^{N}_{\mathbf{r}^n}-X^{N}_{\mathbf{r}}\big\Vert_{\infty,t}^1 \Big] \leq \tilde{C}_{T,N} \bigg( \chi(\mathbf{r}^n, \mathbf{r}) + \Exp_J \Bigg[  \sup_{1 \leq j \leq N} \underset{|a-b| \leq 2 K_{\tau} \Vert \mathbf{r}^n-\mathbf{r} \Vert_{\infty}}{\underset{a,b \in [-\bar{\tau},0],s \in [0,t]}{\sup}} \Big| X^{j,N}_{\mathbf{r}}(s+a) -  X^{j,N}_{\mathbf{r}}(s+b) \Big|^2 \Bigg] \Bigg)^{\frac{1}{2}}.
\end{align*}

As solution are $\P$-almost surely continuous, and $N$ remains (here) finite, the Monotone Convergence Theorem ensures that the right-hand side tends toward $0$ when $n$ goes to infinity. It implies in particular that $Q^N_{\mathbf{r}^n}$ converges in law toward $Q^N_{\mathbf{r}}$ when $n \to \infty$, so that the map $\mathbf{r} \to \mathcal{E}_J \Big[ \int_{\C^N} \phi(\mathbf{x})dQ^N_{\mathbf{r}}(\mathbf{x}) \Big]$ is continuous and integrable with respect to $\pi^{\otimes N}$. In particular, $dQ^N(\mathbf{x},\mathbf{r}):=dQ^N_{\mathbf{r}}(\mathbf{x}) d\pi^{\otimes N}(\mathbf{r})$ defines a probability measure on $(\C \times D)^N$.

\end{proof}


\begin{thebibliography}{10}

\bibitem{amari:72}
S.~Amari.
\newblock Characteristics of random nets of analog neuron-like elements.
\newblock {\em Syst. Man Cybernet. SMC-2}, 1972.

\bibitem{aradi-soltesz:02}
I.~Aradi and I.~Soltesz.
\newblock Modulation of network behaviour by changes in variance in
  interneuronal properties.
\newblock {\em The Journal of physiology}, 538(1):227, 2002.

\bibitem{ben-arous-dembo-guionnet:01}
G~Ben Arous, Amir Dembo, and Alice Guionnet.
\newblock Aging of spherical spin glasses.
\newblock {\em Probability theory and related fields}, 120(1):1--67, 2001.

\bibitem{ben-arous-guionnet:98}
Gerard~Ben Arous and Alice Guionnet.
\newblock Langevin dynamics for sherrington-kirkpatrick spin glasses.
\newblock In {\em Mathematical Aspects of Spin Glasses and Neural Networks},
  pages 323--353. Springer, 1998.

\bibitem{ben-arous-guionnet:95}
G.~Ben-Arous and A.~Guionnet.
\newblock Large deviations for langevin spin glass dynamics.
\newblock {\em Probability Theory and Related Fields}, 102(4):455--509, 1995.

\bibitem{bressloff:09}
P.C. Bressloff.
\newblock Stochastic neural field theory and the system-size expansion.
\newblock {\em SIAM J. on Applied Mathematics}, 70:1488--1521, 2009.

\bibitem{bressloff:12}
P.C. Bressloff.
\newblock Spatiotemporal dynamics of continuum neural fields.
\newblock {\em Journal of Physics A: Mathematical and Theoretical}, 45:033001,
  2012.

\bibitem{bressloff-cowan-etal:01}
P.C. Bressloff, J.D. Cowan, M.~Golubitsky, P.J. Thomas, and M.C. Wiener.
\newblock Geometric visual hallucinations, euclidean symmetry and the
  functional architecture of striate cortex.
\newblock {\em Phil. Trans. R. Soc. Lond. B}, 306(1407):299--330, March 2001.

\bibitem{buice-cowan:07}
MA~Buice and JD~Cowan.
\newblock Field-theoretic approach to fluctuation effects in neural networks.
\newblock {\em Physical Review E}, 75(5), 2007.

\bibitem{buice-cowan:10}
MA~Buice, JD~Cowan, and CC~Chow.
\newblock Systematic fluctuation expansion for neural network activity
  equations.
\newblock {\em Neural computation}, 22(2):377--426, 2010.

\bibitem{buice2013beyond}
Michael~A Buice and Carson~C Chow.
\newblock Beyond mean field theory: statistical field theory for neural
  networks.
\newblock {\em Journal of Statistical Mechanics: Theory and Experiment},
  2013(03):P03003, 2013.

\bibitem{buice2013dynamic}
Michael~A Buice and Carson~C Chow.
\newblock Dynamic finite size effects in spiking neural networks.
\newblock {\em PLoS Comput. Biol}, 9:e1002872, 2013.

\bibitem{cabana-touboul:12}
Tanguy Cabana and Jonathan Touboul.
\newblock Large deviations and dynamics of large stochastic heterogeneous
  neural networks.
\newblock {\em Journal of Statistical Physics}, Volume 153, Issue 2, pp 211-269, 2013.

\bibitem{cabana-touboul:16}
Tanguy Cabana and Jonathan Touboul.
\newblock Large deviations for randomly connected neural networks: II. State-dependent interactions.
\newblock {\em (submitted)}, 2016.

\bibitem{cai-tao:04}
D~Cai, L~Tao, M~Shelley, and DW~McLaughlin.
\newblock An effective kinetic representation of fluctuation-driven neuronal
  networks with application to simple and complex cells in visual cortex.
\newblock {\em Proceedings of the National Academy of Sciences},
  101(20):7757--7762, 2004.

\bibitem{cessac-samuelides:07}
Bruno Cessac and Manuel Samuelides.
\newblock From neuron to neural networks dynamics.
\newblock {\em EPJ Special topics: Topics in Dynamical Neural Networks},
  142(1):7--88, 2007.

\bibitem{da-prato:92}
G~Da~Prato and J~Zabczyk.
\newblock {\em Stochastic equations in infinite dimensions}.
\newblock Cambridge Univ Pr, 1992.

\bibitem{dai1996mckean}
Paolo Dai~Pra and Frank den Hollander.
\newblock Mckean-vlasov limit for interacting random processes in random media.
\newblock {\em Journal of statistical physics}, 84(3-4):735--772, 1996.

\bibitem{daido:92}
Hiroaki Daido.
\newblock Quasientrainment and slow relaxation in a population of oscillators
  with random and frustrated interactions.
\newblock {\em Physical review letters}, 68(7):1073, 1992.

\bibitem{daido:00}
Hiroaki Daido.
\newblock Algebraic relaxation of an order parameter in randomly coupled
  limit-cycle oscillators.
\newblock {\em Physical Review E}, 61(2):2145, 2000.

\bibitem{dauce-moynot-etal:01}
E.~Dauce, O.~Moynot, O.~Pinaud, and M.~Samuelides.
\newblock Mean-field theory and synchronization in random recurrent neural
  networks.
\newblock {\em Neural Processing Letters}, 14(2):115--126, 2001.

\bibitem{dembo-zeitouni:09}
A.~Dembo and O.~Zeitouni.
\newblock {\em Large deviations techniques and applications}, volume~38.
\newblock Springer Verlag, 2009.


\bibitem{den2008large}
Frank Den~Hollander.
\newblock {\em Large deviations}, volume~14.
\newblock American Mathematical Soc., 2008.

\bibitem{deuschel-stroock:89}
J.D. Deuschel and D.W. Stroock.
\newblock {\em Large deviations}, volume 137.
\newblock Academic Pr, 1989.

\bibitem{ermentrout:79}
G~Bard Ermentrout and Jack~D Cowan.
\newblock A mathematical theory of visual hallucination patterns.
\newblock {\em Biological cybernetics}, 34(3):137--150, 1979.

\bibitem{faugeras-touboul-etal:09}
O.~Faugeras, J.~Touboul, and B.~Cessac.
\newblock A constructive mean field analysis of multi population neural
  networks with random synaptic weights and stochastic inputs.
\newblock {\em Frontiers in Neuroscience}, 3(1), 2009.

\bibitem{faugeras2014asymptotic}
Olivier Faugeras and James Maclaurin.
\newblock Asymptotic description of stochastic neural networks. i. existence of
  a large deviation principleastic neural networks. ii. characterization of the
  limit law.
\newblock {\em Comptes Rendus Mathematique}, 352(10):847--852, 2014.

\bibitem{funahashi:89}
Shintaro Funahashi, Charles~J Bruce, and Patricia~S Goldman-Rakic.
\newblock Mnemonic coding of visual space in the monkey's dorsolateral
  prefrontal cortex.
\newblock {\em Journal of neurophysiology}, 61(2):331--349, 1989.

\bibitem{guionnet:97}
A.~Guionnet.
\newblock {Averaged and quenched propagation of chaos for spin glass dynamics}.
\newblock {\em Probability Theory and Related Fields}, 109(2):183--215, 1997.

\bibitem{hermann2012heterogeneous}
Geoffroy Hermann and Jonathan Touboul.
\newblock Heterogeneous connections induce oscillations in large-scale
  networks.
\newblock {\em Physical review letters}, 109(1):018702, 2012.

\bibitem{jancke:04}
Dirk Jancke, Fr{\'e}d{\'e}ric Chavane, Shmuel Naaman, and Amiram Grinvald.
\newblock Imaging cortical correlates of illusion in early visual cortex.
\newblock {\em Nature}, 428(6981):423--426, 2004.

\bibitem{kilpatrick:13}
Zachary~P Kilpatrick.
\newblock Interareal coupling reduces encoding variability in multi-area models
  of spatial working memory.
\newblock {\em Frontiers in computational neuroscience}, 7, 2013.

\bibitem{lucon2014mean}
Eric Lu{\c{c}}on, Wilhelm Stannat, et~al.
\newblock Mean field limit for disordered diffusions with singular
  interactions.
\newblock {\em The Annals of Applied Probability}, 24(5):1946--1993, 2014.

\bibitem{ly-tranchina:07}
Cheng Ly and Daniel Tranchina.
\newblock Critical analysis of dimension reduction by a moment closure method
  in a population density approach to neural network modeling.
\newblock {\em Neural Computation}, 19(8):2032--2092, Aug 2007.

\bibitem{mao:08}
Xuerong Mao.
\newblock {\em Stochastic Differential Equations and Applications}.
\newblock Horwood publishing, 2008.

\bibitem{meleard1996asymptotic}
Sylvie M{\'e}l{\'e}ard.
\newblock Asymptotic behaviour of some interacting particle systems;
  mckean-vlasov and boltzmann models.
\newblock In {\em Probabilistic models for nonlinear partial differential
  equations}, pages 42--95. Springer, 1996.

\bibitem{moynot-samuelides:02}
O.~Moynot and M.~Samuelides.
\newblock {Large deviations and mean-field theory for asymmetric random
  recurrent neural networks}.
\newblock {\em Probability Theory and Related Fields}, 123(1):41--75, 2002.

\bibitem{muller:14}
Lyle Muller, Alexandre Reynaud, Fr{\'e}d{\'e}ric Chavane, and Alain Destexhe.
\newblock The stimulus-evoked population response in visual cortex of awake
  monkey is a propagating wave.
\newblock {\em Nature communications}, 5, 2014.

\bibitem{neveu:70}
Jacques Neveu. 
\newblock Processus aleatoires gaussiens.
\newblock Presses de l'Universit\'e de Montr\'eal, 1970.

\bibitem{rangan-cai:07}
Aaditya~V Rangan, David Cai, and Louis Tao.
\newblock Numerical methods for solving moment equations in kinetic theory of
  neuronal network dynamics.
\newblock {\em Journal of Computational Physics}, 221(2):781--798, 2007.

\bibitem{rangan-kovacic:08}
AV~Rangan, G~Kova~i, and D~Cai.
\newblock Kinetic theory for neuronal networks with fast and slow excitatory
  conductances driven by the same spike train.
\newblock {\em Physical Review E}, 77(4), 2008.

\bibitem{revuz-yor:99}
D~Revuz and M~Yor.
\newblock {\em Continuous martingales and Brownian motion}.
\newblock Springer Verlag, 1999.

\bibitem{sherrington-kirkpatrick:75}
D.~Sherrington and S.~Kirkpatrick.
\newblock Solvable model of a spin-glass.
\newblock {\em Physical review letters}, 35(26):1792--1796 

\bibitem{sompolinsky-crisanti-etal:88}
H.~Sompolinsky, A.~Crisanti, and HJ~Sommers.
\newblock {Chaos in Random Neural Networks}.
\newblock {\em Physical Review Letters}, 61(3):259--262, 1988.

\bibitem{stiller-radon:98}
JC~Stiller and G~Radons.
\newblock Dynamics of nonlinear oscillators with random interactions.
\newblock {\em Physical Review E}, 58(2):1789, 1998.

\bibitem{stiller-radon:00}
JC~Stiller and G~Radons.
\newblock Self-averaging of an order parameter in randomly coupled limit-cycle
  oscillators.
\newblock {\em Physical Review E}, 61(2):2148, 2000.

\bibitem{sznitman:84}
AS~Sznitman.
\newblock Equations de type de boltzmann, spatialement homogenes.
\newblock {\em Probability Theory and Related Fields}, 66(4):559--592, 1984.

\bibitem{sznitman:89}
AS~Sznitman.
\newblock Topics in propagation of chaos.
\newblock {\em Ecole d'Et{\'e} de Probabilit{\'e}s de Saint-Flour XIX}, pages
  165--251, 1989.

\bibitem{talagrand:10}
Michel Talagrand.
\newblock {\em Mean field models for spin glasses: Volume I: Basic examples},
  volume~54.
\newblock Springer Science \& Business Media, 2010.

\bibitem{talagrand:10b}
Michel Talagrand.
\newblock {\em Mean field models for spin glasses: Volume II: Advanced Replica
  Symmetry and Low Temperature}, volume~54.
\newblock Springer Science \& Business Media, 2010.

\bibitem{touboulNeuralFieldsDynamics:11}
Jonathan Touboul.
\newblock On the dynamics of mean-field equations for stochastic neural fields
  with delays.
\newblock {\em Physica D: Nonlinear Phenomena}, 241(15):1223--1244, 2012.

\bibitem{touboul2014spatially}
Jonathan Touboul.
\newblock Spatially extended networks with singular multi-scale connectivity
  patterns.
\newblock {\em Journal of Statistical Physics}, 156(3):546--573, 2014.

\bibitem{touboul2014propagation}
Jonathan Touboul et~al.
\newblock Propagation of chaos in neural fields.
\newblock {\em The Annals of Applied Probability}, 24(3):1298--1328, 2014.

\bibitem{wilson-cowan:72}
H.R. Wilson and J.D. Cowan.
\newblock Excitatory and inhibitory interactions in localized populations of
  model neurons.
\newblock {\em Biophys. J.}, 12:1--24, 1972.

\bibitem{wilson-cowan:73}
H.R. Wilson and J.D. Cowan.
\newblock A mathematical theory of the functional dynamics of cortical and
  thalamic nervous tissue.
\newblock {\em Biological Cybernetics}, 13(2):55--80, September 1973.

\end{thebibliography}
\end{document}